\documentclass[
    a4paper,
    10pt,
]{article}

\usepackage{custom-style}
\usepackage{math}
\usepackage[plain]{theoremtemplate-style}
\usepackage[margin=2cm]{geometry}
\setuptodonotes{fancyline, color=blue!30}

\usepackage[style=alphabetic,backend=biber,maxbibnames=6]{biblatex}
\title{$L_\infty$-morphisms between twisted Courant $r$-Lie algebras and untwisted Courant $(r{+}1)$-Lie algebroids} 
\author{Domenico Fiorenza, Antonio Michele Miti}

\begin{document}
%#########################################################################%
\maketitle

%=================================================#
\begin{abstract}
  In {\it Lie infinity algebras and higher analogues of Dirac structures and Courant algebroids'} [Journal of Symplectic Geometry, Vol. 10, no. 4 (2012), pp. 563-599], Marco Zambon constructs an $L_\infty$-algebra associated with any higher standard or twisted Courant algebroid (also known as a Vinogradov algebroid), and exhibits an explicit $L_\infty$-morphism from the Lie algebra associated with a standard Lie algebroid twisted by a closed 2-form to the Lie-2 algebra of the standard Courant algebroid. He poses the question of whether analogous canonical $L_\infty$-morphisms exist in higher degrees---namely, for any standard higher Courant algebroid twisted by a closed $(r+1)$-form. We adfirmatively answer this question, presenting 
a general framework that naturally yields such canonical $L_\infty$-morphisms for arbitrary $r$, while at the same time clarifying the geometrical and homotopical structures underlying the construction. We also show how this framework accommodates the canonical morphism between Roger's observable $L_\infty$-algebra of a pre-$r$-plectic manifold and the higher Courant algebra described by Zambon and one of the authors in {\it Observables on multisymplectic manifolds and higher Courant algebroids} [Communications in Contemporary Mathematics Vol. 27, No. 08, 2550006 (2025)].

\end{abstract}
%=================================================#

\tableofcontents

%=================================================#
\section{Introduction}
\label{sec:intro}
%=================================================#
%

In the late eighties, Courant~\cite{Courant1990}, Dorfman~\cite{Dorfman1987}, and Weinstein introduced the notion of \emph{Courant algebroid} as a unifying framework encompassing both symplectic and Poisson geometry through the language of Dirac structures.  
A decade later, Roytenberg and Weinstein~\cite{Roytenberg1998} showed that every Courant algebroid gives rise to a two-term $L_\infty$-algebra, thus revealing the homotopical nature underlying these structures.

In the early 2010s, building on \cite[\S 3.8]{Gualtieri2004}, Zambon~\cite{Zambon2012} proposed a higher analogue of Courant algebroids, together with generalized Dirac structures and their associated $L_\infty$-algebras.  
In analogy with the original motivation of Courant and Dorfman, these higher Dirac structures provide a natural framework for describing $k$-plectic structures (or multisymlectic in the sense of ~\cite{Cantrijn1999}) on smooth manifolds.  
Around the same period, \v{S}evera and Weinstein~\cite{Severa2001} introduced the notion of a \emph{twisted} Courant bracket, obtained by deforming the Courant bracket by a closed three-form.  
Zambon's construction also generalizes this idea, showing that any closed $(k+1)$-form can be used to twist a higher Courant algebroid together with its corresponding $L_\infty$-algebra.

In the same way as one associates a Poisson algebra to any symplectic manifold, Rogers~\cite{Rogers2012} introduced, for every closed non-degenerate $(k+1)$-form $\sigma$, an $L_\infty$-algebra describing the \emph{multisymplectic observables} associated to the $k$-plectic manifold $(M,\sigma)$ (see also~\cite{Callies2016} for the degenerate case).  
Since a closed form can be viewed as a Dirac structure within a suitable higher Courant algebroid, it is natural to compare the $L_\infty$-algebras arising from these two contexts.  
In the $2$-plectic case ($\sigma \in \Omega^3(M)$), Rogers~\cite{Rogers2013} showed that the $L_2$-algebra of observables embeds into the $L_2$-algebra associated to the $\sigma$-twisted Courant algebroid.  
On the other hand, Zambon~\cite[Thm. 7.1]{Zambon2012} constructed a natural morphism from the Lie algebra of a twisted (with respect to a closed $2$-form) Lie algebroid to the $L_2$-algebra associated with a standard untwisted Courant algebroid.
A higher analogue of the first embedding was conjectured by Sämann and Ritter~\cite{Ritter2015a} and recently constructed explicitly by Miti and Zambon~\cite{Miti2024}.  
The higher version of the second morphism was also conjectured in~\cite{Zambon2012}, but to our knowledge, no proof has been available so far.

The explicit construction in~\cite{Miti2024} provides a direct higher-dimensional generalization of Rogers' morphism, but a general conceptual framework explaining its origin has been missing.  
Furthermore, the higher analogue of Zambon's morphism from the twisted Courant algebra to the untwisted Lie algebroid has remained conjectural.  
In this paper, we address these open problems by introducing in this context the technology of \emph{homotopy limits} within the model category of differential graded Lie algebras (DGLA) and, equivalently, of $L_\infty$-algebras.  
Our goal is to provide a conceptual framework of the construction in~\cite{Miti2024} and, in doing so, to establish Zambon's conjectured morphism between the twisted and untwisted Courant algebras, thereby addressing the questions raised in~\cite[\S9]{Zambon2012}.

\medskip
To describe the setting, let $\sigma \in \Omega^{r+1}(M)$ be a closed $(r{+}1)$-form.  
We consider the following diagram of $L_\infty$-algebras:
\[
\begin{tikzcd}
    \Rogers[\sigma]{r-1} \ar[r] & \Courant[\sigma]{r-1} \ar[r] & \Courant[]{r}.
\end{tikzcd}
\]
Here $\Rogers[\sigma]{r-1}$ denotes Rogers’ $L_\infty$-algebra of multisymplectic observables~\cite{Rogers2012}, while $\Courant[\sigma]{r-1}$ and $\Courant[]{r}$ denote the higher and twisted Courant algebras introduced by Zambon~\cite{Zambon2012}.  
The first arrow coincides with the morphism of~\cite{Rogers2013} for $r=1$ and with its higher-dimensional generalization constructed in~\cite{Miti2024} for $r>1$;  
the second arrow corresponds to the morphism described in~\cite{Zambon2012} for $r=1$ and conjectured there for $r>1$.

The key observation of the present work is that all three objects in the above diagram can be realized as \emph{homotopy fibers} of DGLA or, more generally, $L_\infty$-algebra morphisms.  
This fact, already established for $\Rogers[\sigma]{r-1}$ in~\cite{Fiorenza2014}, extends to the Courant case via Zambon's construction, which relies on Getzler's model~\cite{Getzler2010}. 
The latter can be seen as a specialization of the cone construction of Fiorenza–Manetti~\cite{Fiorenza2007}, see also~\cite{Bandiera2015}.  
    This observation allows us to bypass the explicit combinatorial complexity of the higher multibrackets in the higher Courant $L_\infty$-algebras: their structure can instead be deduced from the inclusion morphism $\gLarge_{r,\,\sigma}^{\geq 0}\hookrightarrow \gLarge_{r,\,\sigma}$ of the nonnegative degree part $\gLarge_{r,\,\sigma}^{\geq 0}$ of a suitable DGLA $\gLarge_{r,\,\sigma}$. The $L_\infty$-algebra morphism leading to Rogers' algebra was already identified in~\cite{Fiorenza2014}.  
    
We simplify the DGLA embedding appearing in Zambon's construction by introducing a \emph{Cartan DGLA}, which encodes the Cartan calculus on the underlying manifold and plays a central role in our homotopical interpretation.
Having shifted our attention to morphisms of DGLA's, it becomes natural to ask whether the corresponding morphisms between $L_\infty$-algebras can be obtained from (homotopy) commutative squares of DGLA's or, more generally, of $L_\infty$-algebras.  
%Using the equivalence between the homotopy category of DGLAs and that of $L_\infty$-algebras, 
We show that this is indeed the case.
%The morphism introduced by Rogers in the 2-plectic case~\cite{Rogers2013}, conjectured in~\cite{Ritter2015a}, and constructed in full generality in~\cite{Miti2024} is a morphism between homotopy fibers, naturally induced by such a commutative square.

\medskip
Our main results can be summarized as follows.
\\
Denoting by $\Cartan$ the sub-DGLA of derivations of the de Rham algebra $\Omega(M)$ of $M$ generated by contractions with vector fields, with differential given by $[\ddR,-]$, we consider the semidirect product  $\gSmall_{r} \coloneqq \Cartan \ltimes \Omega[r]$. The element $\sigma\in \Omega^{r+1}_{\mathrm{cl}}(M)$ is a Maurer-Cartan element in $\gSmall_{r}$, hence one can define the $\gSmall_{r,\sigma}$ as the $\sigma$-twisted DGLA obtained from $\gSmall_{r}$.
This DGLA $\gSmall_{r,\sigma}$ is seen to be a sub-DGLA of the DGLA $\gLarge_{r,\sigma}$ appearing in Zambon's construction and we prove:
% In \autoref{def:CartanAlgebra} we introduce the $2$-term DGLA $\Cartan$ consisting of all possible contraction operators $\iota_X$ and Lie derivative operators $\Lie_X$ for any vector field $X$ on $M$.
% The action of $\Cartan$ on the dg-space $\Omega(M)$ encodes the Cartan calculus on the manifold $M$.
% Let $r\geq 1$ be a fixed integer and consider the DGLA $\gSmall_{r} \coloneqq \Cartan \ltimes \Omega[r]$.
% Any closed $r{+}1$-form $\sigma \in \Omega^{r+1}_{\mathrm{cl}}(M)$ defines a Maurer-Cartan element in $\gSmall_{r}$, hence one can define the $\gSmall_{r,\sigma}$ as the $\sigma$-twisted DGLA. 
% The higher Courant algebra $\Courant[\sigma]{r{-}1}$ is the homotopy fiber associated  with the inclusion of $\gSmall_{r,\sigma}^{\ge 0}$ into $\gSmall_{r,\sigma}$.

    \begin{bigthm}[Smaller model for \texorpdfstring{$\Courant[\sigma]{r}$}\,\! ]
        \label{bigthm:smaller-model}
        The higher Courant algebra $\Courant[\sigma]{r{-}1}$ is a model for the homotopy fiber associated with the inclusion of DGLAs 
        \begin{displaymath}
            \gSmall_{r,\sigma}^{\ge 0} \hookrightarrow \gSmall_{r,\sigma}~,
        \end{displaymath}
        where $\gSmall_{r,\sigma}^{\ge 0}$ is the truncation $\gSmall_{r,\sigma}$ in non-negative degrees.
    \end{bigthm}
    In \autoref{lem:Linfty-morphism-smallgr-to-smallgr+1} we exhibit a $2$-term $L_\infty$-morphims $\Phi$, i.e., an $ L_\infty$-morphisms $\Phi$ comprising only a linear and quadratic component, between the DGLA $\gSmall_{r,\sigma}$ and the DGLA $\gSmall_{r{+}1}$.
    Such a morphism can be $\sigma$-twisted to a $L_\infty$-morphism $\Phi_\sigma$ between $\gSmall_{r,\sigma}$ and $\gSmall_{r{+}1}$ which is compatible with  truncations to non-negative degrees.
    This leads to the existence of the natural $L_\infty$-morphism %from the twisted to the untwisted Courant algebra for any $r\geq 1$, as 
    conjectured in~\cite{Zambon2012}.
    %, and it is realized as the universal morphism between homotopy fibers.
%
\begin{bigthm}[Morphism from twisted to untwisted Courant $L_\infty$-algebras]
    \label{bigthm:morphism-twistedCourant-untwistedCourant}
    There exists a distinguished $L_\infty$-morphism between the twisted and untwisted higher Courant $L_\infty$-algebras
    \begin{displaymath}
        \Courant[\sigma]{\,r-1}\ \longrightarrow\ \Courant{r}~,
    \end{displaymath}
    induced by the commutative square  of $L_\infty$-morphisms
    \begin{displaymath}
        \begin{tikzcd}[column sep =large]
            \gSmall_{r,\sigma}^{\ge 0} \ar[r,hook]\ar[d,"\Phi_\sigma|_{\ge 0}"'] & \gSmall_{r,\sigma} \ar[d,"\Phi_\sigma"] 
            \\
            \gSmall_{r+1}^{\ge 0} \ar[r,hook] & \gSmall_{r+1}.
        \end{tikzcd}
    \end{displaymath}
    via the universal property of homotopy fibers.
\end{bigthm}
    Finally, we notice that the $L_\infty$-algebra of multisymplectic observables $\Rogers[\sigma]{r-1}$ has been realized in \cite{Fiorenza2007} as the homotopy fiber associated with the multicontraction morphism $\iota_\infty \sigma$ of Hamiltonian vector fields $\X_{\ham,\sigma}$ with the given closed form $\sigma$.
    This leads to the $L_\infty$-embedding constructed in~\cite{Miti2024}.
    %Therefore, we prove that the $L_\infty$-embedding of the Rogers algebra into the twisted Courant $L_\infty$-algebra constructed in~\cite{Miti2024} arises functorially from a homotopy-commutative square of DGLAs, and hence admits a conceptual explanation in terms of the universal property of homotopy fibers.
%
\begin{bigthm}[Morphism from multisymplectic observables to twisted Courant $L_\infty$-algebras]
    \label{bigthm:morphism-Rogers-Courant}
    There exists a distinguished $L_\infty$-morphism from the $L_\infty$-algebra of multisymplectic observables to the twisted higher Courant $L_\infty$-algebra
    \begin{displaymath}
        \Rogers[\sigma]{r-1}\ \longrightarrow\ \Courant[\sigma]{r-1}~,
    \end{displaymath}
    induced by the homotopically commutative square of DGLAs inside the category of $L_\infty$-algebras 
    \begin{displaymath}
       \begin{tikzcd}[column sep =large]
             \X_{\ham,\sigma} \ar[r,"\iota_{\infty}\sigma"] \ar[d,"\Lie"'] &
            (\Omega^0\to\cdots\to \Omega^{r-1}\to \d\Omega^{r-1}) \ar[d,hook]
            \\
            \g_{r,\sigma}^{\ge 0} \ar[r,hook] \ar[ur,Rightarrow] &
            \g_{r,\sigma}~.
        \end{tikzcd}
    \end{displaymath}
    
\end{bigthm}

\medskip
The paper is organized as follows.  
Section~2 recalls the background on the $L_\infty$-algebras associated to multisymplectic forms, Courant algebroids, and their twisted versions.  
Section~3 describes the construction of a natural morphism between twisted and untwisted Courant algebras.  In particular, \autoref{bigthm:smaller-model} is proven as Proposition \ref{prop:smallg-and-largeg-same-fiber}, and \autoref{bigthm:morphism-twistedCourant-untwistedCourant} is proven as the concluding statement of subsection \ref{ssec:theoremB}.
Section~4 develops the homotopical framework that justifies the construction of the Rogers-to-Courant $L_\infty$-morphism.  
Section~5 contains conclusions and outlooks.

%+------------------------------------------------+
\subsection*{Notations and conventions}
%+------------------------------------------------+

We will tacitly assume all graded vector spaces to be $\mathbb{Z}$-graded, that is, that they are objects in the category $\Vect^{\mathbb{Z}}$. Given two graded vector spaces $V$ and $W$, we denote by $\underline{\Hom}(V, W)$ the graded vector space of homogeneous linear maps from $V$ to $W$, is defined by
\begin{equation*}
    \underline{\Hom}(V, W)^k := \Hom_{\Vect^{\mathbb{Z}}}(V, W[k])~,
\end{equation*}
where $W[k]$ is the $k$-th suspension of $W$, defined by $W[k]^n := W^{n+k}$ for all $n \in \mathbb{Z}$.
\\
In the case $W = V$, we write $\underline{\Hom}(V) := \underline{\Hom}(V, V)$ and refer to it as the graded associative algebra of endomorphisms of $V$, with composition of linear maps as the associative product denoted by $\circ$.

If $V$ and $W$ are graded associative algebras, we denote by $\Der(V, W) \subset \underline{\Hom}(V, W)$ the graded Lie algebra of derivations from $V$ to $W$, that is, graded linear maps $D: V \to W$ satisfying
\begin{equation*}
    D(a \cdot b) = D(a) \cdot b + (-1)^{|a|} a \cdot D(b)~,
\end{equation*}
for all homogeneous elements $a, b \in V$, and equipped with the commutator of linear operators as Lie bracket.

In particular, if $V$ is a differential graded associative algebra with differential $\d$, we denote by $\Der(V) := \Der(V, V)$ the differential graded Lie algebra of derivations of $V$, endowed with the differential $\delta := [\d, \blank]$.\\
We adopt the cohomological convention: all differential graded vector spaces are assumed to be cochain complexes, i.e., their differentials $\d$ have degree $+1$.

We will tacitly regard every differential graded Lie algebra (DGLA) as an $L_\infty$-algebra via the canonical inclusion of categories
$$
\Dgla \hookrightarrow \Linfty~.
$$
Recall that $\Dgla$ is not a full subcategory of $\Linfty$; there are more general $L_\infty$-morphisms between DGLAs than DGLA morphisms.
We say that a square of $L_\infty$-algebras and $L_\infty$-morphisms is \emph{homotopy commutative} if it is equipped with an explicit non-trivial $L_\infty$-homotopy filling the 2-cell, that is, it is of the form
\begin{displaymath}
\begin{tikzcd}
A \arrow[r, "f"] \arrow[d, "g"'] & B \arrow[d, "h"] \\
C \arrow[r, "k"'] \arrow[ur, Rightarrow, shorten <=4pt, shorten >=4pt, "\eta"] & D
\end{tikzcd}
\end{displaymath}
where $\eta$ is an $L_\infty$-homotopy between $h \circ f$ and $k \circ g$.

When the above homotopy $\eta$ is the identity, we simply say that the square is \emph{commutative}, and depict it as
\begin{displaymath}
\begin{tikzcd}
A \arrow[r, "f"] \arrow[d, "g"'] & B \arrow[d, "h"] \\
C \arrow[r, "k"'] & D
\end{tikzcd}
\end{displaymath}

Given a DGLA $(\g, \d,[\blank,\blank])$, we denote by $\MC(\g)$ the set of Maurer-Cartan elements of $\g$, i.e., degree $1$ elements $\alpha \in \g^1$ satisfying the Maurer-Cartan equation
\begin{equation*}
    \d \alpha + \frac{1}{2}[\alpha, \alpha] = 0~.
\end{equation*}
Any Maurer-Cartan element $\alpha \in \MC(\g)$ induces a \emph{"twisted"} differential $\d_\alpha= \d + \ad_{\alpha}$ on $\g$, where $\ad_{\alpha}:\g\to\g$ is the degree $1$ derivation defined by $\ad_{\alpha}(\eta) := [\alpha, \eta]$ for all $\eta \in \g$.
We say that two Maurer-Cartan elements $\alpha_0, \alpha_1 \in \MC(\g)$ are gauge equivalent if there exists a degree $0$ element $\xi \in \g^0$ such that
\begin{equation*}
    e^{\ad_{\xi}}\circ \big(\d_{\alpha_0}\big) = 
    \big( \d_{\alpha_1} \big) \circ e^{\ad_{\xi}}~,
\end{equation*}
where the exponential $e^{\ad_{\xi}}$ is defined via the usual power series, whenever it converges, e.g., when $\g$ is nilpotent.

As it is customary in infinitesimal deformation theory over a characteristic zero field $\k$, see e.g \cite{Manetti2022}, all DGLA's considered in the following are tacitly assumed to be nilpotent\footnote{i.e., there exists a natural number $k$ such that the $k$-th power of the adjoint action $\ad_\xi^k$ is identically zero for any element $\xi$} by considering a DGLA $\g$ as a placeholder for the collection of nilpotent DGLA's $\{ \g \otimes \mathfrak{m}_A\}_{A \in \Artin}$ where $\mathfrak{m} _A$ is the maximal ideal of a local Artin $\k$-algebra $A$.
%The exponential $\exp(\g)$ of a nilpotent Lie algebra $\g$ can be viewed as a group with the same underlying set as $\g$, equipped with the Baker–Campbell–Hausdorff product
%$$     x \bullet y = x + y + \tfrac{1}{2}[x,y] + \cdots~, $$
%since the nilpotency of $\g$ ensures that the series truncates to a finite sum.
The nilpotency of $\g$ ensures that the \emph{gauge action} of $a\in\g^0$ on $x\in\g^1$ as
\begin{equation}\label{eq:MC-Gauge-Transformation}
    e^a \star x = x + \sum_{n \ge 0} \frac{(\mathrm{ad}_a)^n}{(n+1)!}\big([a,x] - \d a\big)~
\end{equation}
is well defined.
This action is more generally defined whenever the series on the right-hand sides converges in $\g$ (or in a suitable completion of $\g$) under some notion of convergence.
The set of Maurer–Cartan elements is stable under the gauge action.
%, meaning that two Maurer-Cartan elements $x,y\in\MC(\g)$ are gauge equivalent if and only if there exists $a\in\g_0$ such that $y=e^a\star x$.
%
Remarkably, gauge equivalence of Maurer-Cartan elements coincides with their \emph{homotopy equivalence} between Maurer–Cartan elements, see \autoref{appendix:MC-homotopy} or \cite[\S 6.3]{Manetti2022} for details. 
We will make use of this fact in Section \autoref{sec:RogerstoCourant}.

For $M$ a smooth manifold, we denote by $\Omega(M)$ the graded-commutative associative algebra of differential forms on $M$, endowed with the wedge product $\wedge$ and with the de Rham differential $\ddR$, and by $\X(M)$ the Lie algebra of vector fields on $M$. When no ambiguity arises, we omit the explicit reference to $M$ and write $\Omega$ for $\Omega(M)$ and $\X$ for $\X(M)$. 
For later reference, let us recall that the \emph{Cartan calculus} on $M$ consists of the following distinguished graded derivations of $\Omega$:
\begin{itemize}
    \item the Lie derivative $\Lie_X$ along a vector field $X$, of degree $0$, i.e.\ $\Lie_X \in \Der(\Omega)^0$;
    \item the contraction (or interior product) $\iota_X$ along a vector field $X$, of degree $-1$, i.e.\ $\iota_X \in \Der(\Omega)^{-1}$;
    \item the de Rham differential $\ddR$, of degree $1$, i.e.\ $\ddR \in \Der(\Omega)^1$.
\end{itemize}
These operators satisfy the following identities for all vector fields $X, Y \in \mathfrak{X}$:
\begin{align}
    \ddR^2 &= 0~,\label{eq:CartanDsquared} \\
    \Lie_X \circ \ddR - \ddR \circ \Lie_X &= 0~,\label{eq:CartanLievsD} \\
    \ddR \circ \iota_X + \iota_X \circ \ddR &= \Lie_X~,\label{eq:CartanDvsIota} \\
    \Lie_X \circ \Lie_Y - \Lie_Y \circ \Lie_X &= \Lie_{[X, Y]_\X}~,\label{eq:CartanLiesquared} \\
    \Lie_X\circ \iota_Y - \iota_Y \circ\Lie_X &= \iota_{[X, Y]_\X}~,\label{eq:CartanLievsIota} \\
    \iota_X \circ\iota_Y + \iota_Y \circ \iota_X &= 0~,\label{eq:CartanIotasquared}
\end{align}
where $[\blank,\blank]_{\X}$ denotes the Lie bracket of vector fields.
We will often consider DGLAs $\g$ built from shifts of $\Omega$ and $\X$, equipped with the Cartan calculus operators above. Throughout, we denote their differential simply by $\d$ and their bracket by $[\blank,\blank]$, with the precise interpretation, namely $\ddR$, $[\blank,\blank]_{\X}$, or suitable combinations thereof, understood from the context.
For $\alpha\in\Omega[k]$, we set $|\alpha|=\deg(\alpha)-k$, where $\deg(\alpha)$ denotes the usual differential form degree on $M$.

%+------------------------------------------------+
\subsection*{Acwnowledgements}
%+------------------------------------------------+
This research is part of the activities of the Department of Excellence (Dipartimento di Eccellenza) Project CUP \href{https://sites.google.com/uniroma1.it/excellence-department/home}{B83C23001390001}.
Both authors are members of the {\it Gruppo Nazionale per le Strutture Algebriche, Geometriche e le loro Applicazioni} (GNSAGA–INdAM). 
AMM acknowledges funding from the {\it European Union's  Horizon 2020} research and innovation programme under Grant Agreement {\it No.~101034324}. 
The authors thank the Mittag-Leffler Institute for its hospitality during the 2025 program {\it Cohomological Aspects of Quantum Field Theory}, and Ruggero Bandiera, Ezra Getzler, Noriaki Ikeda, and Marco Zambon for helpful preliminary discussions and comments on a preliminary draft of this manuscript.

%=================================================#
\section{Background}
%\section{Higher Courant algebroids and \texorpdfstring{$L_\infty$}{L-infinity}-algebras}
%=================================================#

In this section, we recall the definitions of higher Courant algebroids, higher Dirac structures, twisted Courant algebroids, and multisymplectic manifolds. 
We review Zambon's construction \cite{Zambon2012} of the $L_\infty$-algebra associated to a higher Courant structure on a manifold $M$, obtained via a derived bracket construction on the DGLA of smooth functions on $T^*[r]T[1]M$, and Roger's construction \cite{Rogers2012} of the so-called $L_\infty$-algebra of observables.
\\
We assume the reader to be familiar with the basic notions of graded geometry, %in particular with the construction of the shifted cotangent bundle $T^*[r]T[1]M$ of a smooth manifold $M$ and its algebra of smooth functions $\mathcal{C}(T^*[r]T[1]M)$
$L_\infty$-algebras, homotopy fibers in the category of DGLAs, and $L_\infty$-algebras (in \autoref{appendix:linf} we recall some basic results for the reader's convenience).

%+------------------------------------------------+
\subsection{Higher Courant algebroids}
%+------------------------------------------------+
    Courant algebroids are used to simultaneously generalize Lie algebroids, Poisson, and symplectic structures (via Dirac structures).
    Beyond their intrinsic geometric interest, Courant algebroids provide the natural framework for generalized complex geometry in the sense of Hitchin and Gualtieri, and have found applications ranging from constrained and nonholonomic mechanics to duality symmetries in string theory.
    The standard Courant algebroid \(E^{(1)}=TM\oplus T^*M\) can be seen as the prototypical instance of a richer hierarchy of structures called \emph{higher Courant algebroids}, which are used in \cite{Zambon2012} to provide a higher analog of Dirac structures.

    \begin{definition}[Higher Courant Algebroid]\label{def.higher_courant_algebroid}
        Let $M$ be a smooth manifold, $r\geq 0$ an integer and $\sigma\in \Omega^{r+2}_{\mathrm{cl}}(M)$ a closed differential form. 
        A ($\sigma$-twisted)\emph{higher Courant algebroid} (of degree $r$) on $M$ consists of the data
        \begin{displaymath}
            \bigl(E^{(r)},\,\rho,\,\langle\cdot,\cdot\rangle_{\pm},\, [\cdot,\cdot]_{\sigma}\bigr)~,
        \end{displaymath}
        where:
        \begin{itemize}
            \item $E^{(r)}=TM\oplus \wedge^r T^*M$ is a vector bundle over $M$;
            
            \item $\rho: E^{(r)} \to TM$ is the anchor map, given by $\rho \binom{x}{\alpha}= x$ for any $\binom{x}{\alpha}\in \Gamma(E^{(r)})$; 
            
            \item $\langle\cdot,\cdot\rangle_{\pm}: E^{(r)}\otimes E^{(r)}\to \wedge^{r-1}T^*M$ is the (skew-)symmetric pairings, given by 
            \begin{displaymath}
                \left\langle\binom{x_1}{\alpha_1},\binom{x_2}{\alpha_2}\right\rangle_{\pm}=
\tfrac{1}{2}\bigl(\iota_{x_1}\alpha_2 \pm \iota_{x_2}\alpha_1\bigr)~,
                \qquad \forall \binom{x_i}{\alpha_i}\in (E^{(r)})_p~;
            \end{displaymath}
            \item $[\cdot,\cdot]_{\sigma}:\Gamma(E^{(r)})\otimes \Gamma(E^{(r)})\to \Gamma(E^{(r)})$ is the $\sigma$-twisted higher Courant bracket, given by 
            \begin{displaymath}
                \left[\binom{x_1}{\alpha_1},\binom{x_2}{\alpha_2}\right]_{\sigma}
                =
                \binom{[x_1,x_2]}{
            \Lie_{x_1}\alpha_2 - \Lie_{x_2}\alpha_1
- \ddR\,\big\langle \binom{x_1}{\alpha_1},\binom{x_2}{\alpha_2}\big\rangle_{-}
+ \iota_{x_1}\iota_{x_2}\sigma }~,
                \qquad \forall \binom{x_i}{\alpha_i}\in \Gamma(E^{(r)})~;
            \end{displaymath}
        \end{itemize}
    \end{definition}

    \begin{remark}
        Notice that for \(r=0\) one recovers the standard (twisted) Lie algebroid on \(TM\oplus \wedge^0T^*M\cong TM\oplus\mathbb R\);
        for \(r=1\) the standard (twisted) exact Courant algebroid on \(TM\oplus T^*M\).
    Using the Cartan formula $\Lie_X\alpha=\d(\iota_X\alpha)+\iota_X\d\alpha$, the bracket can also be written as
    $$
        \left[\binom{x_1}{\alpha_1},\binom{x_2}{\alpha_2}\right]_\sigma
        =\binom{[x_1,x_2]}{\ddR\,\big\langle \binom{x_1}{\alpha_1},\binom{x_2}{\alpha_2}\big\rangle_{-}
        + \iota_{x_1}\ddR\,\alpha_2 - \iota_{x_2}\ddR\,\alpha_1
        + \iota_{x_1}\iota_{x_2}\sigma}~.
    $$
    \end{remark}

    \begin{remark}[Naming and generalization]
        We notice that in \cite{Ritter2015a} (see also \cite{Miti2024}), the higher Courant algebroids introduced in \autoref{def.higher_courant_algebroid} are referred to as \emph{Vinogradov algebroids}.  
        Moreover, similarly to how the standard Lie algebroid can be regarded as a particular instance of a generic split Lie algebroid (see \cite{Liu1997}), one may seek a more abstract formulation of higher Courant algebroids for which \autoref{def.higher_courant_algebroid} represents only a specific realization of a prototypical standard object. A fully general definition in this direction was proposed in~\cite[Def.~5.6]{Grutzmann2015}.
    \end{remark}

%+------------------------------------------------+
\subsubsection{Higher Courant \texorpdfstring{$L_\infty$}{Lie infinity}-algebras}
%+------------------------------------------------+
%
In this subsection, we recall Zambon's graded-geometric construction\cite{Zambon2012}, which associates a canonical differential graded Lie algebra and the corresponding $L_\infty$-algebra to a higher Courant algebroid, both naturally encoded in the symplectic geometry of a suitable shifted cotangent.

Consider the graded manifold $T^*[r]T[1]M$, where $M$ is an ordinary finite-dimensional smooth manifold of dimension $d$ and $r$ is an arbitrary integer.
It is a standard result in graded geometry (see e.g. \cite{Cueca2021} for a recent overview) that the associated algebra of smooth functions $\mathcal{C}(T^*[r]T[1]M)$ is naturally a (shifted) $r$-Poisson algebra \cite{Cattaneo2006}.
After shifting by $r$ the $r$-Poisson algebra $\mathcal{C}(T^*[r]T[1]M)[r]$, one gets a graded Lie algebra $\gLarge_{r}$.
The GLA $\gLarge_{r}$ is equipped with a distinguished Maurer–Cartan element $\mathcal{S}$ lifting the de Rham differential on $M$.
%
%\begin{remark}[On the choice of the Maurer--Cartan element $\mathcal{S}$]\label{rmk:Sfunction-Twist}
%    The Maurer--Cartan element $\mathcal{S} \in \gLarge$ of \eqref{eq:canonical-MC-element-S} is not chosen arbitrarily but it encodes canonically the deRham differential on $M$ as follows.
More precisely, the natural projection of graded vector bundles over $M$,
    $$
        \pi:~~ T^*[r]T[1]M \twoheadrightarrow T[1]M ~,
    $$
induces, at the level of corresponding algebras of functions, an injective morphism of graded associative algebras
    $$
        \pi^*:~~ \mathcal{C}(T[1]M) \hookrightarrow \mathcal{C}(T^*[r]T[1]M)~.
    $$
It is a standard result in graded geometry that the algebra $\mathcal{C}(T[1]M)$ canonically identifies with the algebra of differential forms $\Omega(M)$ on $M$ (see, e.g.,~[\S 3]\cite{Qiu2011} or \cite{Fairon2017} for an elementary introduction to graded geometry), and one has
    \begin{equation*}
        \lbrace \mathcal{S}, \pi^*(\omega) \rbrace = \pi^*(\ddR\omega)~, \qquad \forall \omega \in \Omega^k(M) ~.
    \end{equation*}

\begin{definition}[The canonical DGLA associated with the standard higher Courant algebroid]
    \label{def:canonical-DGLA-higher-courant}
    Let $M$ be a smooth manifold and $r\geq 0$ an integer.
    The \emph{canonical differential graded Lie algebra associated with the standard higher Courant algebroid on $M$} is the differential graded Lie algebra 
    \begin{equation*}
        \gLarge_{r} = \mathcal{C}(T^*[r]T[1]M)[r]
    \end{equation*}
        obtained by endowing the graded Lie algebra arising from the $r$-desuspension of the $r$-Poisson algebra $\mathcal{C}(T^*[r]T[1]M)$  with the differential $\d=\lbrace\mathcal{S},\cdot \rbrace$.

\end{definition}

    In the following, we will canonically identify differential $k$-forms on $M$ as degree $r-k$ elements in $\gLarge_r$, dropping the symbol $\pi^*$ with a slight abuse of notation. Accordingly, since the differential $\d$ in $\gLarge_r$ is precisely given by the adjoint of $\mathcal{S}$ w.r.t the bracket with $\mathcal{S}$, we can write $$\d \omega = \ddR \omega$$ without ambiguities.
    
    Finally, notice that any differential $(r$+$1)$-form $\omega \in \Omega^{r+1}(M)$ is a degree one element in $\gLarge_r$ and trivially satisfies $\{\omega,\omega\}=0$. 
    Therefore the equation $\ddR \omega =0$ is equivalent to the Maurer–Cartan equation
    \begin{displaymath}
        \d \omega + \frac{1}{2}\{\omega, \omega\} = 0 ~.
    \end{displaymath}
    This means that a differential form $\omega \in \Omega^{r+1}(M)$ is a Maurer–Cartan element in $\gLarge_r$ precisely when $\omega$ is closed.
%\end{remark}
%
As a consequence, any closed differential form $\sigma \in \Omega^{r+1}(M)$ can be used to define a twisted version of \autoref{def:canonical-DGLA-higher-courant}.
\begin{definition}[Canonical DGLA associated with the twisted higher Courant algebroid]
    \label{def:twisted-canonical-DGLA-higher-courant}
    Let $M$ be a smooth manifold, $r\geq 0$ an integer and $\sigma \in \Omega^{r+1}_{\mathrm{cl}}(M)$ a closed differential form.
    The \emph{$\sigma$-twisted canonical differential graded Lie algebra associated with the higher Courant algebroid on $M$} is the differential graded Lie algebra 
    \begin{equation*} 
        \gLarge_{r,\sigma} = \mathcal{C}(T^*[r]T[1]M)[r]
    \end{equation*}
    obtained by shifting the $r$-Poisson algebra $\mathcal{C}(T^*[r]T[1]M)$ and equipping it with the \emph{twisted differential}
    \begin{equation*}
        \d_\sigma = \{\mathcal{S} + \sigma, \blank\}~.
    \end{equation*}
\end{definition}

In \cite{Zambon2012}, with each higher Courant algebroid of degree $r$+$1$ twisted by a closed form $\sigma \in \Omega_{\mathrm{cl}}^{r+1}(M)$ (possibly $\sigma = 0$), Zambon associates an $L_\infty$-algebra, here denoted by $\Courant[\sigma]{\,r-1}$. 
Zambon's construction builds upon the work of Getzler \cite{Getzler2010}, who showed that the truncation in negative degrees $\g^{<0}$ of any differential graded Lie algebra $\g$ carries a canonical $L_\infty[1]$-structure induced by derived brackets. 
\begin{definition}[Higher Courant $L_\infty$-algebra {\cite{Zambon2012}}]
    \label{def:higher-courant-L-infinity}
    Let $M$ be a smooth manifold, $r\geq 1$ an integer and $\sigma \in \Omega^{r+1}_{\mathrm{cl}}(M)$ a closed differential form.
    The \emph{(degree $r{-}1$) $\sigma$-twisted higher Courant $L_\infty$-algebra} $\Courant[\sigma]{\,r{-}1}$ is given by the chain complex
            \begin{equation*}
                \Omega^0(M) \xrightarrow{\d_\sigma} \Omega^1(M) \xrightarrow{\ddR} \cdots \xrightarrow{\ddR} \Omega^{r-2}(M) \xrightarrow{(0,\ddR)} \X(M) \oplus \Omega^{r{-}1}(M)
            ~,
        \end{equation*}
         where the first item is in degree $(1-r)$, the last one is in degree $0$, and with $\mathfrak{X}(M)$ being the space of vector fields on $M$;
         Together with a collection of $r$ skew-symmetric multibrackets $\{\blank,\dots,\blank\}_{k}$ of arity $k$ for $2 \leq k \leq r+1$
         induced by Getzler's derived bracket construction \cite{Getzler2010} on the DGLA $\gLarge_{r,\sigma}$ (see \cite{Zambon2012,Miti2024} for an explicit description).
\end{definition}

Notice that the underlying graded vector space of $\Courant[\sigma]{\,r-1}$ coincides with the desuspension of the negatively graded truncation $\gLarge_{r,\sigma}^{<0}$ of $\gLarge_{r,\sigma}$. %associated with the higher Courant algebroid of degree $r{-}1$ twisted by $\sigma$, 
%I.e, it is essentially a shifted truncation of the deRham complex.
 %
As Getzler already remarks in his preprint \cite{Getzler2010}, the procedure inducing derived brackets on the negative truncation $\g^{<0}$ of a DGLA $\g$ can be understood as a particular instance of the \emph{cone construction} introduced in \cite{Fiorenza2007} applied to the inclusion $\g^{\ge 0} \hookrightarrow \g$,
via the identification  $\g^{<0} \cong \g / \g^{\ge 0}$ (see \cite[Ex. 6.17]{Bandiera2015} for the whole argument in terms of homotopy limits). 
See \cite{Pridham2010} for a proof that the $L_\infty$-algebra associated with the cone construction of a morphism of DGLAs $f:\g \to \h$ indeed models the homotopy fiber $\hoFib(f)$. 
In other words, we have the following:
    \begin{proposition}[Higher Courant $L_\infty$-algebra as homotopy fiber]
        \label{prop:higher-courant-as-hofib}
        Let $M$ be a smooth manifold, $r\geq 1$ an integer and $\sigma \in \Omega^{r+1}_{\mathrm{cl}}(M)$ a closed differential form.
        The \emph{$\sigma$-twisted higher Courant $L_\infty$-algebra of degree $r{-}1$} is a \emph{model for the homotopy fiber} of the canonical inclusion of DGLAs $\gLarge_{r,\sigma}^{\ge 0} \hookrightarrow \gLarge_{r,\sigma}$, i.e.
        \begin{equation*}
            \Courant[\sigma]{\,r{-}1} \cong \hoFib\big(\gLarge_{r,\sigma}^{\ge 0} \hookrightarrow \gLarge_{r,\sigma}\big)~,
        \end{equation*}
        where $\gLarge_{r,\sigma}^{\ge 0}$ is the non-negatively graded truncation of the twisted canonical DGLA $\gLarge_{r,\sigma}$ associated with the higher Courant algebroid of degree $r{-}1$ twisted by $\sigma$ as per \autoref{def:canonical-DGLA-higher-courant}.
    \end{proposition}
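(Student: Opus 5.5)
The plan is to chain together the three facts recalled just before the statement: Zambon's definition of $\Courant[\sigma]{\,r-1}$ through Getzler's derived brackets, Getzler's observation that these brackets are a special case of the Fiorenza–Manetti cone construction, and Pridham's theorem that the cone models the homotopy fiber.

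First, I would check that the inclusion $\iota\colon \gLarge_{r,\sigma}^{\ge 0}\hookrightarrow \gLarge_{r,\sigma}$ is a morphism of DGLAs. The twisted differential $\d_\sigma=\{\mathcal S+\sigma,\blank\}$ has degree $+1$, so it preserves the non-negative part. The bracket has degree $0$, so $\gLarge_{r,\sigma}^{\ge 0}$ is closed under it.

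Second, I would apply the Fiorenza–Manetti cone construction to $\iota$. Its underlying graded space is $\gLarge_{r,\sigma}^{\ge0}\oplus \gLarge_{r,\sigma}[-1]$, and it carries explicit $L_\infty$ brackets built from the bracket, the differential and Bernoulli-number coefficients.

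Because $\iota$ is injective, the projection from the cone onto the desuspended cokernel $(\gLarge_{r,\sigma}/\gLarge_{r,\sigma}^{\ge 0})[-1]\cong \gLarge_{r,\sigma}^{<0}[-1]$ is a quasi-isomorphism. This follows from the long exact sequence in cohomology. I would then follow \cite[Ex.~6.17]{Bandiera2015}, or equivalently the homotopy transfer argument sketched by Getzler, with the following data:
\begin{itemize}
    \item the contraction given by the splitting $\gLarge_{r,\sigma}=\gLarge_{r,\sigma}^{<0}\oplus\gLarge_{r,\sigma}^{\ge0}$;
    \item as homotopy, the identity from the $\gLarge_{r,\sigma}^{\ge0}[-1]$ summand onto the $\gLarge_{r,\sigma}^{\ge 0}$ summand.
\end{itemize}
Transferring the cone brackets along this contraction should produce exactly Getzler's derived brackets $\{x_1,\dots,x_k\}_k=\pm P[\cdots[\d_\sigma x_1,x_2],\dots,x_k]$, where $P$ denotes the projection to negative degrees.

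Once that is in place, $\Courant[\sigma]{\,r-1}$ is $L_\infty$-quasi-isomorphic to the cone. Pridham's theorem \cite{Pridham2010} identifies the cone with $\hoFib(\iota)$, which gives the claim.

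The main obstacle is this transfer step: the transferred brackets must coincide with Getzler's brackets on the nose, including the sign and Bernoulli-coefficient conventions used in \cite{Zambon2012}, not just up to an $L_\infty$-isomorphism. For that reason I would invoke \cite[Ex.~6.17]{Bandiera2015} directly, since it treats exactly the inclusion of a non-negative truncation. It would then remain only to compare conventions: the desuspension, and the fact that Zambon's $r$-shift places $\Omega^{r-1}\oplus\X$ in degree $0$.

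A further check is that nilpotency holds, tacitly via tensoring with $\mathfrak m_A$, so that the homotopy-fiber statement makes sense in the deformation-theoretic model structure.
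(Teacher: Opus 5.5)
Your proposal is correct and follows the same route as the paper. The paper justifies the statement by chaining three facts: Zambon's use of Getzler's derived brackets on $\gLarge_{r,\sigma}^{<0}$; Getzler's remark, made precise in \cite[Ex.~6.17]{Bandiera2015}, that these brackets arise from the Fiorenza--Manetti cone of $\gLarge_{r,\sigma}^{\ge 0}\hookrightarrow\gLarge_{r,\sigma}$; and Pridham's identification of that cone with the homotopy fiber. Your extra details (the inclusion is a DGLA map, the quasi-isomorphism onto $\gLarge_{r,\sigma}^{<0}[-1]$, the explicit contraction) only flesh out what the paper leaves to these citations.

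The obstacle you flag is milder than you fear. An $L_\infty$-isomorphism would already suffice to get a model. In fact, once the inclusion is corrected to $b\mapsto(\d_\sigma b,\,b)$ on degree $-1$ elements, which is needed for it to be a chain map, every transfer tree with an internal edge vanishes. The reason is that the $\gLarge_{r,\sigma}[-1]$-components of the cone brackets of such inputs lie in negative degrees, and your homotopy kills those. So the transferred brackets are exactly the derived ones.
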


In \autoref{sec:smallg} we describe an alternative derivation of the $L_\infty$-algebra $\Courant[\sigma]{\,r{-}1}$ starting from a simpler and more tractable DGLA, which simplifies explicit computations and highlights the role played by Cartan calculus.
The proof relies on the following technical lemma.

\begin{lemma}\label{lemma:Getzler-fibers-isomorphism}
    Let $\g$ be a DGLA and let $\h \hookrightarrow \g$ a sub-DGLA such that $\h^{<0} = \g^{<0}$ as graded vector spaces. 
    %and the following diagram holds at the level of the underlying graded vector spaces
    Then the commutative diagram of DGLAs
    \begin{equation}\label{eq:diagramma-h-g}
        \begin{tikzcd}
            \h^{\geq 0} \ar[r,hookrightarrow] \ar[d,hookrightarrow] & \h  \ar[d,hookrightarrow] % \ar[r,twoheadrightarrow]
            %& \frac{\h}{\h_{\geq 0}}\cong \h_{<0} \ar[d,equal] 
            \\
            \g^{\geq 0} \ar[r,hookrightarrow] & \g %\ar[r,twoheadrightarrow] & \frac{\g}{\g_{\geq 0}}\cong \g_{<0}        
        \end{tikzcd}
    \end{equation}
    induces an equivalence of $L_\infty$-algebras 
    $$\GetzFib[\g] \cong \GetzFib[\h]~.$$ 
    %of the inclusions $\g_{\geq 0} \hookrightarrow \g$ and $\h_{\geq 0} \hookrightarrow \h$ are isomorphic as $L_\infty[1]$-algebras.
\end{lemma}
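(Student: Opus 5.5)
Since $\h^{<0} = \g^{<0}$ as graded vector spaces, the natural strategy is to show directly that Getzler's $L_\infty[1]$-structures on the two negative truncations coincide, so that the identity map of underlying graded vector spaces is a strict isomorphism $\GetzFib[\h]\cong\GetzFib[\g]$. I would recall Getzler's explicit formulas: for a DGLA $(\g,\d,[\blank,\blank])$, the $L_\infty[1]$-structure on $\g^{<0}$ (suitably shifted) has unary bracket $\ell_1(x) = P(\d x)$, where $P$ denotes the projection onto $\g^{<0}$ along $\g^{\ge0}$. Its binary bracket is essentially $P[x,y]$ plus corrections, and its higher brackets $\ell_k$ are given by Bernoulli-number-weighted sums of nested brackets of the form $P[\cdots[(1-P)\d x_{1}, x_{2}],\dots, x_k]$, symmetrized over the arguments. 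The key observation is that every ingredient in these formulas is either an element of $\g^{<0}$, the differential or a bracket applied to such elements, or the projection $1-P$ onto $\g^{\ge 0}$.

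The central step is to check that for $x_i\in\h^{<0}=\g^{<0}$ all these expressions can be computed inside $\h$. Since $\h$ is a sub-DGLA, $\d x$ and all iterated brackets of elements of $\h$ lie in $\h$, and are computed by the same operations as in $\g$. It remains to see that the projections agree. Because $\h^{<0}=\g^{<0}$, the decomposition $\h = \h^{<0}\oplus\h^{\ge 0}$ is the restriction of $\g = \g^{<0}\oplus\g^{\ge0}$. The degree decomposition is therefore compatible, and $P_\h = P_\g|_\h$. Hence each $\ell_k^{\h}(x_1,\dots,x_k)$ equals $\ell_k^{\g}(x_1,\dots,x_k)$ term by term. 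The identity is then an isomorphism of $L_\infty[1]$-algebras, and after desuspension of $L_\infty$-algebras, and the equivalence is strict.

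Finally, I would connect this with the diagram \eqref{eq:diagramma-h-g} and the homotopy-fiber interpretation of \autoref{prop:higher-courant-as-hofib}. The vertical inclusions give a morphism of cospans, hence a morphism of Fiorenza--Manetti cones. Under the identifications $\h/\h^{\ge0}\cong\h^{<0}=\g^{<0}\cong\g/\g^{\ge0}$, the induced map is the identity just described, so the isomorphism is precisely the one induced by the diagram. The main obstacle I expect is bookkeeping: making sure Getzler's higher brackets really only involve $\d$, the bracket and the projections, with no hidden dependence on, for instance, a choice of complement or on degree-$0$ elements of $\g$ outside $\h$. The terms $(1-P)\d x$ land in degree $0$ of $\g$, but they lie in $\h^0$ because $\d x\in\h$, so this potential issue disappears. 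Still, I would verify carefully that naturality of Getzler's construction with respect to DGLA morphisms preserving the truncation is the right framework; the lemma is an instance of that naturality.
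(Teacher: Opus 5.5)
Your proposal is correct and follows essentially the same route as the paper: recall Getzler's explicit formulas for the $L_\infty[1]$-structure on the negative truncation, and observe that every ingredient ($\d$, the bracket, the degree truncation) is computed inside the sub-DGLA $\h$, so the two structures on $\h^{<0}=\g^{<0}$ literally coincide. One small slip, which does not affect the argument: Getzler's binary bracket is not ``$P[x,y]$ plus corrections'' (the plain bracket $[x,y]$ has the wrong degree there). It is the symmetrized derived bracket built from $[Dx,y]$, i.e.\ the $k=2$ case of your own nested formula, and such terms are likewise computed within $\h$.
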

\begin{proof}
    An explicit model for the homotopy fiber $\GetzFib[\g]$ is given in \cite[Thm. 3]{Getzler2010} and in our cohomological conventions reads as follows.
    As a chain complex, $\hoFib(\g^{\geq 0} \hookrightarrow \g)$ is equivalent to $(\g / \g^{\geq 0})[1] \cong \g^{<0}[1]$,
    where the differential on $\g^{<0}$ is given by the truncated differential
    \begin{displaymath}
        \morphism{\{\blank\}_1}{\g^{<0}}{\g^{<0}[1]}{a}
        {
            \begin{cases}
            \d a & \text{if } |a| < -1 ~, \\
            0 & \text{if } |a| \geq -1~.
            \end{cases}
        }
    \end{displaymath}
    %
    %Let be $L$ the graded vector space underlying the DGLA $\g$. Denote by $L_{\geq 0}$, resp. $L_{<0}$ the non-negatively , resp. positive, graded truncation of $L$. $L_{\geq 0}$ is a sub-DGLA of $\g$ and $L_{<0}$ is isomorphic, as a graded vector space, to the quotient $L/L_{\geq 0}$.
    The chain complex $\g^{<0}$ carries an $L_\infty[1]$-algebra structure with multibrackets given by derived brackets as follows.
    For $n=1$, the unary bracket is given by the truncated differential above.
    For $n>1$, the $n$-ary multibracket is given by
    \begin{displaymath}
        \morphism{\{\blank,\dots,\blank\}_n}{(\g^{<0})^{\otimes n}}{\g^{<0}[1]}{a_1\otimes \cdots \otimes a_n}
        {
            \displaystyle
            b_n \sum_{\sigma \in S_{n+1}}(-1)^{\varepsilon(\sigma)}
            [[\dots[[Da_{\sigma_1},a_{\sigma_1}],a_{\sigma_2}],\dots],a_{\sigma_n}]
        }
    \end{displaymath}
    where $D$ is the operator defined as
    \begin{displaymath}
        \morphism{D}{\g}{\g[1]}{a}{\begin{cases}
            \d a & \text{if } |a| < 0 ~, \\
            0 & \text{if } |a| \geq 0~,
        \end{cases}}
    \end{displaymath}
    the coefficient $b_n=\frac{(-1)^n B_n}{n!}$ is proportional to the  the Bernoulli numbers $B_n$,
    and the sign $(-1)^{\varepsilon(\sigma)}$ is the Koszul sign associated with the permutation $\sigma \in S_{n+1}$ acting on the graded tensor product $a_1\otimes \cdots \otimes a_n$.
    \\
    By the explicit formulas above, one directly sees that the $L_\infty[1]$-algebra structure on $\h^{<0}$ coincides with the one on $\g^{<0}$.
%    Now, consider two DGLAs $\g$ and $\h$ such that $\h\hookrightarrow \g$  and $\h_{<0} \equiv \g_{<0}$ as graded vector spaces.
 %   Then, the above construction applied to $\g$ and $\h$ yields two identical $L_\infty[1]$-algebras structures on the same underlying graded vector space $\g_{<0} \equiv \h_{<0}$ defined by the same derived brackets.
\end{proof}

%+------------------------------------------------+
\subsection{Multisymplectic Observables \texorpdfstring{$L_\infty$}{L-infinity}-algebra}
%+------------------------------------------------+
Given a smooth manifold $M$, a differential form $\sigma \in \Omega^{r+1}(M)$ is said to be \emph{multisymplectic} (or $r$-plectic) if it is closed and non-degenerate\footnote{In this context, non-degenerate means that the bundle map $\iota_{\blank}\sigma:TM \to \wedge^r T^*M$ is injective.} when the latter condition is dropped, one speaks of a \emph{pre-multisymplectic structure} \cite{Cantrijn1999}. 
The motivation for studying such geometric structures originates from the search for a purely geometric formulation of classical field theory \cite{Gotay1997,Gotay2004}. As noted, for instance, in \cite{Forger2015}, one of the main obstacles to developing this framework has been the difficulty of identifying a suitable analogue of the Poisson algebra of observables for a multisymplectic phase space. 
In the early 2010s, Rogers proposed a natural candidate for such an algebra by generalizing the classical Poisson algebra to an $L_\infty$-algebra of observables \cite{Rogers2012}. Subsequently, Rogers, together with Fiorenza and Schreiber, \cite{Fiorenza2014} observed that this construction admits a homotopical interpretation: it can be understood as a model for the homotopy fiber of the $L_\infty$-morphism given by contractio with $\sigma$. 
\\
This section is devoted to recalling these constructions and establishing the necessary background to relate them to the higher Courant framework.
The starting point consists of noticing that prescribing a preferred closed differential form $\sigma \in \Omega^{r+1}_{cl}(M)$ yields a criterion for selecting a distinguished class of vector fields on $M$; namely, the Hamiltonian ones.
\begin{definition}[Hamiltonian pairs]\label{def:hamiltonian-pairs}
    Let $M$ be a smooth manifold and $\sigma \in \Omega^{r+1}_{cl}(M)$ a closed differential form of degree $r+1$.
    We call a \emph{Hamiltonian vector field} any vector field $X \in \mathfrak{X}(M)$ such that the contraction $\iota_X \sigma$ is an exact $r$-form and denote by 
    \[
    \mathfrak X_{\sigma,\mathrm{ham}}   := \{\, X \in \mathfrak X \mid \iota_X \sigma \text{ is exact} \,\}
    \]
    the vector space of all Hamiltonian vector fields on $M$.
    We call a \emph{Hamiltonian pair} any pair $(X,\alpha) \in \mathfrak{X}(M) \oplus \Omega^{r-1}(M)$ such that $\iota_X \sigma + \ddR \alpha = 0$ and denote by 
    \[
    \Ham^{(r-1)}_\sigma
    := \{\, (X,\alpha) \mid \iota_X \sigma + \mathrm d \alpha = 0 \,\}
    \subseteq \mathfrak X \oplus \Omega^{r-1}
    = \Gamma(E^{(r-1)}).
    \]
    the vector space of all Hamiltonian pairs on $M$.
\end{definition}

\begin{remark}%[On the non-degeneracy condition] 
    Hamiltonian vector fields form a Lie subalgebra of $\X(M)$ since
    $$
    \iota_{[X,Y]}\sigma = \mathcal L_X(\iota_Y\sigma) - \iota_Y(\mathcal L_X\sigma)~.
    $$ 
    Notice that a non-degeneracy condition on $\sigma$ is not required to obtain a Lie algebra: it suffices that a primitive function exists. 
    %However, non-degeneracy ensures that the projection $\Ham^{(r)}_\sigma \to \X_{\sigma,\mathrm{ham}}$ is an isomorphism, i.e., that each Hamiltonian vector field admits a unique Hamiltonian form up to a closed form.
\end{remark}
    In the symplectic case ($r=1$), non-degeneracy is crucial to ensure that the space of Hamiltonian functions $C^\infty(M)$ is closed under the Poisson bracket, making it a Poisson algebra and so in particular a Lie algebra.
    In the multisymplectic case ($r>1$), the situation is more subtle: the space of Hamiltonian pairs is not a Lie algebra under any natural bracket operation, and one must resort to higher brackets to obtain an $L_\infty$-algebra structure.

    \begin{definition}[Multisymplectic observables \texorpdfstring{$L_\infty$}{L-infinity}-algebra {\cite{Rogers2012}}]
        Let $M$ be a smooth manifold and $\sigma \in \Omega^{r+1}_{cl}(M)$ a closed differential form of degree $r+1$.\\
        The \emph{(Rogers') multisymplectic observables $L_\infty$-algebra} $\Rogers[\sigma]{r-1}$ is given by chain complex
        \begin{equation*}
            \Omega^0(M) \xrightarrow{\ddR} \Omega^1(M) \xrightarrow{\ddR} \cdots \xrightarrow{\ddR} \Omega^{r-2}(M) \xrightarrow{(0,\ddR)} \Ham^{(r-1)}_\sigma,
        \end{equation*}
        where the first item is in degree $-r+1$, the last one is in degree $0$,
        and whose only non-trivial multibrackets are given by:
        %Together with a collection of $r$ skew-symmetric multibrackets $\{\blank,\dots,\blank\}_{k}$ of arity $k$ for $2 \leq k \leq r+1$ given by
\begin{equation}\label{eq:rogers-multibrackets}
	\begin{tikzcd}[column sep= small,row sep=0ex]
		\{\blank,\dots,\blank\}_k ~\colon&[-1em] \left(\Ham^{(r-1)}_\sigma\right)^{\otimes k} 	\arrow[r]& 				\Omega^{r+1-k}(M) \\[-1ex]
		& \binom{x_1}{\sigma_1}\otimes\dots\otimes \binom{x_k}{\sigma_k} 	\ar[r, mapsto]& 	(-1)^{k+1}
		\iota_{x_1}\cdots\iota_{x_k}\sigma~. 
	\end{tikzcd}
\end{equation}
\end{definition}
\noindent
The verification that the above brackets satisfy the $L_\infty$-algebra identities can be found in \cite[Thm. 5.2]{Rogers2012} (see also \cite[Lem. 2.3.9]{Miti2021}).

A crucial observation is that, like Zambon's Courant $L_\infty$-algebra, also the multisymplectic observables $L_\infty$-algebra can be understood as the homotopy fiber of a suitable $L_\infty$-morphism between DGLAs \cite[Thm. 3.12]{Fiorenza2014}.
\begin{proposition}[Multisymplectic observables as a homotopy fiber]
    The multisymplectic observables $L_\infty$-algebra $\Rogers[\sigma]{r-1}$ is a model for the homotopy fiber of the $L_\infty$-morphism %(see \cite[Prop. 3.8]{Fiorenza2014})
    $\iota_\infty \sigma: \X_{\sigma,\mathrm{ham}}(M) \to \Trunc\left(\Omega(M)[n]\right)$, given for any $k\geq 1$ by
    \begin{equation}\label{eq:iota-infinity-sigma}
        \morphism{(\iota_\infty\sigma)^k}{\big(\X_{\sigma,\mathrm{ham}}(M)\big)^{\otimes k}}{\Omega^{r+1-k}(M)}{x_1 \otimes \cdots \otimes x_k}
        {-\iota_{x_1} \cdots \iota_{x_k} \sigma}~,
%        ~,
    \end{equation}
    where $\X_{\sigma,\mathrm{ham}}(M)$ is the Lie algebra of Hamiltonian vector fields on $M$ %(see \autoref{def:hamiltonian-pairs})
    and $\Trunc\left(\Omega(M)[n]\right)$ is the abelian DGLA (i.e., the chain complex) given by the canonical truncation of the $n$-shifted de Rham complex
    \begin{displaymath}
        \Omega^0(M) \xrightarrow{\ddR} \Omega^1(M) \xrightarrow{\ddR} \cdots \xrightarrow{\ddR} \Omega^{n-1}(M) \xrightarrow{\ddR} \ddR\Omega^{n-1}(M)~,
    \end{displaymath}
    with $\ddR\Omega^n(M)$ in degree $0$. 
\end{proposition}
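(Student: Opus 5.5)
We would prove this by constructing an explicit model of the homotopy fiber with the cone construction of Fiorenza–Manetti \cite{Fiorenza2007}. We then identify that model with Rogers' $L_\infty$-algebra by a homotopy transfer that kills an acyclic piece; this is the strategy of \cite[Thm.~3.12]{Fiorenza2014}, which we only sketch here. For an $L_\infty$-morphism $f\colon \mathfrak{h}\to L$ into an abelian DGLA $L$, a model for $\hoFib(f)$ is given as follows. The underlying complex is the mapping cocone $\mathfrak{h}\oplus L[-1]$, with differential $(x,\ell)\mapsto(\d x, f^1(x)-\d\ell)$, up to sign conventions. The brackets are obtained from the brackets of $\mathfrak{h}$ together with the higher components $f^k$. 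Because $L$ is abelian, the only nonzero brackets are
\begin{displaymath}
    [x_1,\dots,x_k]=\bigl([x_1,x_2]\ \text{if } k=2\bigr)\ \oplus\ \pm f^k(x_1,\dots,x_k)~,
\end{displaymath}
together with brackets that vanish whenever an $L$-entry is involved. We first check that $\iota_\infty\sigma$ is indeed an $L_\infty$-morphism. The identity reduces to
\begin{displaymath}
    \ddR\,\iota_{x_1}\cdots\iota_{x_{k+1}}\sigma=\sum \pm\,\iota_{[x_i,x_j]}\iota_{x_1}\cdots\widehat{\ \ }\cdots\sigma~,
\end{displaymath}
which is the standard iterated Cartan identity for closed $\sigma$ (Rogers' lemma). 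In degree one we need $\iota_X\sigma\in\ddR\Omega^{r-1}$, and this holds because $X$ is Hamiltonian. This explains why $\X_{\sigma,\mathrm{ham}}$ and the truncation $\Trunc$ appear.

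Next we write the cocone explicitly (with $n=r$). Its degree-zero part is $\X_{\sigma,\mathrm{ham}}\oplus\Omega^{r-1}$, and the differential sends $(X,\alpha)\mapsto \iota_X\sigma+\ddR\alpha\in\ddR\Omega^{r-1}$, which now sits in degree $1$. In negative degrees the complex is $\Omega^0\to\cdots\to\Omega^{r-2}\to\Omega^{r-1}$, the de Rham complex shifted. The cocone therefore has a two-term acyclic tail at the top. Its degree-$1$ piece $\ddR\Omega^{r-1}$ is hit surjectively, and the degree-$0$ kernel is exactly $\Ham^{(r-1)}_\sigma$, since the first component of a Hamiltonian pair is automatically Hamiltonian. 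The inclusion of Rogers' complex into the cocone is therefore a quasi-isomorphism of complexes. This reflects the truncation being the canonical one ending in $\ddR\Omega^{r-1}$.

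Then we verify that this inclusion is strict, i.e.\ that it preserves brackets. On elements of degree $\le 0$ lying in $\Ham^{(r-1)}_\sigma\oplus\Omega^{<r-1}$, the cocone brackets are as follows. The binary bracket is $[(X,\alpha),(Y,\beta)]=([X,Y],\pm\iota_X\iota_Y\sigma)$ and the $k$-ary brackets are $\pm\iota_{x_1}\cdots\iota_{x_k}\sigma$; all others vanish. Compared with \eqref{eq:rogers-multibrackets}, they agree up to a sign normalization, which we fix by rescaling by $-1$ to match the sign in \eqref{eq:iota-infinity-sigma}. One subtlety is that Rogers' degree-$0$ bracket has vector field component $[X,Y]$ paired with the form $\iota_X\iota_Y\sigma$. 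This pair is Hamiltonian, since $\iota_{[X,Y]}\sigma=\pm\ddR\iota_X\iota_Y\sigma$ for Hamiltonian $X,Y$, so the sub-complex is closed under brackets. Hence it is a sub-$L_\infty$-algebra, and the inclusion is a quasi-isomorphism.

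The main obstacle is the sign bookkeeping. The cocone bracket conventions, the décalage between $L_\infty$ and $L_\infty[1]$, and the sign $(-1)^{k+1}$ in Rogers' brackets must all be reconciled. We would handle this by working entirely in $L_\infty[1]$ conventions with the Koszul rule, checking $k=2,3$ by hand, and then inducting via the iterated Cartan identity.
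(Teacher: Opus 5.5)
Your sketch is sound and is essentially the argument behind \cite[Thm.~3.12]{Fiorenza2014}, which the paper cites rather than reproves. You model the homotopy fiber of an $L_\infty$-morphism into an abelian DGLA by the cocone extension whose higher brackets are the components $(\iota_\infty\sigma)^k$, and Rogers' complex includes into it quasi-isomorphically because the cokernel is the acyclic piece $(\X_{\sigma,\mathrm{ham}}\oplus\Omega^{r-1})/\Ham^{(r-1)}_\sigma\xrightarrow{\cong}\ddR\Omega^{r-1}$. (For a non-strict morphism this is the standard abelian-target extension, not literally the Fiorenza--Manetti cone, which is built for strict DGLA maps.)

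The sign step needs correcting. With your cocone convention $(x,\ell)\mapsto(\d x,f^1(x)-\d\ell)$ the negative-degree forms carry $-\ddR$, and no uniform rescaling by $-1$ can change that, since a scalar commutes with $\ddR$. What you need instead is the degree-wise twist by $(-1)^{j}$ on forms in degree $j$, which makes the negative-degree differential $\ddR$ and the degree-$0$ kernel exactly $\Ham^{(r-1)}_\sigma$. After that, closure of $\Ham^{(r-1)}_\sigma$ under the binary bracket, together with the $L_\infty$ relations, forces the bracket signs to agree with Rogers'. Note that Rogers' identity, used in those relations, needs $\Lie_{x_i}\sigma=0$ for Hamiltonian $x_i$, not just $\ddR\sigma=0$.
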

\begin{remark}[On sign conventions]
    The apparent discrepancy of signs in \eqref{eq:rogers-multibrackets} with \cite{Callies2016} and \eqref{eq:iota-infinity-sigma} with \cite{Fiorenza2014} is due to different conventions adopted regarding the multicontraction operator,
    since $\iota_{x_1} \cdots \iota_{x_k} \sigma = (-1)^{\frac{k(k-1)}{2}}\iota_{x_k} \cdots \iota_{x_1} \sigma$.
    % and $\sum_{i=1}^{k-1}i = \frac{k(k-1)}{2}$.
\end{remark}
    %\antonio{In Callies-Fregier-Rogers-Zambon si definisce come $\varsigma(k) \iota_{x_k}\cdots \iota_{x_1}\sigma$ con $\varsigma(k) = -(-1)^{\frac{k(k+1)}{2}}=-(-1^{k + \frac{k(k-1)}{2}})=(-1)^{k+1}(-1)^{\frac{k(k-1)}{2}}$}
    %\antonio{in Fiorenza-Rogers Schreiber è definito come ${-(-1)^{\frac{k(k-1)}{2}}\iota_{x_k} \cdots \iota_{x_1} \sigma} $ ma $\iota_{x_1} \cdots \iota_{x_k} \sigma = (-1)^{\sum_{i=1}^{k-1}i}\iota_{x_k} \cdots \iota_{x_1} \sigma$ e $\sum_{i=1}^{k-1}i = \frac{k(k-1)}{2}$}

    In conclusion, both the multisymplectic observables $L_\infty$-algebra and the higher Courant $L_\infty$-algebra can be understood as models for homotopy fibers of suitable morphisms of differential graded Lie algebras: 
    \begin{align*}
        \Rogers[\sigma]{r-1} &\cong \hoFib\big(\iota_\infty\sigma)%: \X_{\sigma,\mathrm{ham}}(M) \to \Trunc\left(\Omega(M)[r]\right)\big)
        ~,\\
        \Courant[\sigma]{\,r-1} &\cong \hoFib\big(\gLarge_{r,\sigma}^{\ge 0} \hookrightarrow \gLarge_{r,\sigma}\big)~.
    \end{align*}
    Homotopy fibers are a special case of homotopy limits, and as such, they enjoy a universal property. In the sequel, we will make use of this fact to construct a morphism between the twisted and untwisted higher Courant $L_\infty$-algebras and between the multisymplectic observables $L_\infty$-algebra and the higher twisted Courant $L_\infty$-algebra.

%=================================================#
\section{A morphism between the twisted and untwisted higher Courant \texorpdfstring{$L_\infty$}{L-infinity}-algebras}
%=================================================#
For any $r>0$ one has a map of cochain complexes $\Phi_1\colon \Courant[\sigma]{\,r-1} \to \Courant{r}$ defined by:
\begin{equation}\label{eq:courant-linear}
\begin{tikzcd}[column sep=small]
\Courant[\sigma]{\,r-1} \ar[r,phantom,":="] \ar[d,"\Phi_1"'] &
0 \ar[r] \ar[d] &
\Omega^0 \ar[d,"\d"] \ar[r] &
\Omega^1 \ar[d,"\d"] \ar[r] &
\cdots \ar[r] &
\Omega^{r-2} \ar[d,"\d"] \ar[r,"{(\d,0)}"] &
\Omega^{r-1}\oplus \X \ar[d,"{(\d,\mathrm{id})}"] \\
\Courant{r} \ar[r,phantom,":="] &
\Omega^0 \ar[r] &
\Omega^1 \ar[r] &
\Omega^2 \ar[r] &
\cdots \ar[r] &
\Omega^{r-1} \ar[r,"{(\d,0)}"] &
\Omega^{r}\oplus \X
\end{tikzcd}
\end{equation}
In \cite{Zambon2012}, it has been noticed that for $r=1$, the map $\Phi_1$ is the the linear part of an \(L_\infty\)-morphism \(\Phi\colon \Courant[\sigma]{0}\to \Courant[]{1}\).
The goal of this subsection is to prove that this is true for any $r>0$.
%such a chain map is indeed the linear part of an \(L_\infty\)-morphism between the corresponding higher Courant \(L_\infty\)-algebras $\Courant[\sigma]{\,r-1} \longrightarrow \Courant{r}$.
To this end, we will exploit the universal property of homotopy fibers together with an equivalent description of the higher Courant \(L_\infty\)-algebra based on a smaller and more manageable differential graded Lie algebra.
% than the canonical one introduced in \autoref{def:canonical-DGLA-higher-courant}.

%+------------------------------------------------+
\subsection{The Cartan DGLA and a smaller model for the higher Courant \texorpdfstring{$L_\infty$}{L-infinity}-algebra}
\label{sec:smallg}
%+------------------------------------------------+
%
In this subsection, we introduce a more manageable DGLA $\gSmall_{r,\sigma}$, which is a subDGLA of $\gLarge_{r,\sigma}$ such that the homotopy fiber of $\gSmall_{r,\sigma}\hookrightarrow \gSmall_{r,\sigma}$ is still a model for $\Courant[\sigma]{\,r-1}$.

We denote by $\Omega$ the differential graded commutative associative algebra of differential forms on $M$, equipped with the usual wedge product $\wedge$ and the de Rham differential $\ddR$, and by $\X$ the Lie algebra of vector fields on $M$ and $[\blank,\blank]_\X$ the corresponding Lie bracket. 
Also, we denote by  $\Der(\Omega)$ the DGLA of derivations of $\Omega$, with Lie bracket given by the graded commutator and differential given by 
 $[\ddR, \blank]$ since the de Rham operator on differential form is a Maurer–Cartan element in $\Der(\Omega)$.
    In the spirit of \cite{Alekseev2012}, we now introduce a differential graded Lie subalgebra of $\Der(\Omega)$ encoding the classical Cartan calculus on differential forms.

\begin{definition}[Cartan algebra]\label{def:CartanAlgebra}
    The \emph{Cartan algebra} is the differential graded Lie subalgebra $\Cartan \subset \Der(\Omega)$ generated by the contraction operators (or interior products):
    \begin{equation*}
        \iota_{\X} := \big\lbrace
            \iota_x : \Omega \to \Omega[-1] \,\big|\, x \in  \X
        \big\rbrace~.
    \end{equation*}
\end{definition}
\begin{lemma}
    The differential graded Lie algebra $\Cartan$ is isomorphic to the DGLA $\mathfrak{C}=(\mathfrak{X}[1] \oplus \mathfrak{X},\d_{\mathfrak{C}},[\blank,\blank]_{\mathfrak{C}})$ given by two copies of the Lie algebra of vector fields on $M$ (one in degree $0$ and one in degree $-1$), with the differential $d_{\mathfrak{C}}$ given by the shifted identity map $\id: \mathfrak{X}[1] \to \mathfrak{X}$ and Lie bracket $[-,-]_{\mathfrak{C}}$ given  by 
    \[
    \begin{cases}
        [x,y]_{\mathfrak{C}}=[x,y]\in \mathfrak{X} &\text{for any $x,y\in \mathfrak{X}$}\\
        [x,y_{[1]}]_{\mathfrak{C}} = ([x,y]_{[1]}) \in \mathfrak{X}[1] &\text{for any $x\in \mathfrak{X}$ and $y_{[1]}\in \mathfrak{X}[1]$}\\
        [x_{[1]},y_{[1]}]_{\mathfrak{C}} =0 &  \text{for any $x_{[1]},y_{[1]} \in \mathfrak{X}[1]$}
    \end{cases}
    \]
  \end{lemma}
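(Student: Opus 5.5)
We will prove the lemma by writing down an explicit map $\Psi\colon \mathfrak{C}\to\Cartan$, namely $\Psi(x)=\Lie_x$ for $x\in\X$ in degree $0$ and $\Psi(y_{[1]})=\iota_y$ for $y_{[1]}\in\X[1]$ in degree $-1$. We then check that it is a bijection onto $\Cartan$ and that it respects differentials and brackets.

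The first step is to identify $\Cartan$ as a graded vector space. It is the smallest sub-DGLA of $\Der(\Omega)$ containing $\iota_\X$, so it must be closed under the differential $[\ddR,\blank]$ and under commutators. By \eqref{eq:CartanDvsIota} we have $[\ddR,\iota_x]=\ddR\iota_x+\iota_x\ddR=\Lie_x$, so $\Lie_\X\subset\Cartan^0$. Next we show that $\iota_\X\oplus\Lie_\X$ is already closed under all operations:
\begin{itemize}
  \item $[\iota_x,\iota_y]=0$ by \eqref{eq:CartanIotasquared};
  \item $[\Lie_x,\iota_y]=\iota_{[x,y]}$ by \eqref{eq:CartanLievsIota};
  \item $[\Lie_x,\Lie_y]=\Lie_{[x,y]}$ by \eqref{eq:CartanLiesquared};
  \item $[\ddR,\Lie_x]=0$ by \eqref{eq:CartanLievsD};
  \item $[\ddR,\iota_x]=\Lie_x$, as computed above.
\end{itemize}
Hence $\Cartan=\iota_\X\oplus\Lie_\X$, concentrated in degrees $-1$ and $0$. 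These identities, read through $\Psi$, say exactly the following:
\begin{itemize}
  \item $\Psi\circ\d_{\mathfrak C}=[\ddR,\blank]\circ\Psi$, where on $\X[1]$ this holds because $\d_{\mathfrak C}y_{[1]}=y\mapsto\Lie_y=[\ddR,\iota_y]$, and on $\X$ both sides vanish;
  \item $\Psi[a,b]_{\mathfrak C}=[\Psi a,\Psi b]$ in all three cases of the bracket.
\end{itemize}
Graded antisymmetry handles the remaining ordering $[y_{[1]},x]$. So $\Psi$ is a surjective DGLA morphism.

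The remaining step, and the only one with real content, is injectivity. We need that $x\mapsto\iota_x$ and $x\mapsto\Lie_x$ are injective as maps into $\Der(\Omega)$.
\begin{itemize}
  \item For contractions, if $\iota_x=0$ on $\Omega^1$ then $\alpha(x)=0$ for all $1$-forms $\alpha$, and pointwise this forces $x=0$.
  \item For Lie derivatives, if $\Lie_x=0$ then $\Lie_x f=x(f)=0$ for every function $f$, so $x=0$ as a derivation of $C^\infty(M)$.
\end{itemize}
Since the degree $-1$ and degree $0$ parts lie in different degrees, they are automatically independent. Therefore $\Psi$ is bijective and hence a DGLA isomorphism.
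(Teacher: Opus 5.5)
Your proof is correct and follows essentially the same route as the paper: you identify $\Cartan$ with $\iota_\X\oplus\Lie_\X$ via the Cartan identities (closure under graded commutators and under $[\ddR,\blank]$), then check that the obvious map from $\X[1]\oplus\X$ is a bijective DGLA morphism. Your injectivity arguments (testing $\iota_x$ on $1$-forms and $\Lie_x$ on functions) make explicit what the paper asserts in one line, and your check through $\Psi$ spells out the bracket and differential compatibility that the paper only states in passing.
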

\begin{proof}
    The Cartan algebra $\Cartan$ is generated by the set of contraction operators $\iota_\X$ (of degree $-1$) along vector fields, through the commutator $[\blank, \blank]$ and the differential $[\ddR, \blank]$. From equation \eqref{eq:CartanIotasquared}, it follows that all contraction operators $\iota_X$ commute with each other. Consequently, $\Cartan$ is concentrated in degrees greater than or equal to $-1$.  The degree $0$ part of $\Cartan$ is given by the image of $\iota_\X$ under $[\ddR, \blank]$. By Cartan's formula (equation \eqref{eq:CartanDvsIota}), we have $[\ddR,\iota_X] = \Lie_X$ for any $X \in \X$. Thus, the degree $0$ part of $\Cartan$ is:
    \begin{equation*}
        \Cartan^0 = \big\lbrace
            \Lie_X : \Omega \to \Omega \,\big|\, X \in \X(M)
        \big\rbrace~.
    \end{equation*}

    Equation \eqref{eq:CartanLievsD} ensures that the differential graded vector space underlying $\Cartan$ forms a short exact sequence concentrated in degrees $-1$ and $0$ within $\Der(\Omega)$:
    \begin{equation*}
        \begin{tikzcd}
            0 \arrow[r] & \iota_\X \arrow[rr, "{[\ddR,-]}"] && \Lie_\X \arrow[r] & 0~.
        \end{tikzcd}
    \end{equation*}
    The last two Cartan's identities (equations \eqref{eq:CartanLiesquared} and \eqref{eq:CartanLievsIota}) ensure that the above 2-term complex is indeed a differential graded Lie subalgebra of $\Der(\Omega)$. 

    By the very definition of the contraction $\iota_X$ and Lie derivative $\Lie_X$ with respect to the vector field $X \in \X$, it is clear that there exists a surjective map 
    \begin{equation*}
        \X[1]\oplus \X \to \iota_\X \oplus \Lie_\X~.
    \end{equation*}
    Injectivity follows from the fact that given two vector fields $X, Y \in \X$, the following equations are satisfied for any $\omega \in \Omega$ iff $X = Y$:
    \begin{align*}
        \iota_X(\omega) &= \iota_Y(\omega)~,\qquad
        \Lie_X(\omega) = \Lie_Y(\omega)~.
    \end{align*}
\end{proof}

\begin{remark}
    Notice that while $\X$, and so $\X\oplus\X[1]$, are $C^\infty(M)$-modules and $\Cartan$ is isomorphic to $\X[1] \oplus \X$ as a graded vector space, this isomorphism is not an isomorphism of $C^\infty(M)$-modules. 
    Specifically, for a smooth function $f \in C^\infty(M)$ and vector field $X \in \X$, we generally have $\Lie_{f\, X} \neq f \cdot \Lie_X$.
    Yet, the differential graded vector space $\Omega$ is naturally a dgLie module over $\Cartan$, and so over $\mathfrak{C}$, via the standard Cartan calculus operators; the same is true for the shifted dg-modules $\Omega[r]$, for any $r$.
\end{remark}

\begin{definition}[The DGLA $\gSmall_r$]
    The differential graded Lie algebra $\gSmall_r$ is given by the semidirect product of the Cartan dgLie algebra $\mathfrak{C}$. and the $\mathfrak{C}$-module of differential forms shifted by $r$:
    \begin{equation*}
        \gSmall_r = \mathfrak{C} \ltimes \Omega[r]~.
    \end{equation*}
    More explicitly it is given by the  cochain complex $\mathfrak{C}\oplus\Omega[r]$, i.e.
    $$
        \begin{tikzcd}[column sep = normal]
            \text{\tiny (deg $-r$)}  &  \cdots & \text{\tiny (deg $-2$)} & \text{\tiny (deg $-1$)}
            & \text{\tiny (deg $0$)} & \text{\tiny (deg $1$)} & \cdots
            \\[-2em]
            \Omega^0 \ar[r,"\ddR"]  &
            \cdots \ar[r,"\ddR"] & 
            \Omega^{r-2} \ar[r,"{\ddR}"] \ar[d,phantom,"\oplus"] & 
            \Omega^{r-1} \ar[d,phantom,"\oplus"] \ar[r,"\ddR"] & 
            \Omega^r\ar[d,phantom,"\oplus"]\ar[r,"\ddR"] & \Omega^{r+1}\ar[r,"\ddR"] \ar[d,phantom,"\oplus"]  & \cdots
            \\[-.8em]
            & &0 \ar[r,hook] &\X \ar[r,equal, "\mathrm{id}"] &\X \ar[r,two heads]& 0 &
        \end{tikzcd}~,
    $$
    together with the skewsymmetric bracket $\lbrace\blank,\blank\rbrace_{\gSmall_r}$ defined by
    \begin{align*}
        \lbrace{ X_{[1]}, \alpha}\rbrace_{\gSmall_r} &=~ \iota_X \alpha~= -(-1)^{|\alpha|} \lbrace{\alpha,X_{[1]}}\rbrace_{\gSmall_r}~,
        \\
        \lbrace{ X, \alpha}\rbrace_{\gSmall_r} &=~ \Lie_X \alpha~=-\lbrace{\alpha,X}\rbrace_{\gSmall_r}~,
        \\
        \lbrace{ X, Y}\rbrace_{\gSmall_r} &=~ {[X,Y]_\X}~=-\lbrace{Y,X}\rbrace_{\gSmall_r}~,
        \\
        \lbrace{ X, Y_{[1]}}\rbrace_{\gSmall_r} &=~ ({{[X,Y]_\X}})_{[1]}~=-\lbrace{Y_{[1]},X}\rbrace_{\gSmall_r}~,
    \end{align*}  
    for any $X, Y \in \X$ and $\alpha \in \Omega[r]$ with $|\alpha|=\deg(\alpha){-}r$, and zero in all other caaes.
\end{definition}
    In particular, any closed differential form $\sigma \in \Omega^{r+1}$ is a Maurer-Cartan element in $\gSmall_r$ and can therefore be used to twist the DGLA structure of $\gSmall_r$.
\begin{definition}[Twisted DGLA $\gSmall_{r,\sigma}$]\label{def:smallg-sigma}
   The \emph{$\sigma$-twisted DGLA} $\gSmall_{r,\sigma}$ is the differential graded Lie algebra obtained by twisting  $\gSmall_r$ by the Maurer-Cartan element $\sigma$. More explicitly, it is the same Lie bracket as $\gSmall_{r,\sigma}$, while the twisted differential  $\d_\sigma$ is given by $ \d_\sigma=d_{\gSmall_r}+ \{\sigma, \blank\}_{\gSmall_r}$. 
   That is,
       \begin{align*}
        \d_\sigma X_{[1]} &= X + \iota_X \sigma~,
        \\
        \d_\sigma X &= -\Lie_X \sigma~,
        \\
        \d_\sigma \alpha &= \ddR \alpha ~,
    \end{align*}
    for any $X \in \X$ and $\alpha \in \Omega[r]$.

\end{definition}

\begin{lemma}\label{lemma:gsmall-into-glarge}
    There is a canonical inclusion of differential graded Lie algebras $\gSmall_{r,\sigma} \hookrightarrow \gLarge_{r,\sigma}$.
\end{lemma}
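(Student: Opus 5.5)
We will write down the map explicitly in local Darboux coordinates on $T^*[r]T[1]M$ and then check that it intertwines brackets and differentials. Choose local coordinates $x^i$ on $M$, fiber coordinates $\xi^i=\d x^i$ of degree $1$ on $T[1]M$, and conjugate momenta $p_i$ (degree $r$) and $\theta_i$ (degree $r-1$) on $T^*[r]T[1]M$. In these coordinates $\mathcal S=\xi^i p_i$ up to sign conventions. Define
\[
\Psi(\alpha)=\pi^*\alpha,\qquad \Psi(X_{[1]})=X^i\theta_i,\qquad \Psi(X)=\{\mathcal S, X^i\theta_i\}.
\]
The last expression is $\pm\big(X^ip_i + \partial_jX^i\,\xi^j\theta_i\big)$, which is the function lifting $\Lie_X$. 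Under the $r$-shift, $X^i\theta_i$ has degree $-1$ and $\{\mathcal S,X^i\theta_i\}$ has degree $0$, so degrees match those in $\gSmall_r$. Both expressions are coordinate independent: $X^i\theta_i$ is the fiberwise linear function on $T^*[r]T[1]M$ given by pairing with the vertical lift of $X$, and the second is its image under $\d$. Hence $\Psi$ is a globally defined linear map.

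We first verify the statement without twist, i.e.\ that $\Psi\colon\gSmall_r\to\gLarge_r$ is a morphism of DGLAs.

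\textbf{Differential.} We have $\Psi(\d X_{[1]})=\Psi(X)=\{\mathcal S,\Psi(X_{[1]})\}$ by definition. Next, $\{\mathcal S,\Psi(X)\}=\{\mathcal S,\{\mathcal S,\cdot\}\}=\tfrac12\{\{\mathcal S,\mathcal S\},\cdot\}=0$, which matches $\d X=0$ in $\gSmall_r$. Finally, $\{\mathcal S,\pi^*\alpha\}=\pi^*\ddR\alpha$ was recalled earlier in the paper.

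\textbf{Brackets.} In local coordinates one computes
\begin{itemize}
\item $\{X^i\theta_i,\pi^*\alpha\}=\pm\pi^*\iota_X\alpha$, since $\theta_i$ is conjugate to $\xi^i$ and $\partial/\partial\xi^i$ acts on forms as contraction;
\item $\{X^i\theta_i,Y^j\theta_j\}=0$, since the functions are independent of $p$ and $\xi$;
\item $\{\pi^*\alpha,\pi^*\beta\}=0$.
\end{itemize}
The remaining brackets follow formally from the graded Jacobi identity and the derivation property of $\{\mathcal S,\cdot\}$. For example, $\{\Psi X,\pi^*\alpha\}=\{\mathcal S,\{\Psi X_{[1]},\alpha\}\}\pm\{\Psi X_{[1]},\ddR\alpha\}$, which equals $\pi^*(\ddR\iota_X+\iota_X\ddR)\alpha=\pi^*\Lie_X\alpha$. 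In the same way,
\[
\{\Psi X,\Psi Y_{[1]}\}=\Psi([X,Y]_{[1]}),\qquad \{\Psi X,\Psi Y\}=\Psi[X,Y].
\]
These reduce to the Cartan identities \eqref{eq:CartanLievsIota} and \eqref{eq:CartanLiesquared}, once one shows that a function $F$ of the form $Z^i\theta_i$ is determined by its brackets with $\pi^*\Omega$. That uniqueness can be checked by a short coordinate computation.

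\textbf{Injectivity.} On $\Omega[r]$ the map $\Psi$ is $\pi^*$, which is injective. On the vector-field parts, $X^i\theta_i$ and $X^ip_i+\dots$ are linearly independent from $\pi^*\Omega$ and vanish only when $X=0$.

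\textbf{Twisting.} Since $\Psi$ sends $\sigma\in\gSmall_r$ to the Maurer--Cartan element $\sigma\in\gLarge_r$ and preserves brackets, it intertwines $\d+\{\sigma,\cdot\}$ on both sides. Therefore $\Psi\colon\gSmall_{r,\sigma}\hookrightarrow\gLarge_{r,\sigma}$ is a DGLA inclusion. As a consistency check, $\d_\sigma X_{[1]}=X+\iota_X\sigma$ matches $\{\sigma,X^i\theta_i\}=\pm\iota_X\sigma$.

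The main obstacle is keeping the signs consistent: the Koszul signs of the $r$-shift together with the sign conventions of the $r$-Poisson bracket must reproduce exactly the bracket signs in the definition of $\gSmall_r$. If a global sign mismatch appears, the first thing to try is rescaling $\Psi(X_{[1]})$ by $\pm1$, and if necessary by a degree-dependent sign.
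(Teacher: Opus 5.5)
Your proposal is correct and follows the same route as the paper. The paper likewise embeds $\gSmall_r$ into $\gLarge_r$ in Darboux coordinates by sending (in your notation) $X_{[1]}\mapsto X^i\theta_i$, $X\mapsto X^ip_i+\partial_jX^i\xi^j\theta_i$ and $\alpha\mapsto\pi^*\alpha$, and then passes to the twisted case because $\sigma$ is Maurer--Cartan in $\gSmall_r$. Your version is more careful than the paper's, which only asserts the identification chartwise: defining $\Psi(X)=\{\mathcal S,\Psi(X_{[1]})\}$ gives the chain-map property and the correct relative signs in $\Psi(X)$ automatically, and you also address global well-definedness.

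The one step to tighten is the uniqueness argument. Brackets with $\pi^*\Omega$ determine a function only modulo $\pi^*\Omega$, which is its own Poisson centralizer. So you should first note that $\{\Psi X,\Psi Y_{[1]}\}$ is again linear in the momenta, since the canonical bracket lowers momentum weight by one. In degree $-1$ this forces the form $Z^i\theta_i$, so your uniqueness claim applies. Then $\{\Psi X,\Psi Y\}=\Psi[X,Y]$ follows by applying $\{\mathcal S,\cdot\}$ and using $\{\mathcal S,\Psi X\}=0$.
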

\begin{proof}
    The $r$-Poisson structure on $T^*[r]T[1]M$ can be conveniently described in terms of local bundle coordinates associated with local coordinates on the base manifold $M$.
    Let 
    \begin{itemize}
    \item $\{x^i\}_{1\leq i \leq d}$ be degree $0$ local coordinates on $M$,
    \item $\{v_i\}_{1\leq i \leq d}$ be degree $1$ local coordinates on the fibers of $T[1]M\to M$ naturally induced by $\{x^i\}$,
    \item $\{p_i\}_{1\leq i \leq d}$ be degree $r-1$ local coordinates on the fibers of $T^*[r]T[1]M\to T[1]M$ naturally dual (i.e. conjugated to) $\{v^i\}$,
    \item $\{P^i\}_{1\leq i \leq d}$ be degree $r$ local coordinates on the fibers of $T^*[r]T[1]M\to T[1]M$ naturally dual (i.e. conjugated to) $\{x^i\}$.
    \end{itemize}
    The associative algebra of smooth functions $\mathcal{C}(T^*[r]T[1]M)$ is locally isomorphic to the algebra of polynomials in the graded variables $p,P,v$ with coefficients in smooth functions of the variables $x$, i.e.
\begin{equation*}
    \mathcal{C}(T^*[r]T[1]M)\cong_{\text{loc.}} 
    \Big\langle
        f^{j_1,\cdots,j_m~;k_1,\cdots,k_n}_{i_1,\cdots,i_\ell}(x^1,\dots)v^{i_1}\dots v^{i_\ell}p_{j_1}\dots p_{j_m} P_{k_1}\cdots P_{k_n}
    \Big\rangle~.
\end{equation*}
%
%Since the graded manifold $T^*[r]T[1]M$ carries a canonical degree-$r$ symplectic structure, and t
The above coordinates are Darboux coordinates for the canonical $r$-Poisson structure on $\mathcal{C}(T^*[r]T[1]M)$:
% is equipped with a degree $r$ Poisson bracket $\{\cdot,\cdot\}$, which is completely determined on generators $\{x^i, v^j, p_k, P_\ell\}$ by:
\begin{align*}
    \{x^i, P_j\} &= \delta^i_j = -\{P_j, x^i\},\\ 
    \{v^i, p_j\} &= \delta^i_j = -(-1)^{r-1} \{p_j, v^i\},\\
    \text{all other brackets} &= 0~.
\end{align*}
%Such equations embody the conjugacy relation between the coordinates $\{x^i\}$ and $\{P_j\}$, and the coordinates $\{v^i\}$ and $\{p_j\}$.
%
The Maurer-Cartan element $\mathcal{S}$ is written in local coordinates as 
        \begin{equation}
            \label{eq:canonical-MC-element-S}
            \mathcal{S} = \sum_{i=1}^d v^i P_i
            ~,
        \end{equation}
and the compatibility condition with the de Rham differential, i.e.  
    $\lbrace \mathcal{S}, \pi^*(\omega) \rbrace = \pi^*(\ddR\omega)$
    % for any $\omega \in \Omega^k(M)$
    , is retrieved from the identity $\pi^*(\d x^i) = v^i$.
    %\footnote{To verify this, it suffices to check the equation on generators of $\Omega(M)$, i.e., on polynomial functions $p(x^i)$ and 1-forms $\d x^i$, and then extend it to arbitrary forms by using the graded Leibniz rule for both sides.}
    On a local chart, for any fixed $r\geq 0$, $\Cartan$ can be seen as the subalgebra of $\gLarge_{r}$ generated by the following two types of polynomials
    \begin{align*}
        \Big\langle
            X^i(x) p_i ~,
            X^i(x) P_i + \frac{\partial X^i}{\partial x^j}(x) v^j p_i
        \Big \rangle.
    \end{align*}  
 The identification is given by
 \begin{align*}
     \iota_X &\leftrightarrow  X^i(x) p_i~,\\
     \Lie_X &\leftrightarrow  X^i(x) P_i+\frac{\partial X^i}{\partial x^j}(x) v^j p_i~,
 \end{align*}
 for  any $X=X^i\partial_i$.
    Similarly,
$\gSmall_r$ can be seen as the subalgebra of $\gLarge_{r}$ generated by the following three types of polynomials
    \begin{align*}
        \Big\langle
            X^i(x) p_i ~,
            X^i(x) P_i + \frac{\partial X^i}{\partial x^j}(x) v^j p_i~, \alpha_Iv^I
        \Big \rangle.
    \end{align*}  
    In other words, we have the following explicit inclusion of DGLAs:   
    \begin{equation*}
            \morphism{\iota}{\gSmall_{r}}
            {\gLarge_{r}}
            {\iota_X + \Lie_Y + \alpha}
            {
            X^i(x) p_i + \left( Y^i(x) P_i + \frac{\partial Y^i}{\partial x^j}(x) v^j p_i \right) + \alpha_I v^I~,
            }
    \end{equation*}
    for any $X = X^i \partial_i$ and $Y= Y^i \partial_i$ in $\X$ and $\alpha = \alpha_I dx^I \in \Omega$.
    Finally, since the given $\sigma \in \Omega^{r+1}_{\mathrm{cl}}(M)$ is a Maurer–Cartan element in $\gSmall_r$, the above inclusion extends directly to the twisted case.
\end{proof}

\begin{proposition}\label{prop:smallg-and-largeg-same-fiber}
    The two dgLie algebras $\gSmall_r$ and $\gLarge_r$ determine the same homotopy fiber, i.e.
    \begin{align*}
        \hoFib(\gSmall_{r,\sigma}^{\geq 0} \hookrightarrow \gSmall_{r,\sigma}) \cong
        \hoFib(\gLarge_{r,\sigma}^{\geq 0} \hookrightarrow \gLarge_{r,\sigma})
        =: \Courant[\sigma]{\,r{-}1}
        ~.
    \end{align*}
\end{proposition}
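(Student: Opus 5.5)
The plan is to reduce the statement directly to \autoref{lemma:Getzler-fibers-isomorphism}, applied with $\h=\gSmall_{r,\sigma}$ and $\g=\gLarge_{r,\sigma}$. By \autoref{lemma:gsmall-into-glarge}, $\gSmall_{r,\sigma}$ is a sub-DGLA of $\gLarge_{r,\sigma}$, including the twisted differential $\d_\sigma$. The inclusion clearly preserves degrees, so it restricts to an inclusion $\gSmall_{r,\sigma}^{\ge0}\hookrightarrow\gLarge_{r,\sigma}^{\ge0}$. This gives the commutative square \eqref{eq:diagramma-h-g}. The only hypothesis left to check is that $\gSmall_{r,\sigma}^{<0}=\gLarge_{r,\sigma}^{<0}$ as graded vector spaces, where the left-hand side is viewed inside the right-hand side via the embedding $\iota$. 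Once this is established, the lemma identifies the two Getzler $L_\infty$-structures on the common space. \autoref{prop:higher-courant-as-hofib} then identifies the right-hand fiber with $\Courant[\sigma]{\,r-1}$, which proves the claim.

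The main step is the degree count in $\gLarge_r=\mathcal{C}(T^*[r]T[1]M)[r]$. A local monomial $f(x)\,v^{I}p_{J}P_{K}$ has polynomial degree $|I|+(r-1)|J|+r|K|$. Its degree in $\gLarge_r$ is therefore
\[
|I|+(r-1)|J|+r|K|-r .
\]
This is negative exactly when $|I|+(r-1)|J|+r|K|<r$, and I will treat three cases.
\begin{itemize}
\item If $K\neq\emptyset$, the degree is at least $0$, so no such monomial is negative.
\item If $|J|\ge2$, the polynomial degree is at least $2r-2\ge r$ once $r\ge2$, so again the degree is nonnegative.
\item If $|J|=1$, negativity forces $I=\emptyset$. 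This gives exactly the elements $X^i(x)p_i$ of degree $-1$, i.e.\ $\iota_X$, i.e.\ $X_{[1]}\in\X[1]$.
\item If $J=\emptyset$, we get the forms $\alpha_Iv^I$ of form degree $k<r$, sitting in degree $k-r$.
\end{itemize}
Hence $\gLarge_r^{<0}\cong\X[1]\oplus\bigoplus_{k<r}\Omega^k[r]$. This is precisely $\gSmall_r^{<0}$, as read off from the explicit cochain complex describing $\gSmall_r$. Moreover, the embedding $\iota$ of \autoref{lemma:gsmall-into-glarge} hits exactly these generators. Since the conditions are pointwise in the coordinates, the local descriptions glue to a global equality of graded subspaces.

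The step I expect to be delicate is the boundary case $r=1$. There the coordinates $p_i$ have degree $0$, so a naive count would allow arbitrary polynomials in $p$ in degree $-1$. Here I would use the convention implicit in Zambon's construction: functions on $T^*[r]T[1]M$ are taken polynomial in the fibre coordinates, and one restricts to the relevant weight. Concretely, I would impose weight at most $1$ in $(p,P)$, the weight compatible with the Poisson bracket and with $\mathcal S$. Under this restriction the same count yields $\X[1]\oplus\Omega^0[1]$ in degree $-1$, matching $\Courant[\sigma]{0}$. The remaining point is to check that the Bernoulli-type derived-bracket formulas in the proof of \autoref{lemma:Getzler-fibers-isomorphism} only ever evaluate $D$ and the bracket on elements of $\g^{<0}$ and on $\d a$ for $a\in\g^{<0}$. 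All of these lie in $\h$, since $\h$ is a sub-DGLA, so the brackets computed in $\h$ and in $\g$ coincide, as the lemma asserts.
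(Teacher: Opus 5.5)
Your proposal is correct and is the paper's proof: reduce to Lemma~\ref{lemma:Getzler-fibers-isomorphism} via the inclusion of Lemma~\ref{lemma:gsmall-into-glarge}, then check that the negative-degree parts coincide, which the paper simply asserts to be $\bigl(\Omega^0[r]\oplus\cdots\oplus\Omega^{r-1}[1]\bigr)\oplus\X[1]$ in both cases. Your Darboux-coordinate degree count supplies the justification the paper omits, and your caveat at $r=1$ is a genuine subtlety the paper leaves implicit: there the degree-$0$ coordinates $p_i$ would otherwise put every $f(x)\,p_{i_1}\cdots p_{i_k}$ with $k\ge 2$ into $\gLarge_{1,\sigma}^{<0}$, so a weight-$\le 1$ restriction is needed.
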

\begin{proof}
    By \autoref{lemma:Getzler-fibers-isomorphism}, it suffices to verify that the two dg Lie algebras $\gSmall_{r,\sigma}$ and $\gLarge_{r,\sigma}$ agree in negative degrees. This is indeed the case, since in both instances the negative part is given by the same graded vector space:
    \[
        \bigl(\Omega^0[r] \oplus \Omega^1[r{-}1] \oplus \cdots \oplus \Omega^{r-1}[1]\bigr) \oplus \X[1].
    \]
\end{proof}    
In this way, we have proven \autoref{bigthm:smaller-model} from the Introduction.

\begin{remark}[Purely algebraic generalizations]\label{rem:outlook-algebraic-generalizations}
    A key outcome of the constructions developed above is the central role played by the Cartan calculus of differential forms and vector fields. 
    More precisely, the arguments only rely on the underlying \emph{algebraic} structure of the Cartan calculus, rather than on the specific features of the smooth geometric setting.
    It is therefore natural to expect that the entire picture admits a meaningful extension to more general algebraic contexts where a Cartan calculus is available.
    As a first step in this direction, one may consider an arbitrary Lie algebra $\g$ together with a $\g$-module $M$. 
    In this case, the associated Chevalley--Eilenberg complex $\CE(\g,M)$ naturally carries a Cartan calculus structure.
    Pushing this perspective further, one may work in the framework of $BV$-modules, where a Cartan calculus is likewise present and where a corresponding Rogers-type $L_\infty$-algebra has already been constructed in~\cite{Miti2025}.
    In this latter setting, the formalism also connects to areas beyond differential geometry, such as Hochschild cohomology and deformation quantization.
    See ~\cite{Kowalzig2022,Fiorenza2020} for an operadic framework for Cartan calculi in algebraic contexts providing a unifying perspective on these structures.
\end{remark}

%-------------------------------------------------+
\subsection{The canonical morphism between \texorpdfstring{$\Courant[]{\,r-1}$ and $\Courant[]{r}$}{higher Courant L-infinity-algebras}}
%-------------------------------------------------+
According to \autoref{def:higher-courant-L-infinity} and \autoref{prop:smallg-and-largeg-same-fiber}, one has that the $\sigma$-twisted higher Courant \(L_\infty\)-algebra \(\Courant[\sigma]{\,r-1}\) is a model for the homotopy fiber of the inclusion of the non-negative degree part into the full DGLA for the DGLA  \(\gSmall_{r,\sigma}\) of \autoref{def:smallg-sigma}:
$$
    \Courant[\sigma]{\,r-1} \cong \GetzFib[\gSmall_{r,\sigma}].
$$
Therefore, due to the universal property of homotopy fibers, any homotopy commutative square of $L_\infty$-morphisms 
\begin{displaymath}
   \begin{tikzcd}[column sep=large]
\gSmall_{r,\sigma}^{\ge 0} \ar[r,hook] \ar[d] & \gSmall_{r,\sigma} \ar[d] \\
\gSmall_{r+1}^{\ge 0} \ar[ur,Rightarrow,"h"] \ar[r,hook] & \gSmall_{r+1}.
\end{tikzcd} 
\end{displaymath}
 will induce a canonical (up to homotopy equivalence)  \(L_\infty\)-morphism between the homotopy fibers of the horizontal inclusions, i.e., between the higher Courant \(L_\infty\)-algebras \(\Courant[\sigma]{\,r-1}\) and \(\Courant{r}\). Clearly, a homotopy commutative diagram as above can be trivially obtained by choosing the vertical arrows to be the zero morphisms. This will correspond to the trivial morphism beween \(\Courant[\sigma]{\,r-1}\) and \(\Courant{r}\). As we are going to show, there is also a distinguished nontrivial diagram of this kind.  
% \antonio{Rephrasing: Un modo canonico per ottenere un morfismo $L_\infty$ tra le fibreo omotopiche $\GetzFib[\gSmall_{r,\sigma}]$ e $\GetzFib[\gSmall_{r+1}]$ è di esibire un quadrato che commuta a meno di un'omotopia fissata}
We begin by providing it in the special case $\sigma=0$. One has the following Lemma, whose proof is immediate.
\begin{lemma}\label{lemma:Phi1}
    Let $\gSmall_r$ and $\gSmall_{r+1}$ be the DGLAs introduced in \autoref{def:smallg-sigma}. Then the map $\Phi_1\colon \gSmall_r\to \gSmall_{r+1}$ defined by 
    \[
\begin{tikzcd}
0 \ar[r]\ar[d] &
\Omega^0 \ar[r,"\ddR"]\ar[d,"\ddR"] &
\Omega^1 \ar[r,"\ddR"]\ar[d,"\ddR"] &
\Omega^2 \ar[r,"\ddR"] &
\cdots \ar[r,"\ddR\oplus 0"] &
\Omega^{r-1}\oplus {\X} \ar[r,"\ddR\oplus \mathrm{id}"]\ar[d,"\ddR\oplus \mathrm{id}"] &
\Omega^{r}\oplus {\X} \ar[r,"\ddR+0"]\ar[d,"\ddR\oplus \mathrm{id}"] &
\Omega^{r+1} \ar[d,"\ddR"] \ar[r,"\ddR"] &\cdots
\\
\Omega^{0} \ar[r,"\ddR"] &
\Omega^{1} \ar[r,"\ddR"] &
\Omega^{2} \ar[r,"\ddR"] &
\cdots &
\cdots \ar[r,"\ddR\oplus 0"] &
\Omega^{r}\oplus {\X} \ar[r,"\ddR\oplus \mathrm{id}"] &
\Omega^{r+1}\oplus {\X} \ar[r,"\ddR+0"] &
\Omega^{r+2}\ar[r,"\ddR"]&\cdots
\end{tikzcd}
~,
\]
i.e., explicitly,
\begin{displaymath}
    \Phi_1(\omega) = \ddR\omega~, \quad
    \Phi_1(X_{[1]}) = X_{[1]}~, \quad
    \Phi_1(X) = X~,
\end{displaymath}
for any differential form $\omega \in \Omega$ and vector field $X \in \X$,
is a morphism of chain complexes.
\end{lemma}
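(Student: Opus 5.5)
Since $\Phi_1$ is $\mathbb{R}$-linear by construction, the claim reduces to checking $\Phi_1\circ \d_{\gSmall_r} = \d_{\gSmall_{r+1}}\circ \Phi_1$ on each homogeneous summand of $\gSmall_r=\mathfrak{C}\ltimes\Omega[r]$. I will first make sure $\Phi_1$ has degree $0$: a form $\omega\in\Omega^k$ sits in degree $k-r$ in $\gSmall_r$, and $\ddR\omega\in\Omega^{k+1}$ sits in degree $(k+1)-(r+1)=k-r$ in $\gSmall_{r+1}$. Likewise $X_{[1]}$ and $X$ sit in degrees $-1$ and $0$ in both algebras, so $\Phi_1$ is well defined as a degree-$0$ map. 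I will then test the three kinds of generators separately, using the untwisted differential $\d$ of $\gSmall_r$ ($\sigma=0$ here): $\d\omega=\ddR\omega$, $\d X_{[1]}=X$, and $\d X=0$.

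On forms, $\Phi_1(\d\omega)=\ddR\ddR\omega=0$ and $\d\Phi_1(\omega)=\ddR\ddR\omega=0$, by \eqref{eq:CartanDsquared}. For a vector field in degree $0$, both sides vanish, because $\d X=0$ in $\mathfrak{C}$ and $\Phi_1(X)=X$. For $X_{[1]}$, we have $\Phi_1(\d X_{[1]})=\Phi_1(X)=X$ and $\d\Phi_1(X_{[1]})=\d X_{[1]}=X$.

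The one step needing care is the bookkeeping in the diagram of \autoref{lemma:Phi1}, where the components $\Omega^{r-1}\oplus\X$ and $\Omega^r\oplus\X$ are displayed with mixed differentials $\ddR\oplus\mathrm{id}$. I will check that the vector-field summand is carried identically, so that the $\mathrm{id}\colon\X[1]\to\X$ part of the differential in $\gSmall_r$ matches the one in $\gSmall_{r+1}$. I will also check that no cross terms between forms and vector fields appear in the untwisted differential. Neither check involves a real obstacle, which is consistent with the paper describing the proof as immediate. The genuine difficulty comes only later, in the quadratic component and the $\sigma$-twist, since $\Phi_1$ does not preserve brackets.
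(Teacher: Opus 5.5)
Your argument is correct and is exactly the direct verification the paper has in mind when it calls this lemma immediate: you check $\Phi_1$ against the untwisted differentials summand by summand, and both sides agree in every case (forms: both vanish by $\ddR^2=0$; $X_{[1]}$: both give $X$; $X$: both vanish). Your degree count and your observation that the lemma concerns the untwisted ($\sigma=0$) algebras, so that no cross terms such as $\iota_X\sigma$ appear, are also accurate.
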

The morphism of cochain complexes  $\Phi_1$ is not a DGLA morphism, since the de Rham differential does not generally commute with contractions:
    \begin{displaymath}
       \Phi_1\{X_{[1]},\omega\}_{\gSmall_{r}}\neq \{\Phi_1(X_{[1]}),\Phi_1(\omega)\}_{\gSmall_{r+1}}~,
    \end{displaymath}
 yet this can be cured, as shown by the following.   

\begin{lemma}\label{lem:Linfty-morphism-smallgr-to-smallgr+1}
   The morphism of cochain compelxes $\Phi_1\colon \gSmall_r\to \gSmall_{r+1}$ from Lemma \ref{lemma:Phi1}  is the linear component of an \(L_\infty\)-morphism \(\Phi: \gSmall_r \to \gSmall_{r+1}\) which 
  is compatible with   
 non-negative truncations, thus giving a strictly commutative diagram of $L_\infty$-morphisms
    \begin{displaymath}
        \begin{tikzcd}[column sep=large]
            \gSmall_r^{\ge 0} \ar[r,hook] \ar[d,"\Phi\vert_{{\ge 0}}"'] & \gSmall_r \ar[d,"\Phi"] \\
            \gSmall_{r+1}^{\ge 0} \ar[r,hook] & \gSmall_{r+1}.
        \end{tikzcd}
    \end{displaymath}
\end{lemma}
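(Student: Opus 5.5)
The plan is to add to $\Phi_1$ a single quadratic correction $\Phi_2$ and to check that the $L_\infty$-morphism identities close at arity three. First I locate where $\Phi_1$ fails to be bracket-preserving. By Lemma~\ref{lemma:Phi1} and the definition of $\lbrace\blank,\blank\rbrace_{\gSmall_r}$:
\begin{itemize}
\item On pairs of vector-field generators, $\Phi_1$ is the identity on $\mathfrak{C}$, so $\Phi_1\lbrace X,Y\rbrace=\lbrace X,Y\rbrace$ and $\Phi_1\lbrace X,Y_{[1]}\rbrace=\lbrace X,Y_{[1]}\rbrace$. Both $\lbrace X_{[1]},Y_{[1]}\rbrace$ and $\lbrace\omega,\eta\rbrace$ vanish.
\item On $(X,\omega)$ we have $\Phi_1(\Lie_X\omega)=\ddR\Lie_X\omega=\Lie_X\ddR\omega=\lbrace X,\Phi_1\omega\rbrace$ by \eqref{eq:CartanLievsD}.
\item The only defect is on $(X_{[1]},\omega)$: $\Phi_1\lbrace X_{[1]},\omega\rbrace-\lbrace X_{[1]},\ddR\omega\rbrace=\ddR\iota_X\omega-\iota_X\ddR\omega$.
\end{itemize}

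I therefore define $\Phi_2\colon \Lambda^2\gSmall_r\to\gSmall_{r+1}$, of degree $-1$, to vanish on all pairs except
\[
\Phi_2(X_{[1]},\omega)=\epsilon\,\iota_X\omega\in\Omega[r+1],
\]
extended by graded skew-symmetry, with a sign $\epsilon=\pm1$ (possibly depending on $|\omega|$) to be fixed. The degree count works: the input has degree $\deg\omega-r-1$, and the output has degree $\deg\omega-1-(r+1)=\deg\omega-r-2$. I take $\Phi_n=0$ for $n\ge3$.

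The arity-two identity reads $\d\Phi_2(a,b)\pm\Phi_2(\d a,b)\pm\Phi_2(a,\d b)=\Phi_1\lbrace a,b\rbrace-\lbrace\Phi_1a,\Phi_1b\rbrace$. On $(X_{[1]},\omega)$ the term $\Phi_2(\d X_{[1]},\omega)=\Phi_2(X,\omega)$ vanishes. The left side is then $\epsilon(\ddR\iota_X\omega\pm\iota_X\ddR\omega)$, and I fix $\epsilon$ so that this equals $\ddR\iota_X\omega-\iota_X\ddR\omega$. There is an apparent tension: Cartan's formula \eqref{eq:CartanDvsIota} suggests a plus sign, so the Koszul signs coming from the shift $\Omega[r]$ must supply the needed minus. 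This sign bookkeeping is the first thing to settle carefully. If it turns out the signs force an extra term, I would also allow $\Phi_2(X,\omega)$ to be nonzero and proportional to $\iota$-type expressions. On every other pair of generators both sides vanish.

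The arity-three identity, written symbolically, is
\[
\sum\pm\Phi_2(\lbrace a,b\rbrace,c)=\sum\pm\lbrace\Phi_1a,\Phi_2(b,c)\rbrace+\tfrac12\sum\pm\lbrace\Phi_2,\Phi_2\rbrace .
\]
Every $\Phi_2$ takes values in $\Omega[r+1]$, which is abelian, so the $\lbrace\Phi_2,\Phi_2\rbrace$ terms vanish. The only nontrivial triples are:
\begin{itemize}
\item $(X,Y_{[1]},\omega)$, which reduces to \eqref{eq:CartanLievsIota}: $\iota_{[X,Y]}\omega=\Lie_X\iota_Y\omega-\iota_Y\Lie_X\omega$;
\item $(X_{[1]},Y_{[1]},\omega)$, which reduces to \eqref{eq:CartanIotasquared};
\item $(X_{[1]},\omega,\eta)$, where all terms vanish.
\end{itemize}
For arity $n\ge4$ every term involves either $\Phi_{\ge3}=0$ or brackets of forms, so all terms vanish. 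I expect the main obstacle to be the consistent sign conventions, namely Koszul signs for the shifts and the paper's convention for $L_\infty$-morphisms; the algebra itself is just the Cartan identities.

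Finally, compatibility with truncations. $\Phi_1$ has degree $0$, so it maps $\gSmall_r^{\ge0}$ into $\gSmall_{r+1}^{\ge0}$. Moreover $\Phi_2$ vanishes unless one argument is $X_{[1]}$, which has degree $-1$. Hence the restriction $\Phi\vert_{\ge0}$ is the strict DGLA morphism $\Phi_1\vert_{\ge0}$, and the square commutes on the nose.
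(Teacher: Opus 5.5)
Your proposal follows the paper's proof essentially verbatim. It uses the same quadratic correction $\Phi_2(X_{[1]},\omega)=\iota_X\omega$ extended by graded skew-symmetry, sets $\Phi_k=0$ for $k\ge 3$, reduces to the same two nontrivial arity-three cases settled by Cartan identities, and observes, as the paper does, that $\Phi_2$ vanishes on $\gSmall_r^{\ge 0}$. The sign you left open is simply $\epsilon=+1$, independent of $|\omega|$: the needed minus comes from the factor $(-1)^{|X_{[1]}|}=-1$ in front of $\Phi_2(X_{[1]},\ddR\omega)$ in the arity-two identity (equivalently, $\iota_X$ has even degree $-2$ as a map $\Omega[r]\to\Omega[r+1]$, so its graded commutator with $\ddR$ is $\ddR\iota_X-\iota_X\ddR$), and no extra component $\Phi_2(X,\omega)$ is needed.
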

\begin{proof}
We define the binary component $    \Phi_2:\gSmall_r\otimes \gSmall_r \to \gSmall_{r+1}[1]$ as
\begin{displaymath}
    \Phi_2(X_{[1]},\omega)=\iota_X\omega,\qquad
    \Phi_2(\omega,X_{[1]})=-(-1)^{|\omega|} \iota_X\omega~,
\end{displaymath}
for any vector field \(X\in \X\) and differential form \(\omega\in \Omega\), and zero otherwise. Here $|\omega| = \deg(\omega)-r$ is the shifted degree of the differential form \(\omega\).
Then $\Phi=(\Phi_1,\Phi_2,0,0,\dots)$ is an $L_\infty$-morphism \(\g_r\to \g_{r+1}\). This reduces to checking the three identities \eqref{eq:Linfty-morphism-DGLAs-arity-1}, \eqref{eq:Linfty-morphism-DGLAs-arity-2} and \eqref{eq:Linfty-morphism-DGLAs-arity-3} from \autoref{rmk:Linfty-morphism-DGLAs}).
%    According to \autoref{def:Linfty-morphism-DGLAs} (see also \autoref{rmk:Linfty-morphism-DGLAs}), one can check the three identities \eqref{eq:Linfty-morphism-DGLAs-arity-1}, \eqref{eq:Linfty-morphism-DGLAs-arity-2} and \eqref{eq:Linfty-morphism-DGLAs-arity-3} 
 Equation    \eqref{eq:Linfty-morphism-DGLAs-arity-1} is the fact that $\Phi_1$ is a morphism of cochain complexes.
    by inspectiong on any combination of the entries with $X_i \in \X(M)$ and $\omega_j \in \Omega(M)$.
%    In the case of \eqref{eq:Linfty-morphism-DGLAs-arity-1}, one has 
%\begin{align*}
%v \colon\quad
%& \Phi_1 \circ \d_{\gSmall_{r}} (v)
%  = \d_{\gSmall_{r+1}} \circ \Phi_1 (v)
%\\[0.5em]
%\omega \colon\quad
%& \Phi_1(\ddR \omega)
%  = \d_{\gSmall_{r+1}} (\ddR \omega)
%\\[0.5em]
%\Lie_X \colon\quad
%& \Phi_1(0)
%  = \d_{\gSmall_{r+1}} (\Lie_X)
%\\[0.5em]
%\iota_X \colon\quad
%& \Phi_1(\Lie_X)
%  = \d_{\gSmall_{r+1}} (\iota_X)
%\end{align*}
%    which are all trivially checked.     
    Equation \eqref{eq:Linfty-morphism-DGLAs-arity-2} is the identity
    \[
    \Phi_1(\lbrace v_1,v_2\rbrace_{\gSmall_r}) - \lbrace \Phi_1(v_1),\Phi_1(v_2)\rbrace_{\gSmall_{r+1}} = \d_{\gSmall_{r+1}} \Phi_2(v_1,v_2) + \Phi_2(\d_{\gSmall_r} v_1,v_2) + (-1)^{|v_1|} \Phi_2(v_1,\d_{\gSmall_r} v_2)
    \]
    for any $v_1,v_2\in \gSmall_r$. Specializing the entries $v_i$ to be in  $\X$, in $\X[1]$, or in $\Omega$, and using the graded antisymmetry of the equation, we find the following set of identities to be satisfied:
\begin{align*}
(\omega_1,\omega_2) \colon\quad & \Phi_1(0) - \lbrace \ddR \omega_1,\ddR \omega_2\rbrace_{\gSmall_{r+1}} = 0
\\[0.5em]   
(X_{[1]},\omega) \colon\quad &
\Phi_1(\iota_X \omega) - \lbrace X_{[1]},\ddR \omega\rbrace_{\gSmall_{r+1}} = \d_{\gSmall_{r+1}} (\iota_X \omega) + (-1)^{|\iota_X|} \iota_X \ddR \omega
\\[0.5em]
(X_{[1]},Y_{[1]}) \colon\quad &
\Phi_1(0) - \lbrace X_{[1]},Y_{[1]}\rbrace_{\gSmall_{r+1}} = 0
\\[0.5em]
(X,\omega) \colon\quad &
\Phi_1(\Lie_X \omega) - \lbrace X,\ddR \omega\rbrace_{\gSmall_{r+1}} = 0
\\[0.5em]
(X,Y_{[1]}) \colon\quad &
\Phi_1\left(({[X,Y]_\X})_{[1]}\right) - \lbrace X,Y_{[1]}\rbrace_{\gSmall_{r+1}} = 0
\\[0.5em]
(X,Y) \colon\quad &
\Phi_1([X,Y]_\X) - \lbrace X,Y\rbrace_{\gSmall_{r+1}} = 0.
\end{align*}
These are all readily verified using Cartan calculus identities.
 %  which are again all trivially checked except for the case \((\Lie_X,\omega)\), which follows from commutation rule of $\ddR$ and $\Lie_X$ (equation \eqref{eq:CartanLievsD}).
 Finally, the equation \eqref{eq:Linfty-morphism-DGLAs-arity-3}  is 
    \begin{align}
        \Phi_2&(\lbrace v_1,v_2\rbrace_{\gSmall_r},v_3) 
        -(-1)^{|v_2||v_3|} \Phi_2(\lbrace v_1,v_3\rbrace_{\gSmall_r},v_2)
        +(-1)^{|v_1|(|v_2|+|v_3|)} \Phi_2(\lbrace v_2,v_3\rbrace_{\gSmall_r},v_1)
        \\
        =& (-1)^{|v_1|}\lbrace \Phi_1(v_1),\Phi_2(v_2,v_3)\rbrace_{\gSmall_{r+1}}-
        (-1)^{|v_2|+|v_1||v_2|} \lbrace \Phi_1(v_2),\Phi_2(v_1,v_3)\rbrace_{\gSmall_{r+1}} +
        (-1)^{|v_3|(|v_1|+|v_2|)} \lbrace \Phi_1(v_3),\Phi_2(v_1,v_2)\rbrace_{\gSmall_{r+1}}~.
        \nonumber
    \end{align}
    for any $v_1,v_2\,v_3\in \gSmall_r$. Specializing the entries $v_i$ to be in  $\X$, in $\X[1]$ or in $\Omega$, this equation is immediately seen to be satisfied in all cases but $(v_1,v_2,v_3)=  (X_{[1]},Y_{[1]},\omega)$ and  $(v_1,v_2,v_3)=  (X,Y_{[1]},\omega)$, that require a little computation.
 In the first case, we find the identity 
    \begin{align*}
        \cancel{\Phi_2(0,\omega)} 
        &-(-1)^{|\omega|} \Phi_2(\iota_{X}\omega,Y_{[1]}) 
        - (-1)^{|\omega|} \Phi_2(\iota_{Y}\omega,X_{[1]}) 
        \\
        =&~ 
        - \lbrace X_{[1]},\iota_{Y} \omega\rbrace_{\gSmall_{r+1}} 
        - \lbrace Y_{[1]},\iota_{X} \omega\rbrace_{\gSmall_{r+1}} 
        \cancel{+ \lbrace \ddR \omega,0\rbrace_{\gSmall_{r+1}}}~,
    \end{align*}
    that is the identity
    \begin{displaymath}
        -(-1)^{2|\omega|+1} \iota_Y \iota_{X} \omega 
        -(-1)^{2|\omega|+1} \iota_{X} \iota_{Y} \omega = 
        -\iota_{X} \iota_{Y} \omega 
        - \iota_{Y} \iota_{X} \omega~,
    \end{displaymath}
    which is manifestly satisfied.
    %and it holds true due to Cartan's equation \eqref{eq:CartanIotasquared}.
    %The forth case \((\Lie_{X_1},\iota_{X_2},\omega_3)\) reads
     In the second case, we find the identity 
    \begin{align*}
        \Phi_2({[X,Y]_\X}_{[1]},\omega) 
        &-(-1)^{|\omega|} \Phi_2(\Lie_{X}\omega,Y_{[1]})
        + \cancel{\Phi_2(\iota_{Y}\omega,X)}
        \\
        =&~
        \lbrace {X},\iota_{Y} \omega\rbrace_{\gSmall_{r+1}} 
        - \cancel{\lbrace {Y}_{[1]},0\rbrace_{\gSmall_{r+1}}} 
        + \cancel{\lbrace \ddR\omega,0\rbrace_{\gSmall_{r+1}}}~,
    \end{align*}
 that is the identity
    \begin{displaymath}
        \iota_{[X,Y]_\X} \omega 
        + \iota_{Y} \Lie_{X} \omega - 
        \Lie_{X} \iota_{Y} \omega =0~,
    \end{displaymath}
    which is nothing but  Cartan's identity \eqref{eq:CartanLievsIota}.
    
Finally, $\Phi$ restricts to an \(L_\infty\)-morphism \(\Phi\vert_{{\ge 0}} : \g_{r}^{\ge 0} \to \g_{r+1}^{\ge 0}\) %given by $(\Phi_1 \vert_{\gSmall_{r}^{\ge 0}}, 0,\dots)$
since, for any $v= X + \omega \in \gSmall_{r}^{\ge 0}$, i.e., with $\deg(\omega)\geq r$, one has
\begin{displaymath}
    \Phi_1(v) = \ddR \omega + X \in \gSmall_{r+1}^{\ge 0}~,
\end{displaymath}
and
\begin{displaymath}
    \Phi_2(v,-) = 0~,
\end{displaymath}
when the second entry is in $\gSmall_{r+1}^{\ge 0}$.
   % In other terms, the above diagram commutes on the nose.
\end{proof}
Summing up, we have proven \autoref{bigthm:morphism-twistedCourant-untwistedCourant} from the Introduction in the special case $\sigma=0$. We address the general case in the following subsection.

%Wrapping up, we have proven the following.
%\begin{theorem}
%    There exists a canonical \(L_\infty\)-morphism
 %   \[
  %  \Courant{\,r-1} \longrightarrow \Courant{r}~.
   % \]
%\end{theorem}

%-------------------------------------------------+
\subsection{The canonical morphism between \texorpdfstring{$\Courant[\sigma]{\,r-1}$ and $\Courant[]{r}$}{twisted and untwisted higher Courant L-infinity-algebras}}\label{ssec:theoremB}
%-------------------------------------------------+
Recall that any closed $r+1$-form \(\sigma\in \Omega^{r+1}_{\mathrm{cl}}\) defines a Maurer-Cartan element of the DGLA \(\g_{r}\) hence it can be used to define a twisted DGLA \(\g_{r,\sigma}\).
A Maurer-Cartan element can also be used to twist a given morphism of DGLAs, as recalled in the following classical lemma.
\begin{lemma}[Twisting \(L_\infty\)-morphisms]
    Let \(\Phi:\g\to \h\) be an \(L_\infty\)-morphism between two DGLAs \((\g,\d_\g,\lbrace-,-\rbrace_\g)\) and \((\h,\d_\h,\lbrace-,-\rbrace_\h)\). For any Maurer-Cartan element \(\mu\in \g\), set
    \[
    \Phi(\mu):=\sum_{k\ge 1}\frac{1}{k!}\,\Phi_k\Big(\underbrace{\mu,\dots,\mu}_{k}\Big).
    \]
    Then \(\Phi(\mu)\) is a Maurer-Cartan element of \(\h\) and the maps
    \[
    \big(\Phi_\mu\big)_k(-)=\sum_{j\ge 0}\frac{1}{j!}\,\Phi_{j+k}\Big(\underbrace{\mu,\dots,\mu}_{j},-\Big),
    \]
    are the $k$-ary components of an \(L_\infty\)-morphism \(\Phi_\mu:\g_\mu\to \h_{\Phi(\mu)}\) between the twisted DGLAs \((\g,\d_\g+\lbrace \mu,-\rbrace_\g,\lbrace-,-\rbrace_\g)\) and \((\h,\d_\h+\lbrace \Phi(\mu),-\rbrace_\h,\lbrace-,-\rbrace_\h)\).
\end{lemma}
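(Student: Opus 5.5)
The cleanest route is the coalgebraic description of $L_\infty$-morphisms. Regard $\Phi$ as a morphism of dg coalgebras $F\colon \bar S(\g[1])\to \bar S(\h[1])$, i.e. a coalgebra map with $F\circ Q_\g = Q_\h\circ F$, where $Q_\g,Q_\h$ are the codifferentials encoding $\d$ and the bracket. The Maurer--Cartan elements are exactly those $\mu$ for which the group-like element $e^\mu=\sum_k \mu^{\odot k}/k!$ in the completed symmetric coalgebra satisfies $Q_\g(e^\mu)=0$. Nilpotency, via the tacit tensoring with $\mathfrak m_A$ fixed in the conventions, guarantees that all the series involved converge.

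\textbf{Step 1: $\Phi(\mu)$ is Maurer--Cartan.} A coalgebra morphism sends group-like elements to group-like elements, so $F(e^\mu)=e^{\Phi(\mu)}$. Its linear component is precisely $\sum_k \frac1{k!}\Phi_k(\mu,\dots,\mu)$. Then
\[
Q_\h(e^{\Phi(\mu)})=Q_\h F(e^\mu)=F Q_\g(e^\mu)=0,
\]
so $\Phi(\mu)\in\MC(\h)$.

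\textbf{Step 2: the twisted codifferentials.} Consider the coalgebra automorphism $T_\mu(x)=e^\mu\odot x$, the translation by $\mu$. For a DGLA $\g$ the conjugate $T_\mu^{-1}Q_\g T_\mu$ is again a codifferential. Since $\mu$ is MC, its constant term vanishes, and the remaining Taylor coefficients are exactly $\d_\g+\{\mu,-\}$ and $\{-,-\}$, because $\{\mu,\mu,-\}$-type terms vanish for a strict DGLA. So it is the codifferential of $\g_\mu$, and likewise $T_{\Phi(\mu)}^{-1}Q_\h T_{\Phi(\mu)}$ is the codifferential of $\h_{\Phi(\mu)}$.

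\textbf{Step 3: the twisted morphism.} Define $F_\mu:=T_{\Phi(\mu)}^{-1}\,F\,T_\mu$. It is a coalgebra morphism and intertwines the twisted codifferentials by Step 2. Using $F(e^\mu\odot x)=e^{\Phi(\mu)}\odot(\ldots)$, its corestriction to $\h[1]$ in arity $k$ is
\[
\sum_j \frac1{j!}\Phi_{j+k}(\mu^{j},-).
\]
This is the stated formula for $(\Phi_\mu)_k$.

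\textbf{Main obstacle.} The delicate point is the identity $\pi_1 F(e^\mu\odot x)$, read as the Taylor expansion above, together with the fact that $T_{\Phi(\mu)}^{-1}$ does not change the linear projection once $F(e^\mu)=e^{\Phi(\mu)}$ has been factored out. I would verify this combinatorially from the explicit formula $F=\sum_n \frac1{n!}\Phi^{\odot n}\circ\Delta^{(n)}$ applied to $e^\mu\odot x$, splitting the $\mu$'s among the tensor factors. Signs are harmless because $\mu$ has degree $0$ in $\g[1]$.

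Alternatively, one can verify the three arity identities of \autoref{rmk:Linfty-morphism-DGLAs} directly, by expanding in powers of $\mu$ and using the MC equation. The coalgebra argument avoids that bookkeeping.
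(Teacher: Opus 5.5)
The paper gives no proof of this lemma; it is recalled as classical and applied only to the two-term morphism $\Phi$ of \autoref{lem:Linfty-morphism-smallgr-to-smallgr+1}, where every sum is finite. Your coalgebraic argument is the standard proof and is correct in substance, but one step is false as written and has to be set up in a different coalgebra.

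\textbf{The translation is not a morphism of $\bar S(\g[1])$.} The map $T_\mu(x)=e^\mu\odot x$ does not commute with the reduced coproduct you start from. For $v\in\g[1]$ one has $\bar\Delta(v)=0$, while $\bar\Delta(T_\mu v)=\mu\otimes v+v\otimes\mu+\cdots\neq 0$. It is a coalgebra automorphism only of the counital completed coalgebra $\widehat S(\g[1])=\prod_{n\ge 0}S^n(\g[1])$, where $e^\mu$ is group-like and $\Delta$ is multiplicative.

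\textbf{The fix.} Extend $Q_\g,Q_\h$ by $Q(1)=0$ and $F$ by $F(1)=1$; this is also how $Q_\g(e^\mu)=0$ in your Step 1 must be read. Do all conjugations in the counital setting, and then descend to the reduced coalgebras:
\begin{itemize}
\item $T_\mu^{-1}Q_\g T_\mu(1)=e^{-\mu}\odot Q_\g(e^\mu)=0$ is exactly the Maurer--Cartan equation for $\mu$;
\item $F_\mu(1)=e^{-\Phi(\mu)}\odot F(e^\mu)=1$ is exactly your Step 1.
\end{itemize}
A coderivation killing $1$, and a counit-preserving coalgebra map fixing $1$, both restrict to the augmentation ideal compatibly with $\bar\Delta$. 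This gives the twisted codifferentials and $\Phi_\mu$ on $\bar S$.

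\textbf{Your ``main obstacle'' is immediate.} No combinatorial splitting of the $\mu$'s is needed.
\begin{itemize}
\item $\pi_1F(y)=\sum_n F^1_n(\pi_n y)$ and $\pi_{j+k}(e^\mu\odot x)=\frac{1}{j!}\mu^{\odot j}\odot x$ for $x\in S^k(\g[1])$. Together these give the stated coefficients.
\item $\pi_1(e^{-\Phi(\mu)}\odot z)=\pi_1(z)-\Phi(\mu)\,\epsilon(z)$, and $\epsilon(F(e^\mu\odot x))=\epsilon(e^\mu\odot x)=0$ because $F$ preserves the counit.
\item Since $\epsilon\circ Q=0$ for any coderivation, the same reasoning gives the Taylor coefficients of the twisted codifferential in Step 2.
\end{itemize}

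\textbf{Signs.} Your remark that signs are harmless is right in the shifted symmetric picture. To land on the paper's unsigned formula for the skew-symmetric $\Phi_k$, use the d\'ecalage $F^1_k=s\circ\Phi_k\circ(s^{-1})^{\otimes k}$. The opposite normalization puts a sign $(-1)^{k(k-1)/2}$ on $\Phi_k(\mu,\dots,\mu)$, and the paper's arity-two identity already forces the unsigned choice at $k=2$.

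Compared with checking the identities of \autoref{rmk:Linfty-morphism-DGLAs} order by order in $\mu$, your route handles all arities at once. It also derives both conclusions of the lemma from a single conjugation argument.
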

Specializing this to the case we are interested in, for any given $\sigma \in \Omega^{r+1}_{\mathrm{cl}}$, one obtains an $L_\infty$-morphism 
\[
\Phi_\sigma:\gSmall_{r,\sigma}\to \gSmall_{r+1,\Phi(\sigma)}
\]
 by twisting the $L_\infty$-morphism \(\Phi:\g_{r}\to \g_{r+1}\) introduced in \autoref{lem:Linfty-morphism-smallgr-to-smallgr+1}.
 One has
 \[
 \Phi(\sigma)=\Phi_1(\sigma)+\frac{1}{2}\Phi_2(\sigma,\sigma)=\ddR \sigma 0,
 \]
 hence $\gSmall_{r+1,\Phi(\sigma)}=\gSmall_{r+1}$. The $k$-ary components of $\Phi_\sigma$ are given by:
\begin{align*}
    \big(\Phi_\sigma\big)_1(v) 
    &= \Phi_1(v) + \Phi_2(\sigma,v)~, \\
    \big(\Phi_\sigma\big)_2(v_1,v_2) 
    &= \Phi_2(v_1,v_2), \\
    \big(\Phi_\sigma\big)_k(v_1,\dots,v_k) 
    &= 0~, \quad k\ge 3~,
\end{align*}
so, just like $\Phi$, the $L_\infty$-morphism  $\Phi_\sigma$ restricts to an $L_\infty$-morphism  from $\gSmall_{r,\sigma}^{\ge 0}$ to $\gSmall_{r+1}^{\geq 0}$. In other words, we have a strictly commutative diagram of $L_\infty$-morphisms 
\[
\begin{tikzcd}
\g_{r,\sigma}^{\ge 0} \ar[r,hook] \ar[d, "\Phi\vert_{\ge 0}"'] &
\g_{r,\sigma} \ar[d, "\Phi_\sigma"] \\
\g_{r+1}^{\ge 0} \ar[r,hook] &
\g_{r+1} .
\end{tikzcd}
\]
By the universal property of homotopy fibers, we therefore get \autoref{bigthm:morphism-twistedCourant-untwistedCourant} from the Introduction. Notice that, by construction, the linear part of the $L_\infty$-morphism $\Courant[\sigma]{r-1} \to \Courant[]{r}$ obtained this way is (homotopy equivalent) to the morphism of cochain complexes \eqref{eq:courant-linear}.

%Via the universal property of homotopy fibers, one can conclude the following.
%%
%\begin{theorem}[Morphism from twisted  to untwisted Courant $L_\infty$-algebras]
%    \label{thm:morphism-Twisted-UntwistedCourant}
%    There is an \(L_\infty\)-commutative diagram
%    \begin{displaymath}
%       \begin{tikzcd}
%            \Courant[\sigma]{r-1} \ar[d]\ar[r] & 
%            \gSmall_{r,\sigma}^{\ge 0} \ar[r,hook]\ar[d,"\Phi\vert_{\ge 0}"] &
%            \gSmall_{r,\sigma} \ar[d,"\Phi"]
%            \\
%            \Courant[]{r} \ar[r] &
%            \g_{r{+}1}^{\ge 0} \ar[r,hook] %\ar[ur,Rightarrow] 
%            &
%            \g_{r{+}1},
%        \end{tikzcd}
%    \end{displaymath}   
%where \(\Phi\) is a $2$-term  $L_\infty$-morphism, \(\Courant[\sigma]{r-1}\cong \hoFib(\g_{r,\sigma}^{\ge 0}\hookrightarrow \g_{r,\sigma})\) and \(\Courant{r}\cong \hoFib(\g_{r+1}^{\ge 0}\hookrightarrow \g_{r+1})\).
%\end{theorem}

%=================================================#
\section{A morphism between the multisymplectic \texorpdfstring{$L_\infty$}{L-infinity}-algebra and the twisted higher Courant \texorpdfstring{$L_\infty$}{L-infinity}-algebra}
\label{sec:RogerstoCourant}
%=================================================#

%Let $M$ be a smooth manifold and $\sigma \in \Omega^{r+2}_{\textrm{cl}}(M)$ be a pre-$(r+1)$-plectic form with $r\geq0$ integer.
Consider now the Lie algebra $\X_{\ham,\sigma}(M)$  of Hamiltonian vecor fields for the pre-$(r+1)$-plectic manifold $(M,\sigma)$. Denote by $\Lie\colon  \X_{\ham,\sigma}(M) \to \g_{r+1,\sigma}^{\ge 0}$ the canonical embedding induced by the inclusion $\X_{\ham,\sigma}(M)\subseteq \X(M)$, and by $\iota: \X_{\ham,\sigma}(M) \to \g_{r+1,\sigma}[-1]$ the map sending a Hamiltonian vector field $X$ to the degree $-1$ element $X_{[1]}$ in $\g_{r+1,\sigma}$. Since $\mathcal{L}$ is a DGLA morphis, 
we can consider the following diagram of $L_\infty$-morphisms:
\begin{equation}\label{eq:diag-Rogers-DGLA}
    \begin{tikzcd}
        \X_{\ham,\sigma}(M) \ar[r,"\iota_{\infty}\sigma"] \ar[d,"\Lie"'] &
        \Trunc\Big(\Omega(M)[r+1]\Big) \ar[d,hook]
        \\
        \g_{r+1,\sigma}^{\ge 0} \ar[r,hook] &%\ar[ur,Rightarrow] &
        \g_{r+1,\sigma}
    \end{tikzcd}
\end{equation}
where $\X_{\ham,\sigma}(M)$ is the Lie algebra of Hamiltonian vector fields on $M$ (see \autoref{def:hamiltonian-pairs}), $\Trunc(\Omega(M)[r+1])$ is the abelian DGLA (i.e., the chain complex) given by the canonical truncation of the shifted de Rham complex 
   $$
    \Trunc\Big(\Omega[r+1]\Big) := 
    \left(
        \begin{tikzcd}[column sep = normal]
            \text{\tiny (deg $-r-1$)} & \text{\tiny (deg $-r$)} &  \cdots & \text{\tiny (deg $-1$)} & \text{\tiny (deg $0$)}\\[-2em]
            \Omega^0 \ar[r,"\d"] & \Omega^1 \ar[r,"\d"]  &
            \cdots \ar[r,"\d"] & \Omega^{r} \ar[r,"{\d}"] & \d\Omega^{r}
        \end{tikzcd}
    \right)~,
    $$
and $\iota_{\infty}\sigma$ is the $L_\infty$-morphism defined by contractions against $\sigma$ as in \eqref{eq:iota-infinity-sigma}.
The diagram \eqref{eq:diag-Rogers-DGLA} does not strictly commute, but it does so up to homotopy. To see this, we use the known fact, recalled in the Appendix as \autoref{prop:Linfty-homotopy-gauge-equivalence} that $L_\infty$-morphisms between two DGLAs $\mathfrak{g}$ and $\mathfrak{h}$ are the Maurer-Cartan elements of a Chevalley--Eilenberg-type DGLA $\CE(\mathfrak{g},\mathfrak{h})$ and that two $L_\infty$-morphisms between $\mathfrak{g}$ and $\mathfrak{h}$ are homotopy equivalent if and only if they are gauge equivalent as Maurer-Cartan elements in $\CE(\mathfrak{g},\mathfrak{h})$.
\begin{lemma}
    The bottom-right composition $\Lie$ and the right-bottom composition $\iota_\infty\sigma$ in the diagram \eqref{eq:diag-Rogers-DGLA}, seen as Maurer-Cartan elements in $\CE(\X_{\ham,\sigma}(M),\g_{r+1,\sigma})$, are related by the gauge transformation generated by the degree $0$ element $\iota: \X_{\ham,\sigma}(M) \to \g_{r+1,\sigma}[-1]$, i.e.
    \begin{displaymath}
        \iota_\infty\sigma = e^{\iota} \star \Lie~.
    \end{displaymath}
\end{lemma}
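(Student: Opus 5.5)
The plan is to compute the right-hand side $e^{\iota}\star\Lie$ directly with the gauge formula \eqref{eq:MC-Gauge-Transformation} in the DGLA $\CE(\X_{\ham,\sigma}(M),\g_{r+1,\sigma})$ recalled in the Appendix (cf.\ \autoref{prop:Linfty-homotopy-gauge-equivalence}). Then we compare it arity by arity with $\iota_\infty\sigma$ from \eqref{eq:iota-infinity-sigma}, viewed as an $L_\infty$-morphism into $\g_{r+1,\sigma}$ via the inclusion of $\Trunc(\Omega[r+1])$.

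Recall that an element of $\CE(\g,\h)$ is a family of graded-symmetric maps $\Sym^k(\g[1])\to\h$. The differential combines $\d_\h$ with the Chevalley--Eilenberg differential induced by the bracket of $\g$. The bracket is the convolution one, namely a symmetrized composition using the bracket of $\h$. Here $\g=\X_{\ham,\sigma}$ sits in degree $0$, and $\iota$ is the arity-one element $X\mapsto X_{[1]}$, of total degree $0$. The element $\Lie$ is the arity-one Maurer--Cartan element $X\mapsto X$.

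The first step is to compute $[\iota,\Lie]-\d\iota$.
\begin{itemize}
\item In arity one, $(\d\iota)(X)=\d_\sigma X_{[1]}=X+\iota_X\sigma$ by \autoref{def:smallg-sigma}, and $[\iota,\Lie]$ has no arity-one part.
\item In arity two, $\d\iota$ contributes $\pm[X,Y]_{[1]}$ from the Chevalley--Eilenberg part. The convolution bracket $[\iota,\Lie]$ contributes $\{X_{[1]},Y\}\pm\{Y_{[1]},X\}$, which is again a combination of $[X,Y]_{[1]}$.
\end{itemize}
I expect these arity-two terms to cancel, precisely because $\iota\colon\X\to\X[1]$ is equivariant. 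This would leave $[\iota,\Lie]-\d\iota=-X-\iota_X\sigma$, concentrated in arity one. Adding $\Lie$ then gives the linear part $-\iota_X\sigma$, which is the arity-one component of $\iota_\infty\sigma$.

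The second step is to evaluate the higher terms $\frac{1}{(n+1)!}\ad_\iota^n(-X-\iota_X\sigma)$. Brackets of two elements of $\X[1]$ vanish in $\g_{r+1,\sigma}$, so we have $[\iota,\iota]=0$. We also have $\{Y_{[1]},Z\}=-[Y,Z]_{[1]}$ and $\{Y_{[1]},\alpha\}=\iota_Y\alpha$. The term $\ad_\iota(-X)$ produces only arity-two expressions of the form $[X,Y]_{[1]}\pm[Y,X]_{[1]}$, which vanish by graded symmetry. Iterating $\ad_\iota$ on it then also yields zero. On the form part, $\ad_\iota^{\,k-1}(-\iota\sigma)$ is the arity-$k$ map obtained by symmetrizing $-\iota_{x_{\pi(1)}}\cdots\iota_{x_{\pi(k)}}\sigma$ over permutations $\pi$. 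These terms all agree up to the Koszul signs, because both the $x_i$ (of degree $0$ in $\g[1]$ shifted appropriately) and the contractions anticommute. The resulting factor $k!$ cancels the $1/k!$ coming from $n=k-1$. Hence the arity-$k$ component is $-\iota_{x_1}\cdots\iota_{x_k}\sigma$, matching \eqref{eq:iota-infinity-sigma}.

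Finally, I would note that the resulting maps land in the image of $\Trunc(\Omega[r+1])$. For $k\ge2$ this holds since they are forms of degree at most $r$. For $k=1$ it holds since $X$ is Hamiltonian, so $\iota_X\sigma$ is exact.

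The main obstacle I expect is the bookkeeping of signs and combinatorial normalizations in the convolution bracket and in the CE differential. This includes the sign conventions behind the remark on $\iota_{x_1}\cdots\iota_{x_k}\sigma$ versus $\iota_{x_k}\cdots\iota_{x_1}\sigma$. To handle it, I would first fix the convention of the Appendix on a two-variable test case, checking that the arity-two cancellation and the arity-two coefficient $-\iota_{x_1}\iota_{x_2}\sigma$ come out right. Then I would argue for general $k$ by induction on $n$ in $\ad_\iota^n$.
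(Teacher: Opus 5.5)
Your strategy (evaluating the gauge formula in $\CE(\X_{\ham,\sigma}(M),\g_{r+1,\sigma})$ and matching arities) is the paper's. Your treatment of the form part, where $\mathrm{ad}_\iota^{k-1}(-\iota_{\blank}\sigma)=k!\,(\iota_\infty\sigma)_k$ cancels the $1/k!$, is exactly the paper's induction $[\iota,(\iota_\infty\sigma)_k]_{\mathfrak{ce}}=(k+1)(\iota_\infty\sigma)_{k+1}$.

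The gap is in the $\Lie$-part. There you make two claims that are both false and also contradict each other.

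First, the arity-two pieces of $[\iota,\Lie]_{\mathfrak{ce}}$ and $\d_{\mathfrak{ce}}\iota$ do not cancel. Equivariance gives $\{X_{[1]},Y\}=[X,Y]_{[1]}$ and $\{Y_{[1]},X\}=[Y,X]_{[1]}$. (Incidentally, by the definition of the bracket in $\g_{r+1}$ one has $\{Y_{[1]},Z\}=+[Y,Z]_{[1]}$, not $-[Y,Z]_{[1]}$.) Skew-symmetrizing over the two degree-$0$ inputs combines these as $[X,Y]_{[1]}-[Y,X]_{[1]}=2[X,Y]_{[1]}$. So, in the paper's normalization, $[\iota,\Lie]_{\mathfrak{ce}}=2\,\iota\circ[\cdot,\cdot]_\X$, while the Chevalley--Eilenberg piece of $\d_{\mathfrak{ce}}\iota$ is only $\iota\circ[\cdot,\cdot]_\X$.

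Conceptually, equivariance does not make $\iota$ a cocycle. The coboundary of an equivariant $1$-cochain is $X\cdot\iota(Y)-Y\cdot\iota(X)-\iota([X,Y])=\iota([X,Y])$, which is nonzero in general. Hence
\[
[\iota,\Lie]_{\mathfrak{ce}}-\d_{\mathfrak{ce}}\iota=\iota\circ[\cdot,\cdot]_\X-\Lie-\iota_{\blank}\sigma=\tfrac12[\iota,\Lie]_{\mathfrak{ce}}-\Lie-\iota_{\blank}\sigma ,
\]
which is not concentrated in arity one.

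Second, your claim that $\mathrm{ad}_\iota(\Lie)$ vanishes ``by graded symmetry'' is refuted by the same computation. It is also inconsistent with your own first step, which needed a nonzero $[\iota,\Lie]_{\mathfrak{ce}}$ to cancel the nonzero term $\pm[X,Y]_{[1]}$ of $\d_{\mathfrak{ce}}\iota$.

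The two mistakes compensate each other, which is why your final formula comes out right. The argument as written, however, does not establish it.

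The correct bookkeeping, as in the paper, is as follows. In arity $k\geq 2$ the $\Lie$-contributions come from two places:
\begin{itemize}
\item $\mathrm{ad}_\iota^{k-2}$ applied to the arity-two piece $\tfrac12[\iota,\Lie]_{\mathfrak{ce}}$, with coefficient $\tfrac{1}{(k-1)!}$;
\item $\mathrm{ad}_\iota^{k-1}$ applied to $-\Lie$, with coefficient $\tfrac{1}{k!}$.
\end{itemize}
Together these give $\tfrac{1}{(k-1)!}\,\mathrm{ad}_\iota^{k-1}\bigl(\tfrac{\Lie}{2}-\tfrac{\Lie}{k}\bigr)$.

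For $k=2$ this vanishes because the coefficients $\tfrac{1}{1!}\cdot\tfrac12$ and $\tfrac{1}{2!}$ match. For $k\geq 3$ it vanishes because $[\iota,[\iota,\Lie]_{\mathfrak{ce}}]_{\mathfrak{ce}}=2[\iota,\iota\circ[\cdot,\cdot]_\X]_{\mathfrak{ce}}$ involves only brackets of two elements of $\X[1]$, and these are zero. So the vanishing $\{\X[1],\X[1]\}=0$, not $[\iota,\Lie]_{\mathfrak{ce}}=0$, is what kills the higher $\Lie$-terms.

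With this correction your proof goes through. Your final check that the components land in $\Trunc(\Omega[r+1])$, using that $\iota_X\sigma$ is exact for Hamiltonian $X$, is a useful addition that the paper leaves implicit.
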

\begin{proof}
    Denote for brevity the Chevalley-Eilenberg complex $\CE(\X_{\ham,\sigma}(M),\g_{r+1,\sigma})$ by $\mathfrak{ce}$.
    Notice that $\Lie$ and $\iota_\infty\sigma$ are degree $1$ elements in $\mathfrak{ce}$ which satisfy the  Maurer-Cartan equation. 
    On the other hand, the map $\iota$ is a degree $0$ element in $\mathfrak{ce}$.
    %the same complex since maps vector fields to contraction operators, which are degree $-1$ elements in $\g_{r,\sigma}$.
 The gauge action of $\iota$ on $\Lie\in \MC(\mathfrak{ce})$, see equations \eqref{eq:MC-Gauge-Transformation} and \eqref{eq:MC-Gauge-Transformation-CE},
  is given by the formula
 \begin{displaymath}
     e^{\iota} \star \Lie = \Lie + \sum_{n=1}^\infty \frac{[\iota, \blank]_{\mathfrak{ce}}^{n-1}}{n!}([\iota, \Lie]_{\mathfrak{ce}} - \d_{\mathfrak{ce}}(\iota))~.
 \end{displaymath}
 According to \autoref{def:CE-DGLA-Linfty-morphisms}, the last term reads as follows:
    \begin{align*}
        \d_{\mathfrak{ce}}(\iota)
        =&~ 
        \iota \circ [\blank,\blank]_{\X} + \cancel{\iota \ca \d_{\X}}
        +\d_{\g_{r+1,\sigma}} \ca \iota
        \\
        =&~
        \iota \circ [\blank,\blank]_{\X} 
        +\d_{\g_{r+1,\sigma}} \circ \iota        
        ~,
    \end{align*}
    where the crossed term vanishes since $\X$ is concentrated in degree $0$ and the sign of the last term comes from the fact that the arity and degree of $\iota$ are both equal to $1$.
    Unpacking the definition of the $\sigma$-twisted differential $\d_{\g_{r,\sigma}}$, one finds that
    \begin{align*}
        \d_{\g_{r+1,\sigma}} \circ \iota &= \cancel{(\ddR\oplus 0) \circ \iota} + \Lie + \lbrace\sigma, \iota\rbrace_{\g_{r+1}} ~,
        \\
        &= \Lie + \iota_{\blank}\sigma~,
    \end{align*}
    where the cancellation occurs because the map $\iota$ lands in $\mathfrak{C}^{-1}$. %$\Cartan^{-1}$.
    Moreover, according to equation \eqref{eq:bracket-Hom-DGLAs}, one finds that
    \begin{align*}
        [\iota, \Lie]_{\mathfrak{ce}} &= 
        -
         \lbrace\blank,\blank\rbrace_{\g_{r+1}} \circ (\iota \otimes \Lie) \circ P_{(1,1)}\\
        &={-}  \lbrace\iota, \Lie\rbrace_{\g_{r+1}}\circ P_{(1,1)}
        \\
        &=  2 \iota\circ {[\blank,\blank]_{\X}}~.
    \end{align*}
    From these, one obtains
    \begin{displaymath}
        e^{\iota} \star \Lie = \Lie + \sum_{n=1}^\infty \frac{[\iota, \blank]_{\mathfrak{ce}}^{n-1}}{n!} \left( \left[\iota,\frac{\Lie}{2}\right]_{\mathfrak{ce}}-\Lie - \iota_{\blank}\sigma \right)~.
    \end{displaymath}
    The component in arity $1$ of the above expression reads as
    \begin{align*}
        \left( e^{\iota}\star \Lie\right)_1
        =&~
        \Lie - \Lie - \iota_{\blank}\sigma =
        - \iota_{\blank}\sigma =
        \left(\iota_{\infty}\sigma\right)_1~.
    \end{align*}
    In arity $k\geq 2$, one has
    \begin{align*}
        \left( e^{\iota}\star \Lie\right)_k
        =&~
        \frac{1}{(k-1)!} \left[\iota, \blank\right]_{\mathfrak{ce}}^{k-1} \circ \left(\frac{\Lie}{2}-\frac{\Lie}{k}\right)
        + \frac{1}{k!}\left[ \iota,\blank\right]_{\mathfrak{ce}}^{k-1}\circ \left(\iota_{\infty}\sigma\right)_1
        ~.
        \\
    \end{align*}
    The first term manifestly vanishes when $k=2$. When $k\geq 3$, one can show by induction that it vanishes as well since
        \begin{align*}
            \frac{k-2}{k}\left[\iota,\left[\iota,\frac{\Lie}{2}\right]_{\mathfrak{ce}}\right]_{\mathfrak{ce}} =&~
             \frac{2-k}{k}
             \lbrace\blank,\blank\rbrace_{\g_{r+1}} \circ \left(\iota \otimes \left[\iota,\frac{\Lie}{2}\right]_{\mathfrak{ce}}\right) \circ P_{(1,1)}\\
             =&~
             \frac{2-k}{k}
             \lbrace\blank,\blank\rbrace_{\g_{r+1}} \left(\iota \otimes \left(\iota\circ {[\blank,\blank]_{\X}} \circ P_{(1,1)}\right)\right)
             \circ P_{1,2}
             \\
             =&~
           \frac{2-k}{k}
             \lbrace\iota(\blank),\iota\circ [\blank,\blank]_{\X}\rbrace_{\g_{r+1}}\circ P_{1,1,1}
                \\
                =&~
                0~,
        \end{align*}
        where the last cancellation occurs because the bracket $\lbrace\cdot,\cdot\rbrace_{\g_{r+1}}$ between two elements in $\X[1]\subseteq \g_{r+1}$
        %contraction operators 
        is zero.
        \\
        On the other hand, the last term coincides with $-(\iota_\infty \sigma)_k$ since %, as can be easily checked for $k=2$, 
        one has
        \begin{align*}
            \left[ \iota,\left(\iota_\infty \sigma\right)_k
            \right]_{\mathfrak{ce}} 
            =&~
            %{\color{purple}(-1)^{1 (k - k) }}
             \lbrace \iota, \iota_\blank\sigma\rbrace_{\g_{r+1}} \circ P_{(1,k)}\\
            =&~
            (k+1) \left(\iota_\infty\sigma\right)_{k+1}~,
        \end{align*}
        and therefore, by induction, one gets
        \begin{align*}
            \left[ \iota, \blank \right] ^{k-1} \, \left(\iota_\infty\sigma\right)_1 
            &=~
            k !\left(\iota_\infty \sigma\right)_{k}~.
        \end{align*}
\end{proof}
Summing up, we have proven the following.
\begin{proposition}\label{prop:gauge-for-ham}
 There is a homotopy commutative diagram of $L_\infty$-algebras
    \begin{displaymath}
       \begin{tikzcd}
           \X_{\ham,\sigma} \ar[r,"\iota_{\infty}\sigma"] \ar[d,"\Lie"'] &[2em]
            \Trunc\Big(\Omega(M)[r]\Big)
            %(\Omega^0\to\cdots\to \Omega^{r-1}\to \d\Omega^{r-1})
             \ar[d,hook]
            \\
            \g_{r,\sigma}^{\ge 0} \ar[r,hook] \ar[ur,Rightarrow, "e^{t\iota}"] &
            \g_{r,\sigma},
        \end{tikzcd}
    \end{displaymath}
    where $e^{t\iota}$ is the homotopy associated with the gauge transformation $e^\iota$.
\end{proposition}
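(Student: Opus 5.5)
The plan is to assemble Proposition~\ref{prop:gauge-for-ham} from the preceding lemma and the correspondence, recalled in the Appendix as \autoref{prop:Linfty-homotopy-gauge-equivalence}, between gauge equivalence and $L_\infty$-homotopy. The statement has three ingredients, and I check them in this order: the four arrows are $L_\infty$-morphisms; the two composites are Maurer--Cartan elements of $\mathfrak{ce}=\CE(\X_{\ham,\sigma}(M),\g_{r,\sigma})$ that are gauge equivalent; and the gauge element $\iota$ integrates to an explicit path $e^{t\iota}$ realizing the homotopy. (The index shift $r+1\mapsto r$ between the statement here and \eqref{eq:diag-Rogers-DGLA} is only a relabelling, so I work with the indices as they appear in the statement.)

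\textbf{Step 1: the arrows.} The bottom arrow is a DGLA inclusion. The left arrow $\Lie$ is a DGLA morphism, because $X\mapsto X\in\X\subset\g^0_{r,\sigma}$ preserves brackets and the twisted differential vanishes on it. Indeed $\d_\sigma X=-\Lie_X\sigma=-\ddR\iota_X\sigma$, which is zero because $\iota_X\sigma$ is exact for Hamiltonian $X$. The same observation shows that $\Lie$ lands in the subcomplex concentrated in degrees $\ge 0$. The top arrow is the Fiorenza--Rogers--Schreiber morphism \eqref{eq:iota-infinity-sigma}. For the right arrow, the truncation $\Trunc(\Omega[r])$ sits inside $\Omega[r]\subset\g_{r,\sigma}$: the $\Omega$-part is an abelian ideal, and $\d_\sigma=\ddR$ there, so the inclusion is a DGLA morphism.

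\textbf{Step 2: the composites and the gauge relation.} The two composites are strict composites with a DGLA map, so they are again $L_\infty$-morphisms, hence Maurer--Cartan elements of $\mathfrak{ce}$. The preceding lemma already gives $\iota_\infty\sigma=e^{\iota}\star\Lie$ with $\iota\in\mathfrak{ce}^0$.

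\textbf{Step 3: the homotopy.} To obtain the homotopy explicitly, I would set $\gamma(t)=e^{t\iota}\star\Lie\in\MC(\mathfrak{ce}\otimes\Omega^\bullet[0,1])$, extended with the $\d t$-component $-\iota\,\d t$ in the standard way (\cite[\S6.3]{Manetti2022}). Evaluating at $t=0$ gives $\Lie$, and evaluating at $t=1$ gives $\iota_\infty\sigma$. By \autoref{prop:Linfty-homotopy-gauge-equivalence}, this path is an $L_\infty$-homotopy between the two composites, which fills the 2-cell.

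\textbf{Main obstacle: convergence of the series.} The gauge series defining $e^{t\iota}\star\Lie$ must make sense in $\mathfrak{ce}$. I would argue that $[\iota,-]_{\mathfrak{ce}}$ raises arity by one, and that in each fixed arity $k$ only finitely many terms contribute. This is consistent with the explicit computation in the lemma: for fixed $k$ the arity-$k$ component is a polynomial in $t$, namely $t^k$ times the $k$-ary multicontraction term, because the higher iterated brackets vanish (brackets of two elements of $\X[1]$ are zero). The series is therefore well defined on the arity-completed $\mathfrak{ce}$, and this suffices for the homotopy interpretation.
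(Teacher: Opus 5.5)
Your proposal is correct and follows the paper's own route. The proposition is the preceding lemma $\iota_\infty\sigma=e^{\iota}\star\Lie$ combined with \autoref{prop:Linfty-homotopy-gauge-equivalence}, with $e^{t\iota}\star\Lie$ filling the 2-cell; your checks that the four arrows are $L_\infty$-morphisms and your arity-filtration argument for the series are details the paper leaves implicit.

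One slip in your convergence remark, which does not affect the argument since you only use polynomiality and finiteness in each arity. The arity-$k$ component of $e^{t\iota}\star\Lie$ equals $t^k(\iota_\infty\sigma)_k$ only for $k\ge 3$. In arity $1$ it is $(1-t)\Lie+t(\iota_\infty\sigma)_1$, as it must be for $\gamma(0)=\Lie$. In arity $2$ it contains an extra term proportional to $(t-t^2)\,\iota\circ[\blank,\blank]_{\X}$, which vanishes at both endpoints.
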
 
From the universal property of homotopy fibers, one gets \autoref{bigthm:morphism-Rogers-Courant} from the Introduction. Summing up, we have the following Theorem, summarizing Theorems \ref{bigthm:smaller-model}, \ref{bigthm:morphism-twistedCourant-untwistedCourant}, and \ref{bigthm:morphism-Rogers-Courant}.
\begin{theorem}[Theorems A,B,C]
    \label{thm:ABC}
    There is a distinguished homotopy commutative diagram of $L_\infty$-algebras
\begin{equation}
    \label{eq:final-diagram}
       \begin{tikzcd}
            \Rogers[\sigma]{r-1} \ar[d]\ar[r] &[3em] \X_{\ham,\sigma} \ar[r,"\iota_{\infty}\sigma"] \ar[d,"\Lie"'] &[4em]
            \Trunc\Big(\Omega(M)[r]\Big)
             \ar[d,hook]
            \\
            \Courant[\sigma]{r-1} \ar[r] \ar[d]&
            \g_{r,\sigma}^{\ge 0}\ar[d, "\Phi\vert_{\ge 0}"']  \ar[r,hook] \ar[ur,Rightarrow, "e^{t\iota}"] &
            \g_{r,\sigma}\ar[d, "\Phi_\sigma"]
            \\
            \Courant[\sigma]{r} \ar[r] &
\g_{r+1}^{\ge 0} \ar[r,hook] &
\g_{r+1}         \end{tikzcd}
    \end{equation}
    where the universal property of homotopy fibers induces the leftmost vertical arrows.
\end{theorem}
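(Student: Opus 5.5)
The plan is to assemble the diagram \eqref{eq:final-diagram} from three ingredients already established, and then apply the universal property of homotopy fibers twice.

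\textbf{The two right-hand columns.} I will first fix these squares.
\begin{itemize}
\item The top right square is the homotopy commutative square of \autoref{prop:gauge-for-ham}. Its 2-cell is the $L_\infty$-homotopy $e^{t\iota}$ corresponding, via \autoref{prop:Linfty-homotopy-gauge-equivalence}, to the gauge equivalence $\iota_\infty\sigma=e^{\iota}\star\Lie$ in $\CE(\X_{\ham,\sigma},\g_{r,\sigma})$.
\item The bottom right square is the strictly commutative square of $L_\infty$-morphisms from \autoref{ssec:theoremB}. There $\Phi_\sigma$ is the $\sigma$-twist of the $2$-term morphism $\Phi$ of \autoref{lem:Linfty-morphism-smallgr-to-smallgr+1}, and it restricts to non-negative truncations.
\end{itemize}

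\textbf{Identifying the horizontal rows as homotopy fibers.} Next I identify the leftmost column with the homotopy fibers of the three horizontal rows.
\begin{itemize}
\item For the middle row, I invoke \autoref{bigthm:smaller-model}, proved as \autoref{prop:smallg-and-largeg-same-fiber}. It gives $\Courant[\sigma]{r-1}\cong\hoFib(\g_{r,\sigma}^{\ge0}\hookrightarrow\g_{r,\sigma})$, via \autoref{lemma:Getzler-fibers-isomorphism} and \autoref{prop:higher-courant-as-hofib}.
\item The bottom row works the same way with $\sigma=0$ and $r$ replaced by $r+1$. This identifies the bottom-left entry with the untwisted higher Courant algebra $\Courant{r}$.
\item For the top row, the Fiorenza–Rogers–Schreiber identification gives $\Rogers[\sigma]{r-1}\cong\hoFib(\iota_\infty\sigma)$.
\end{itemize}
In each case the horizontal arrow out of the leftmost entry is the canonical projection from the homotopy fiber to the source.

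\textbf{Inducing the vertical arrows.} For the induced maps I will use the universal property of homotopy fibers as homotopy limits of the cospan $A\to D\leftarrow 0$. A map of cospans in the homotopy category is a homotopy commutative square together with the trivially commuting square over the zero object. Such a map induces a morphism of homotopy limits, well defined up to homotopy, which is compatible with the projections. Applying this to the top square (with homotopy $e^{t\iota}$) gives $\Rogers[\sigma]{r-1}\to\Courant[\sigma]{r-1}$. Applying it to the bottom square (with identity homotopy) gives $\Courant[\sigma]{r-1}\to\Courant{r}$.

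The resulting left squares commute up to the homotopies supplied by the universal property. Pasting everything gives the homotopy commutative diagram \eqref{eq:final-diagram}.

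\textbf{Main obstacle and what to check.} The subtle point is making the universal property rigorous in $\Linfty$, where the squares are only homotopy commutative and the vertical maps are $L_\infty$-morphisms rather than DGLA maps. I would work in the model structure on DGLAs, which is Quillen equivalent to $L_\infty$-algebras. I would rectify the $L_\infty$-morphisms $\Lie$, $\iota_\infty\sigma$ and $\Phi_\sigma$ to zigzags of DGLA maps, for instance through the bar–cobar resolution. I would then realize the homotopy $e^{t\iota}$ as a DGLA map into the path object $\g_{r,\sigma}\otimes\Omega(\Delta^1)$.

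With this in place, homotopy fibers can be computed as strict fibers of fibrant replacements, such as the Fiorenza–Manetti cone or Getzler's model. The induced maps then come from functoriality of the strict pullback. It remains to check that the Getzler model used in \autoref{lemma:Getzler-fibers-isomorphism} is compatible with these replacements; this follows from Pridham's comparison.

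As a sanity check, I would compute the linear part of the induced morphism $\Courant[\sigma]{r-1}\to\Courant{r}$ on the explicit cone model. It is $\Phi_1+\Phi_2(\sigma,-)$ restricted to negative degrees, which agrees with \eqref{eq:courant-linear} up to homotopy.
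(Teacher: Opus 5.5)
Your proposal is correct and follows the paper's own argument. The two right-hand squares are \autoref{prop:gauge-for-ham} and the strictly commutative $\Phi_\sigma$-square of \autoref{ssec:theoremB}, the rows are identified with homotopy fibers via \autoref{prop:smallg-and-largeg-same-fiber} and \cite[Thm.~3.12]{Fiorenza2014}, and the left vertical arrows come from the universal property of homotopy fibers; your rectification paragraph only spells out what the paper takes for granted, and you rightly read the bottom-left entry as the untwisted $\Courant{r}$.

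One correction to your closing sanity check, which plays no role in the proof. For $\sigma\neq 0$ the linear part $X_{[1]}\mapsto X_{[1]}+\iota_X\sigma$ is not chain-homotopic to \eqref{eq:courant-linear}. On $H^0$ the two maps differ by the class of $\iota_X\sigma$ in $\Omega^r/\ddR\Omega^{r-1}$, which is nonzero for suitable $X$. It matches \eqref{eq:courant-linear} only after composing with the automorphism of $\Courant{r}$ induced by the DGLA automorphism $\chi_{-\sigma}$ of $\g_{r+1}$ (\autoref{lem:gauge-transf-twisted-Courant-DGLAs} with $r+1$ in place of $r$, zero twisting, and $\beta=-\sigma$).
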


\begin{remark}
An explicit expression for the\footnote{Or, more precisely, an explicit representative of its homotopy equivalence class.} $L_\infty$-morphism $\Rogers[\sigma]{r-1}\to \Courant[\sigma]{r-1}$ constructed above is explicitly given in \cite{Miti2024}.
\end{remark}

\begin{remark} When $\sigma=0$ the equation $e^{\iota} \star \Lie=\iota_\infty\sigma$ reduces to the equation $e^{\iota} \star \Lie=0$ considered in \cite{Fiorenza2009} in the context of Cartan homotopies (\cite{Fiorenza2012,Iacono2025}).
\end{remark}

%\antonio{per Domenico: puoi aggiungere tu un remark di connessione alle \emph{Cartan homotopies} menzionate nei paper con Iacono e Martinengo~\cite{Fiorenza2009,Fiorenza2012,Iacono2025}?}

\begin{remark}[Algebraic generalizations and explicit formulas]
  %  \ngt{
In the spirit of \autoref{rem:outlook-algebraic-generalizations}, one may aim at a purely algebraic extension of \autoref{thm:ABC}.
In such a setting, the existence of suitable $L_\infty$-morphisms between homotopy fibers defined entirely in algebraic terms could be established by arguments analogous to the homotopical techniques employed above.
%
%A natural subsequent question concerns the derivation of explicit formulas for these morphisms, in the same vein as the constructions carried out in \cite{Miti2024} for the case of \autoref{thm:morphism-Rogers-Courant}.
%%
%Recent work of Getzler \cite{Getzler2025} provides explicit expressions for the relevant pullback maps, as well as a related groupoid-level formulation.
%According to his approach, explicit formulas for the $L_\infty$-morphisms under consideration can be obtained via cubical-set techniques.
%Related ideas also suggest the possible use of Bäcklund-type transformations ...
 %   }
\end{remark}

%\antonio{per Domenico: puoi aiutarmi a completare/aggiustare il precedente remark? In particolare relativamente al discorso di Backlund.
%\\
%Discussions with Getzler:
% \\- A recent paper by Getzler (11/3/25, said to be ``today'', possibly arXiv:2408.11157) contains an explicit expression for the pullback. Another of his articles has a ``groupoid version''.
% \\- He claims that explicit formulas for our $L_\infty$-morphisms can be obtained from the approach via cubical sets.
% \\- Domenico suggests that we could use the Bäcklund map. According to Getzler, this is related to the holonomy of flat connections (an unpublished result of Kapranov).
% \\- A paper by Bandiera on semiabelian $L_\infty$-algebras yields higher groupoids (strict $n$-groupoids).
%}

%+------------------------------------------------+
\subsection{Homotopy comomentum maps and the pentagonal diagram}
In \cite{Miti2024}, the authors consider a pentagonal diagram inspired by the geometric prequantization framework for integral symplectic forms.  
The commutativity of this pentagonal diagram admits a geometric justification whenever a suitable factorization through a so-called prequantization map is available.
In the literature, several notions of higher prequantization have been proposed, for instance, \cite[\S 5.1]{Krepski2022}, \cite{Djounvouna2024}, \cite[Thm. 3.5]{Sevestre2021}, \cite[Proposition 5.10]{Fiorenza2014}, and \cite{Fiorenza2016}.
However, none of these constructions appears to retain all the desirable properties of classical prequantization simultaneously.
In this section, we revisit the pentagonal diagram from \cite{Miti2024}, providing a homotopical justification for its commutativity.

Let $(M,\sigma)$ be a pre-$(r)$-plectic manifold, and let $\sigma' = \sigma + \ddR \beta$ be another pre-$(r)$-plectic form in the same de~Rham cohomology class, for some $\beta \in \Omega^{r}(M)$.
One can form the following open diagram in the category of $L_\infty$-algebras:
    \begin{displaymath}
       \begin{tikzcd}
            \Rogers[\sigma]{r-1}\ar[r] &
            \Courant[\sigma]{r-1} \ar[d,leftrightarrow,sloped,"\sim"]
            \\
            \Rogers[\sigma']{r-1}  \ar[r] &
            \Courant[\sigma']{r-1}
        \end{tikzcd}
        ~.
    \end{displaymath}
The rightmost equivalence takes the name of \emph{gauge transformation} (also $\beta$-transformation) in the literature on higher symplectic geometry. An explicit construction can be found in \cite[\S 5.1]{Miti2024}.
Such an equivalence can also be derived by the universal property of homotopy fibers, noticing that the two DGLAs $\gSmall_{r,\sigma}$ and $\gSmall_{r,\sigma'}$ are isomorphic, as shown below.
\begin{lemma}[Gauge transformation between twisted higher Courant DGLAs]
    \label{lem:gauge-transf-twisted-Courant-DGLAs}
    Let \(\sigma,\sigma' \in \Omega^{r+1}_{\mathrm{cl}}(M)\) be two closed forms such that \(\sigma'=\sigma + \ddR \beta\) for a given \(\beta \in \Omega^{r}(M)\).
    Then, the map
    \begin{displaymath}
        \morphism{\chi_{\beta}}{\g_{r,\sigma}}{\g_{r,\sigma'}}
        {\omega + X_{[1]} + Y}
        {\omega + X_{[1]} + \iota_X \beta + Y + \Lie_Y \beta},
    \end{displaymath}
    defines an isomorphism of DGLAs $\g_{r,\sigma} \cong \g_{r,\sigma'}$.
    This induces the commutative square of DGLAs:
    \begin{displaymath}
       \begin{tikzcd}
            \g_{r,\sigma}^{\ge 0} \ar[r,hook] \ar[d,"\chi_{\beta}\vert_{\ge 0}"',"\wr"] 
            &[3em]
            \g_{r,\sigma} \ar[d,"\chi_{\beta}","\wr"']
            \\
            \g_{r,\sigma'}^{\ge 0} \ar[r,hook] &
            \g_{r,\sigma'},
        \end{tikzcd}
    \end{displaymath}
    and so an equivalence $\Courant[\sigma]{r-1} \cong \Courant[\sigma']{r-1}$ by the universal property of homotopy fibers.
\end{lemma}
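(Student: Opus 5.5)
The map $\chi_\beta$ is the exponential of the adjoint action of $\beta$. Regarded as an element of $\g_r$, the form $\beta \in \Omega^r$ has shifted degree $0$. Since $\Omega[r]$ is an abelian ideal in the semidirect product $\mathfrak{C}\ltimes\Omega[r]$, $\ad_\beta$ squares to zero on $\g_r$. So I would first check that
\[
\chi_\beta = e^{-\ad_\beta} = \mathrm{id} - \{\beta,\blank\}_{\g_r}.
\]
Indeed $\{\beta, X_{[1]}\} = -\{X_{[1]},\beta\} = -\iota_X\beta$ and $\{\beta,Y\} = -\Lie_Y\beta$, while $\{\beta,\omega\}=0$. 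This recovers exactly the formula $\omega + X_{[1]} + \iota_X\beta + Y + \Lie_Y\beta$, up to fixing the sign convention; if needed I take $e^{\ad_{-\beta}}$. Since $\ad_\beta$ is a degree $0$ derivation of the bracket and is nilpotent, its exponential is an automorphism of the graded Lie algebra $\g_r$. Its inverse is $\chi_{-\beta}$, so bijectivity is immediate.

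The remaining point is compatibility with the twisted differentials, i.e.\ $\chi_\beta\circ \d_\sigma = \d_{\sigma'}\circ\chi_\beta$. I would do this by the standard gauge-action identity $e^{\ad_a}\circ(\d+\ad_x)\circ e^{-\ad_a} = \d + \ad_{e^a\star x}$. By \eqref{eq:MC-Gauge-Transformation}, the relevant gauge action is $e^{\pm\beta}\star\sigma = \sigma \mp \d\beta = \sigma\mp\ddR\beta$, because all higher terms vanish: $[\beta,\sigma]=0$ and $[\beta,\ddR\beta]=0$, since forms bracket trivially. With the correct sign this gives $\sigma'$, which is precisely the gauge-equivalence statement recalled in the conventions.

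As a safeguard against sign errors I would also verify the identity directly on generators:
\begin{itemize}
\item On $\omega$, both sides give $\ddR\omega$.
\item On $Y$: $\d_{\sigma'}(Y+\Lie_Y\beta) = -\Lie_Y\sigma - \Lie_Y\ddR\beta + \ddR\Lie_Y\beta = -\Lie_Y\sigma$, using \eqref{eq:CartanLievsD}. This equals $\chi_\beta(-\Lie_Y\sigma)$.
\item On $X_{[1]}$: $\d_{\sigma'}(X_{[1]}+\iota_X\beta) = X + \iota_X\sigma + \iota_X\ddR\beta + \ddR\iota_X\beta = X + \iota_X\sigma + \Lie_X\beta$, using \eqref{eq:CartanDvsIota}. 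This equals $\chi_\beta(X+\iota_X\sigma)$.
\end{itemize}
This settles the DGLA isomorphism. The bracket compatibility could likewise be checked by hand using \eqref{eq:CartanLievsIota} and \eqref{eq:CartanLiesquared}, but the exponential argument makes that unnecessary.

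For the square, note that $\chi_\beta$ preserves degree. It therefore maps $\g_{r,\sigma}^{\ge0}$ isomorphically onto $\g_{r,\sigma'}^{\ge0}$, and the square commutes strictly. Applying functoriality of homotopy fibers to this strictly commutative square of DGLA isomorphisms gives an isomorphism, hence an equivalence, of homotopy fibers. Combined with Proposition \ref{prop:smallg-and-largeg-same-fiber} this yields $\Courant[\sigma]{r-1}\cong\Courant[\sigma']{r-1}$. Concretely, on the Getzler model the induced map is the restriction of $\chi_\beta$ to negative degrees, namely $X_{[1]}\mapsto X_{[1]}+\iota_X\beta$, which is visibly compatible with the derived brackets.

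The only delicate step is the sign bookkeeping between $e^{\ad_\beta}$ versus $e^{-\ad_\beta}$ and $\sigma+\ddR\beta$ versus $\sigma-\ddR\beta$. The direct generator check above resolves it.
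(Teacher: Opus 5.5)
Your proposal is correct and follows essentially the same route as the paper. You identify $\chi_\beta=e^{\mathrm{ad}_{-\beta}}=\mathrm{id}-\{\beta,\blank\}$, which is nilpotent because $\mathrm{ad}_\beta^2=0$. You observe that $\sigma'=\sigma+\ddR\beta=e^{-\beta}\star\sigma$. You then apply the conjugation identity $e^{\mathrm{ad}_\gamma}\circ\mathrm{d}_\lambda\circ e^{-\mathrm{ad}_\gamma}=\mathrm{d}_{e^\gamma\star\lambda}$, and degree preservation gives the commuting square. Your extra direct check of $\chi_\beta\circ\mathrm{d}_\sigma=\mathrm{d}_{\sigma'}\circ\chi_\beta$ on $\omega$, $X_{[1]}$ and $Y$ is correct and settles the signs, which the paper leaves implicit.
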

\begin{proof}
It is a general fact about DGLAs $\mathfrak{g}_{\lambda}$ and $\mathfrak{g}_{\lambda'}$ obtained by twisting the differential $\d$ of a DGLA $\mathfrak{g}$ by gauge equivalent Maurer-Cartan elements $\lambda$ and $\lambda'$: if $\lambda'=e^\gamma\star \lambda$ with a nilpotent $\gamma$, then  
\[
e^{\mathrm{ad}_\gamma} \colon \mathfrak{g}_{\lambda}\to \mathfrak{g}_{\lambda'}
\]
is a DGLA isomorphism (see e.g. \cite[\S 6.3]{Manetti2022}). 
Indeed, since $[\gamma,\blank]$ is a derivation of the bracket of $\mathfrak{g}$, the exponential $e^{[\gamma,\blank]}$ is an automorphism of the graded Lie algebra $\mathfrak{g}$. So it is an isomorphism of graded Lie algebras from $\mathfrak{g}_\lambda$ to $\mathfrak{g}_{\lambda'}$. To see its action on the differentials, we compute
\[
e^{\mathrm{ad}_\gamma}\circ \d_\lambda \circ e^{-\mathrm{ad}_\gamma}=e^{\mathrm{ad}_\gamma}\circ (\d+[\lambda,-]) \circ e^{-\mathrm{ad}_\gamma}=d+[e^\gamma\star\lambda,-]=\d+[\lambda',\blank]=\d_\lambda',
\]
and so
\[
e^{\mathrm{ad}_\gamma}(\d_\lambda x)= \d_{\lambda'}(e^{\mathrm{ad}_\gamma} x).
\]
In our case 
\[
\sigma'=\sigma+\ddR\beta=e^{-\beta}\star \sigma,
\]
and
$\mathrm{ad}_{-\beta}$ maps $\mathfrak{g}_r$ into the subcomplex $\Omega[r]$, so that $\mathrm{ad}_{-\beta}^2=0$. In particular $\mathrm{ad}_{-\beta}$ is nilpotent. Therefore $\chi_\beta:=e^{\mathrm{ad}_{-\beta}}\colon \mathfrak{g}_{r,\sigma}\to \mathfrak{g}_{r,\sigma'}$ is a DGLA isomorphism. Since $\mathrm{ad}_{-\beta}^2=0$,
the isomorphism $\chi_\beta:=e^{\mathrm{ad}_{-\beta}}$ reduces to
\[
\omega + X_{[1]} + Y\mapsto \omega + X_{[1]} + Y
-\{\beta,\omega + X_{[1]} + Y\}_{\mathfrak{g}_r}=\omega + X_{[1]} 
+ Y
+\iota_X\beta + \Lie_Y\beta,
\]
where we used that $|\beta|=\deg(\beta)-r=0$.
Being an isomorphism of DGLAs, $\chi_\beta$ clearly preserves the non-negative parts.
\end{proof}

    The bottom line of \cite[Thm.~5.3]{Miti2024} is that the open diagram above can be completed to a commutative pentagon, provided there exists a Lie algebra action preserving the pre-\((r)\)-plectic forms $\sigma$ and $\sigma'$, and admitting a so-called \emph{homotopy comomentum map}.
    \begin{definition}[Homotopy comomentum maps {\cite{Callies2016}}]
        Let $(M,\sigma)$ be a pre-$(r)$-plectic manifold and $\mathfrak{K}$ be a Lie algebra acting on $M$ via a Lie algebra morphism $\rho:\mathfrak{K} \to \X_{\ham,\sigma}(M)$.
        A \emph{homotopy comomentum map} is a morphism of $L_\infty$-algebras $\mathcal{h}:\mathfrak{K}\to \Rogers[\sigma]{r-1}$ lifting the infinitesimal action, i.e.,  the datum of a homotopy commutative diagram of \(L_\infty\)-algebras of the form
        \begin{displaymath}
              \begin{tikzcd}[]
                  \mathfrak{K} \ar[rd,"\rho"',shift right = 1.5ex,""{name=A}] \ar[r,"\mathcal{h}"] &[5em] \Rogers[\sigma]{r-1} \ar[d]
                 \\
                &[1em]
                 \X_{\ham,\sigma}(M)
                 \arrow[Rightarrow,"\eta"', from=A, to=1-2]
                \end{tikzcd}
        \end{displaymath}
        
        \end{definition}
    \begin{remark}[Homotopy comomentum maps are homotopies]
        \label{rem:homotopy-comomentum-maps-are-homotopies}
    By the universal property of homotopy fibers, a homotopy comomentum map is equivalently given by an $L_\infty$-homotopy filling the  commutative square
    \begin{displaymath}
        \begin{tikzcd}
            \mathfrak{K} \ar[r] \ar[d,"\rho"'] &[5em] 0 \ar[d]
            \\[1em]
            \X_{\ham,\sigma}(M) \ar[r,"\iota_{\infty}\sigma"'] \ar[ur,Rightarrow,sloped,"\mathcal{h}"] & 
            \Trunc\Big(\Omega(M)[r]\Big)~.
            %(\Omega^0\to\cdots\to \Omega^{r-1}\to \d\Omega^{r-1})~.
        \end{tikzcd}
    \end{displaymath}
    %where $\Trunc(\Omega(M)[r])$ is the abelian DGLA $(\Omega^0\to\cdots\to \Omega^{r-1}\to \d\Omega^{r-1})$.
    As such, looking at $\iota_{\infty}\sigma \circ \rho$ and the zero map as Maurer-Cartan elements in $\mathfrak{ce}=\CE(\mathfrak{K},\Trunc(\Omega(M)[r]))$, a homotopy comomentum map is equivalently given by a gauge transformation between them.
    Namely, it can be seen as a degree zero element $\mathcal{h} \in \mathfrak{ce}^0$, i.e., a collection of maps
    \begin{displaymath}
        \lbrace h_k:\wedge^k \mathfrak{K} \to \Omega^{r-k}(M) ~|~ 1\leq k \leq r \rbrace~,
    \end{displaymath}
    satisfying the equation
    \begin{displaymath}
        e^{\mathcal{h}} \star \left(\iota_\infty \sigma \circ \rho\right)
        = 0~.
    \end{displaymath}
    Being $\Trunc(\Omega(M)[r])$ an abelian DGLA, according to equations \eqref{eq:MC-Gauge-Transformation-CE} and \eqref{eq:bracket-Hom-DGLAs}, the gauge action simplifies to
    \begin{displaymath}
        e^{\mathcal{h}} \star \left(\iota_\infty \sigma \circ \rho\right)
        = \iota_\infty \sigma \circ \rho - \d_{\mathfrak{ce}}(\mathcal{h})~,
    \end{displaymath}
    where the differential $\d_{\mathfrak{ce}}$ reads 
    \begin{displaymath}
        \d_{\mathfrak{ce}}(\mathcal{h}) = \cancel{\mathcal{h} \ca \d_{\mathfrak{K}}} + \ddR \ca \mathcal{h} + \mathcal{h} \ca [\blank,\blank]_{\mathfrak{K}}
        ~,
    \end{displaymath}
    since $\mathfrak{K}$ is concentrated in degree $0$.
    Therefore, the homotopy comomentum map condition is equivalent to 
    \begin{displaymath}
        \iota_\infty \sigma \circ \rho = \ddR \ca \mathcal{h} + \mathcal{h} \ca [\blank,\blank]_{\mathfrak{K}}~
    \end{displaymath}
    which, when read on componets of fixed arity $k$, gives exactly the explicit conditions listed in \cite{Callies2016,Miti2021,Miti2024}.
    \end{remark}
    Using these conditions, an explicit computation in  \cite{Miti2024} proves the commutativity of the following diagram of \(L_\infty\)-algebras:
    \begin{equation}
        \label{eq:pentagonal-diagram}
       \begin{tikzcd}
            &[1em]
            \Rogers[\sigma]{r-1}\ar[r] &
            \Courant[\sigma]{r-1} \ar[dd,leftrightarrow,sloped,"\sim"]
            \\[-.75em]
            \mathfrak{K} \ar[ur,"\mathcal{h}"] \ar[dr,"\mathcal{h}'"'] &
            &
            \\[-.75em]
            &
            \Rogers[\sigma']{r-1}  \ar[r] &
            \Courant[\sigma']{r-1}
        \end{tikzcd}
        ~,
    \end{equation}
    where $\mathfrak{K}$ is a Lie algebra acting on $M$ via Hamiltonian vector fields preserving both $\sigma$ and $\sigma'$, and admitting homotopy comomentum maps $\mathcal{h}$ and $\mathcal{h}'$ with respect to $\sigma$ and $\sigma'$, respectively.

    Our goal is to establish the naturality of diagram \eqref{eq:pentagonal-diagram}, namely to show that its commutativity is a formal consequence of the universal property of homotopy fibers.
To begin with, observe that we have a natural equivalence $\mathfrak{K} \cong \hoFib(\mathfrak{K} \to 0)$. %, since it is the homopy pullback  of  $\mathfrak{K} \to 0$ along  the canonical isomorphism $0 \cong 0$.
By the universal property of homotopy fibers, the homotopy commutativity of diagram \eqref{eq:pentagonal-diagram} is therefore equivalent to the homotopy commutativity of the lateral and top faces of the following prism-shaped diagram.

    \begin{equation}
        \label{eq:prism-diagram}
        \begin{tikzcd}[row sep=.5em,
  column sep=.5em,
  arrows={line width=0.35pt}]
	        & \X_{\ham,\sigma} 
            &&& \gSmall_{r+1,\sigma}^{\ge 0}
            \\
            \mathfrak{K} 
            \\
            && \X_{\ham,\sigma} 
            &&& \gSmall_{r+1,\sigma'}^{\ge 0}
            \\
	        \\
	        & \Trunc\Big(\Omega(M)[r]\Big)
            &&& \gSmall_{r+1,\sigma} 
            \\
            0 
            \\
            && \Trunc\Big(\Omega(M)[r]\Big)
            &&& \gSmall_{r+1,\sigma'}
	\arrow[from=1-2, to=1-5]
	\arrow[from=1-2, to=5-2]
	\arrow[from=1-5, to=3-6]
	\arrow[from=1-5, to=5-5]
	\arrow[from=2-1, to=1-2]
	\arrow[from=2-1, to=3-3, phantom, crossing over]
	\arrow[from=2-1, to=3-3]
	\arrow[from=2-1, to=6-1]
	\arrow[from=3-3, to=3-6, phantom, crossing over]
	\arrow[from=3-3, to=3-6]
	\arrow[from=3-6, to=7-6]
	\arrow[from=5-2, to=5-5]
	\arrow[from=5-5, to=7-6]
	\arrow[from=6-1, to=5-2]
	\arrow[from=6-1, to=7-3]
	\arrow[from=7-3, to=7-6]
	\arrow[from=3-3, to=7-3, phantom, crossing over]
	\arrow[from=3-3, to=7-3]
	\end{tikzcd}
        ~.
    \end{equation}
    The homotopy commutativity of the lateral faces of the prism is guaranteed by \autoref{prop:gauge-for-ham}, \autoref{rem:homotopy-comomentum-maps-are-homotopies} and \autoref{lem:gauge-transf-twisted-Courant-DGLAs}
%     which imply the homotopy commutativity of the following diagram:
%    \begin{displaymath}
%       \begin{tikzcd}
%           \mathfrak{K} \ar[d] \ar[r]
%           &[2em] 0 \ar[d]
%           \\
%           \X_{\ham,\sigma} \ar[r,"\iota_{\infty}\sigma"'] \ar[ur,Rightarrow,sloped,"\mathcal{h}"] \ar[d] &
%           (\Omega^0\to\cdots\to \Omega^{r-1}\to \d\Omega^{r-1}) \ar[d]
%           \\
%           \gSmall_{r,\sigma}^{\ge 0} \ar[d,leftrightarrow,sloped,"\sim"] \ar[r,hook] \ar[ur,Rightarrow,sloped,"\iota"]
%           & \gSmall_{r,\sigma} \ar[d,leftrightarrow,sloped,"\sim"]
%           \\
%           \gSmall_{r,\sigma'}^{\ge 0} \ar[r,hook] \ar[dr,Rightarrow,sloped,"\iota"']
%           & \gSmall_{r,\sigma'}
%           \\
%           \X_{\ham,\sigma'} \ar[u] \ar[r,"\iota_{\infty}\sigma'"] \ar[dr,Rightarrow,sloped,"\mathcal{h}'"']
%           &
%           (\Omega^0\to\cdots\to \Omega^{r-1}\to \d\Omega^{r-1})
%           \ar[u]
%           \\
%           \mathfrak{K} \ar[u] \ar[r]
%           & 0 \ar[u]
%        \end{tikzcd}
%        ~.
%    \end{displaymath}
%    The 
For what concerns the top face
\begin{equation}\label{eq:top-face}
       \begin{tikzcd}
            &[1em]
            \X_{\ham,\sigma}\ar[r] &
            \gSmall_{r+1,\sigma}^{\ge 0} \ar[dd,"\chi_{\beta}"]
            \\[-.75em]
            \mathfrak{K} \ar[ur,"\rho"] \ar[dr,"\rho'"'] &
            &
            \\[-.75em]
            &
            \X_{\ham,\sigma'} \ar[r] &
            \gSmall_{r+1,\sigma'}^{\ge 0}
        \end{tikzcd}
        ~,
    \end{equation}
  since the source $\mathfrak{K}$ is an ordinary Lie algebra and the target $\gSmall_{r+1,\sigma'}^{\ge 0}$ is a DGLA in non-negative degrees, an $L_\infty$-morphism between them is necessarily a DGLA morphism, and a homotopy between two such morphisms is necessarily the identity, so diagram \eqref{eq:top-face} has to strictly commute. This requirement corresponds to the identity
  \[
 \rho'(\xi)= \rho(\xi)+\Lie_{\rho(\xi)}\beta
  \]
for every $\xi \in \mathfrak{K}$. Thus we retrieve the hypothesis of \cite[Thm.~5.3]{Miti2024} ensuring the commutativity of diagram \eqref{eq:pentagonal-diagram}.
As a side remark, notice that also the  bottom face of \eqref{eq:prism-diagram} trivially commutes since $0$ is the initial $L_\infty$-algebra.

%=================================================#
\section{Outlook}
%=================================================#
In \cite{Zambon2012}, Zambon introduced higher Courant algebroids to generalize the notion of Dirac structures. 
All the constructions presented here are likely to have an immediate generalization to the higher Dirac structures case. 
Here, we focused on the multisymplectic case because, in our opinion, it represents an ideal balance between classical differential-geometry constructions (Poisson structures, Hamiltonian vector fields, Cartan calculus) and abstract homotopical algebra constructions. 
Developing the whole construction directly for higher Dirac structures and recovering the multisymplectic case as a special example would have possibly made the whole treatment more obscure, so we resisted the temptation of generality. 
Details on the general construction will appear elsewhere \cite{Basu2026}.
Here, we content ourselves with recalling the basic facts about higher Dirac structures to provide a short presentation of the expected results in the general case.

    \begin{definition}[Higher Dirac Structures]
        Let $M$ be a smooth manifold and $r\geq 0$ an integer.
        A \emph{higher Dirac structure} of degree $r$ is a subbundle $L\subset E^{(r)}=TM \oplus \wedge^r T^*M$ which is involutive with respect to the higher Courant bracket and maximally isotropic with respect to the pairing $\langle\cdot,\cdot\rangle_{+}$.
    \end{definition}
%Multisymplectic forms are a particular case of higher Dirac structures.
    \begin{example}[Multisymplectic forms as Dirac structures]
        Let $\sigma\in \Omega^{r+1}(M)$ be a differential form. 
        The graph of the bundle map $\sigma^\flat: TM\to \wedge^r T^*M$, given by $\sigma^\flat(x)=\iota_x\sigma$, is a subbundle of $E^{(r)}$; if $\sigma$ is closed, this graph is a higher Dirac structure.
    \end{example}
    Given an involutive and isotropic subbundle $L\subset E^{(r)}$, one can pinpoint a special class of differential forms called \emph{admissible forms}:
    \begin{equation}
        \label{eq:admissible-forms-Zambon}
            \Omega^{r-1}_{\adm}(M,L) = \lbrace \alpha \in \Omega^{r-1}(M) ~|~ \exists X \in \X(M) : X + \d\alpha \in \Gamma(L) \rbrace
            ~.
    \end{equation} 
Given $\sigma \in \Omega^{r-1}_{\adm}(M,L)$ the associated \emph{admissible  vector field} $X_\sigma$ is unique modulo $\Gamma(L \cap (TM\oplus 0))$.
One can therefore consider the following binary operation on admissible forms:
$$
    \morphism{\lbrace\blank,\blank\rbrace_{\adm}}{\Omega^{r-1}_{\adm}(M,L) \times \Omega^{r-1}_{\adm}(M,L)}{\Omega^{r-1}_{\adm}(M,L)}
{\sigma_1,\sigma_2}
{\iota_{X_{\sigma_1}} \d \sigma_2} 
$$
which is skew-symmetric and satisfies the Jacobi identity up to exact forms.
In \cite[Thm 6.7]{Zambon2012} it is shown that the bracket on admissible forms is part of an $L_\infty$-algebra $\Zambon[L]{r-1}$ associated with the Dirac structure $L$ in a way similar to the Rogers' construction.
Therefore, one could naturally ask to what extent the diagram \eqref{eq:final-diagram} can be generalized to higher Dirac structures.
In other words, the following questions arise:
\begin{itemize}
    \item[Q1)] Does the recognition principle used in \cite{Fiorenza2014} to characterize Rogers' $L_\infty$-algebra as a homotopy fiber also applies to $\Zambon[L]{r-1}$?
     \item[Q2)] Can one use a higher Dirac structure $L$ to twist a Courant higher algebroid, obtaining an $L_\infty$-algebra $\Courant[L]{r-1}$
      \item[Q3)] Has one natural distinguished nontrivial $L_\infty$-morphism $\Courant[L]{r-1}\to \Courant[L]{r}$
      \item[Q4)] Can one apply the above homotopical reasoning to produce an $L_\infty$-morphism from $\Zambon[L]{p-1}$ to the twisted higher Courant $L_\infty$-algebra $\Courant[L]{r-1}$? 
\end{itemize}
 Similar questions have been raised in \cite[\S 9]{Zambon2012}. 
 %Preliminary results on the application of the homotopical machinery to this setting are currently in progress \cite{Basu2026}.
\\
The generalization to higher Dirac structures could be relevant in several contexts.
For instance, they can be used to devise a higher analogue of the group valued moment maps appearing in the study of quasi-Hamiltonian $G$-spaces, and to read them in terms of Dirac structures, extending to the higher case the construction by  Ikeda \cite{Ikeda2025}  (see also \cite{Hirota2024,Hirota2025a,Hirota2025b}).
Another possibile development is to connect back to the graded geometry approach used by Zambon in defining the higher Courant $L_\infty$-algebras. Specifically, it is to be expected that the small DGLAs $\g_{r,\sigma}$ and $\g_{r+1}$ we used to exhibit the distinguished morphism $\Courant[\sigma]{r-1}\to \Courant{r}$ are purely auxiliary and that the whole construction can be expressed in terms of the graded geometry of shifted cotangent bundles. Moreover, the one-to-one correspondence between (twisted) higher Dirac structures in  $E^{(r)}$ and Lagrangian $Q$-submanifolds of $T^*[r+1]T[1]M$  \cite[Thm. 4.8]{Cueca2021} suggests this geometric description could have a natural interpretation in terms of higher Dirac structures.
A more speculative outlook concerns possible applications to Poisson sigma models (see, e.g., \cite{Ikeda2025}).

%#########################################################################%
% APPENDICES
%#########################################################################%
\appendix
%#########################################################################%

%=================================================#
\section{A primer on \texorpdfstring{$L_\infty$}{L-infinity}-algebras}
\label{appendix:linf}
%=================================================#
%
In this appendix, we briefly recall the basic notions of $L_\infty$-structures (algebras, morphisms, and homotopies) and recall the notion of homotopy fibers used throughout the paper. Our notations are borrowed from \cite{Fiorenza2009,Miti2021} and references therein. 
%This section contains nothing substantially new, except for the form in which the material is presented; we 
We include a few proofs for the sake of self-consistency. We address the reader to \cite{Manetti2022} for a comprehensive treatment of the subject. See also \cite{Kraft2024} for a recent review on the topic.
%
%-.-.-.-.-.-.-.-.-.-.-.-.-.-.-.-.-.-.-.-.-.-.-.-.-.-.-.-.-.-.-+
\subsection{\texorpdfstring{$L_\infty$}{L-infinity}-structures as Maurer-Cartan elements}
%-.-.-.-.-.-.-.-.-.-.-.-.-.-.-.-.-.-.-.-.-.-.-.-.-.-.-.-.-.-.-+
Let $V, W$ be two graded vector spaces.
We denote the vector space of homogeneous, graded, skew-symmetric, multilinear maps from $V$ to $W$ with arity $a \geq 1$ and degree $d \in \mathbb{Z}$ by the symbol
\begin{equation}\label{eq:biggraded-Hom-arity-degree}
    \underline{\Hom}^{a,d}\big(V, W\big) := \Hom_{\gVect}\big(\wedge^a V, W[d]\big)~.
\end{equation}
The direct sum of these vector spaces forms a bigraded vector space that we will denote by $\underline{\Hom}^{\bullet,\bullet}\big(V, W\big)$ or simply by $\underline{\Hom}\big(V, W\big) $. 
On such space one can define the so-called \emph{Nijenhuis-Richardson product}~\cite{Nijenhuis1967} as follows:
\begin{equation}\label{eq:RN-product}
    \morphism{\ca}{\underline{\Hom}^{a_1,d_1}\big(V, W\big) \otimes \underline{\Hom}^{a_2,d_2}\big(V,V\big)}{\underline{\Hom}^{a_1 + a_2 - 1, d_1 + d_2}(V, W)}
    {(\mu,\nu)}{(-1)^{(a_1-1)d_2}\,\mu \circ \big(\nu \otimes \mathbb{1}_{a_1-1}\big) \circ P_{a_2,a_1-1}}
    ~,
\end{equation}
    where $\mathbb{1}_{i}$ denotes the identity endomorphism on $V^{\otimes i}$, $P_{j,k}$ is the signed unshuffle\footnote{
        The group of $(j,k)$-unshuffles $\mathrm{Sh}(k_1,\dots,k_\ell)$ is the subgroup of permutations of $j+k$ elements that preserve the order of the first $j$ and the last $k$ elements, i.e. $\sigma(i)<\sigma(i+1),~ \forall i\neq j$.
    } 
    permutation operator
    \begin{equation}\label{eq:unshuffle-permutator}
        P_{j,k}(x_1 \wedge \cdots \wedge x_{j+k}) = \sum_{\sigma \in \ush{j,k}} (-1)^\sigma \varepsilon(\sigma,x)~ x_{\sigma(1)} \wedge \cdots \wedge x_{\sigma(j+k)}~,\qquad \forall x_i \in \g~,
    \end{equation}
    $(-1)^\sigma$ is the signature of the permutation $\sigma$
    and $\varepsilon(\sigma,x)$ is the Koszul sign associated to the permutation $\sigma$ acting on the graded elements $x_1, \dots, x_{j+k} \in \g$.
    \begin{remark}[Sign prefactors]
        The appearance of the sign prefactor $(-1)^{(a_1-1)d_2}$ in \eqref{eq:RN-product} is due to the fact that we are considering graded skew-symmetric multilinear maps.
        In fact, the original definition of Nijenhuis-Richardson product in \cite{Nijenhuis1967} is given for graded symmetric multilinear maps and does not include such a sign and the appeareance of that sign is a manifestation of the \emph{decalage isomorphism} (See \cite[\S 10.6]{Manetti2022} or \cite[\S 1]{Miti2021}).
    \end{remark}
    \begin{remark}\label{rmk:associator-Nijenhuis-Richardson}
        It is important to notice that the Nijenhuis-Richardson product $\ca$ is in general not associative.
        More explicitly, given any three multilinear operators $\mu_a \in \underline{\Hom}^{a,\deg(\mu_a)}(V,W)$, $\mu_b \in \underline{\Hom}^{b,\deg(\mu_b)}(V,V)$, $\mu_c \in \underline{\Hom}^{c,\deg(\mu_c)}(V,V)$ the corresponding associators
        $$
            \alpha(\ca; \mu_a,\mu_b,\mu_c)  =(\mu_a \ca \mu_b)\ca \mu_c - \mu_a \ca (\mu_b \ca \mu_c)
        ~,
        $$
        results
		\begin{align*}
			\alpha(\ca; \mu_a,\mu_b,\mu_c) 
			=&~
			(-1)^{\mathcal{s}}
			\mu_a \circ \left((\mu_b\otimes\mu_c \circ P_{b,c})\otimes \Unit_{a-2}\right) \circ P_{b+c,a-2}		
			~.
		\end{align*}
		where the latter sign prefactor is given by:
		\begin{displaymath}
			{\mathcal{s}}={{\deg(\mu_c)(b+a)} +{\deg(\mu_b)(a-1)} +b(c+1)}
			~.
		\end{displaymath}
        In particular $\alpha(\ca; \mu_a,\mu_b,\mu_c)=0$ whenever $a=1$ and when $a=2$ it gives
        \begin{displaymath}
            \alpha(\ca; \mu_2,\mu_b,\mu_c) = (-1)^{\deg(\mu_b) + b\,(\deg(\mu_c)+ c+1)} \mu_2 \circ \left((\mu_b\otimes\mu_c \circ P_{b,c})\right)~.
        \end{displaymath}
        Moreover, the associator enjoys the following graded symmetry on the two right-most entries:
        \begin{displaymath}
            \alpha(\ca; \mu_a,\mu_b,\mu_c) = (-1)^{(\deg(\mu_b)+b-1)(\deg(\mu_c) + c -1 )} \alpha(\ca; \mu_a,\mu_c,\mu_b)~.
        \end{displaymath}
        We refer to \cite[Appendix B]{Miti2021} for further details.
    \end{remark}    
    Let us now focus on the case $W = V$. Although the Nijenhuis-Richardson product is not associative, it can be shown to be pre-Lie.
    \begin{definition}[Nijenhuis-Richardson algebra]\label{def:NR-algebra}
        The \emph{Nijenhuis-Richardson algebra} over $V$ is the Lie algebra $M^{\textrm{skew}}(V)$ given by the graded vector space of multibrackets
        \begin{displaymath}
            M^{\textrm{skew}}(V)^k := \bigoplus_{a+d-1 =k} \underline{\Hom}^{a,d}(V, V)~,
        \end{displaymath}
        together with the Lie bracket $\lbrack\cdot,\cdot\rbrack_\ca$ induced by the Nijenhuis-Richardson (pre-Lie) product $\ca$ defined in \eqref{eq:RN-product}.
    \end{definition}
    \begin{definition}[\texorpdfstring{$L_\infty$}{L-infinity}-algebra structure {\cite{Lada1993,Lada1995}}]
        An \emph{\(L_\infty\)-algebra structure} on the graded vector space \(L\) is given by a collection of homogeneous graded skew-symmetric multilinear maps
        \begin{displaymath}
            \mu_k: \wedge^k L \to L[2-k]~,
        \end{displaymath}
        for \(k \geq 1\), such that $\mu=\sum_{k \geq 1} \mu_k \in M^{\textrm{skew}}(L)$ is a Maurer-Cartan element in the  the Nijenhuis-Richardson algebra.
        A pair $(L, \mu)$ is called an \emph{\(L_\infty\)-algebra}.
    \end{definition}
    \begin{example}
        A DGLA is an $L_\infty$-algebra with $\mu_k=0$ for any $k\geq 3$.
        In this case, the Maurer-Cartan equation reads
        \begin{align*}
               & \mu_1 \ca \mu_1= 0 &\qquad& {\text{in arity $1$,}}
                \\
                &\mu_1 \ca \mu_2 = - \mu_2\ca \mu_1 &\qquad& {\text{in arity $2$,}}
                \\
                &\mu_2\ca\mu_2 = 0 &\qquad& {\text{in arity $3$.}}
        \end{align*}
    \end{example}
\begin{remark}\label{rmk:coalgebraic-approach-Linfty-algebras}
    A more conceptual characterization of $L_\infty$-algebras is naturally expressed in the language of differential graded cocommutative coalgebras.  
    Given a graded vector space $L$, one can consider the reduced cofree cocommutative coalgebra $C(L[1]) = S^{\ge1}(L[1])$ cogenerated by the suspension $L[1]$ and endowed with its canonical coproduct. 
    An $L_\infty$-structure on $L$, specified by a family of graded-symmetric multilinear maps $\mu_k:S^k(L[1])\to L[1]$, can be equivalently encoded by a single degree-$1$ coderivation 
    $$Q_\mu : C(L[1]) \longrightarrow C(L[1])~,$$
    obtained as the unique lift of the shifted sum $\mu = \mu_1 + \mu_2 + \cdots$.  
    The higher Jacobi identities are then compactly expressed by the condition $Q_\mu^2=0$, so that an $L_\infty$-algebra $(L,\mu)$ corresponds precisely to a differential graded cocommutative coalgebra $(C(L[1]),Q_\mu)$.  
    In this setting, it is particularly easy to make sense of morphisms between $L_\infty$-algebras as coalgebra maps interwining the differential (see \autoref{rmk:coalgebraic-approach-Linfty-morphisms}) and thus completely define the category $\Linfty$.
\end{remark}

%-.-.-.-.-.-.-.-.-.-.-.-.-.-.-.-.-.-.-.-.-.-.-.-.-.-.-.-.-.-.-+
\subsection{\texorpdfstring{$L_\infty$}{L-infinity}-morphisms between DGLAs as Maurer-Cartan elements}
%-.-.-.-.-.-.-.-.-.-.-.-.-.-.-.-.-.-.-.-.-.-.-.-.-.-.-.-.-.-.-+
    A differential graded Lie algebra (DGLA) is a particular case of an $L_\infty$-algebra where only the unary and binary brackets are non-trivial.
    As such, DGLAs form a subcategory of the category of $L_\infty$-algebras.
    However, this subcategory is not full, as there exist $L_\infty$-morphisms between DGLAs that are not strict morphisms of DGLAs.
    %\antonio{inserire cappello in cui si motiva perché si può parlare di morfismi L-infinity tra DGLAs}
%
    \begin{definition}[\texorpdfstring{$L_\infty$}{L-infinity}-morphisms between DGLAs {\cite[\S 4]{Kontsevich2003}}]
    \label{def:Linfty-morphism-DGLAs}
    Let $\g$ and $\h$ be two differential graded Lie algebras.
    An \emph{\(L_\infty\)-morphism} from \(\g\) to \(\h\) consists of a collection of skew-symmetric multilinear maps
    \begin{displaymath}
        \Phi_k : \g^{\otimes k} \to \h[1-k]~,
    \end{displaymath}
    for \(k \geq 1\), such that the following compatibility conditions holds
    \begin{equation}\label{eq:Linfty-morphism-DGLAs}
        \Phi_{k} \ca \d_\g + \Phi_{k-1} \ca [\cdot,\cdot]_\g =   \d_\h \circ \Phi_{k} + [\cdot,\cdot]_\h\circ {\Sop_{2,k}(\Phi)} ~,
    \end{equation}
    where \(\ca\) denotes the Nijenhuis-Richardson composition of multilinear maps as per \eqref{eq:RN-product}, 
    the operator $\Sop_{2,k}(\Phi)$ is given by all the possible ways to tensor multiply two components of $\Phi$ to give an operator of arity $k$, i.e., 
    \begin{displaymath}
        \Sop_{2,k}(\Phi) = 
        \sum_{1\leq i \leq \lfloor \frac{k}{2}\rfloor}  
        { (-1)^{\deg(\Phi_{i})}} \left(\Phi_{i} \otimes \Phi_{k-i}\right)\circ P^{<}_{i,k-i} ~,
    \end{displaymath}
    %
    %\antonio{Ho confrontato in varie fonti. Il segno ${ \pm}$ è proprio $(-1)^{|\Phi_{i}|}=(-1)^{1-i}$, e $\d_\h$ appare a destra con il più!}
    %
    and $P^{<}_{i,k-i}$ is a modified version of \eqref{eq:unshuffle-permutator} that runs only over all ordered $(i,k-i)$-unshuffles\footnote{The set of ordered unshuffles $\ush{(i,j)}^<$ is a subgroup of $\ush{(i,j)}$ given by those unshuffles $\sigma$ such that $\sigma(1)<\sigma(i+1)$ whenever $i=j$.}, i.e.
    \begin{displaymath}
        P^{<}_{i,j}= \begin{cases}
            \mathbb{1}_{i+j} & \text{if } i=j~, \\
            P_{i,j} & \text{if } i\neq j~.
        \end{cases}
    \end{displaymath}
    We denote with $\Hom_\infty(\g,\h)$ the set of all \(L_\infty\)-morphisms between the two DGLAs \(\g\) and \(\h\).
\end{definition}

\begin{remark}\label{rmk:coalgebraic-approach-Linfty-morphisms}
    Within the coalgebraic viewpoint of \autoref{rmk:coalgebraic-approach-Linfty-algebras}, given two $L_\infty$-algebras $(L,\mu)$ and $(L',\mu')$, an $L_\infty$-morphism $\Phi:L\to L'$ is simply a coalgebra morphism 
    $$
        F:C(L[1]) \longrightarrow C(L'[1])
    $$
    compatible with the codifferentials, that is, $F\circ Q_\mu = Q_{\mu'}\circ F$.  
    Since $C(L[1])$ is cofreely cogenerated by $L[1]$, such a morphism is entirely determined by its Taylor coefficients $F_n^1:S^n(L[1])\to L'[1]$, which satisfy a hierarchy of quadratic relations encoding the homotopy nature of the $L_\infty$-structure.  
    Ordinary DGLA morphisms appear as the \emph{strict} case, in which all higher Taylor components vanish.
\end{remark}

\begin{remark}\label{rmk:Linfty-morphism-DGLAs}
    The first instances of condition \eqref{eq:Linfty-morphism-DGLAs}, involving only components up to arity three, read as follows:
    \begin{align}
        \Phi_1 \circ \d_\g (v) &=~ \d_\h \circ \Phi_1 (v)  ~,
        \label{eq:Linfty-morphism-DGLAs-arity-1}
        \\[.5em]
                \Phi_1([v_1,v_2]_\g) - \Phi_2\left(\d_\g(v_1),v_2\right) -(-1)^{|v_1|} \Phi_2(v_1, \d_\g v_2) &=~  [\Phi_1(v_1),\Phi_2(v_2)]_\h + \d_\h \left( \Phi_2(v_1,v_2)\right) ~,
        \label{eq:Linfty-morphism-DGLAs-arity-2}
    \end{align}
    \begin{align}
        &\phantom{=~}\Phi_3\left( \d_\g(v_1),v_2,v_3\right) -(-1)^{|v_1||v_2|} \Phi_3\left(\d_\g(v_2),v_1, v_3\right) +(-1)^{|v_3|(|v_1|+|v_2|)} \Phi_3\left(\d_\g(v_3),v_1, v_2 \right) 
        \label{eq:Linfty-morphism-DGLAs-arity-3}
         \\
        &\phantom{=~}+ \Phi_2([v_1,v_2]_\g,v_3) -(-1)^{|v_2||v_3|} \Phi_2([v_1,v_3]_\g,v_2) +(-1)^{|v_1|(|v_2|+|v_3|)} \Phi_2( [v_2,v_3]_\g,v_1) =
        \notag \\
        &=~ 
        \d_\h\, \Phi_3(v_1,v_2,v_3) +
        \notag \\
        &\phantom{=~}+
        (-1)^{|v_1|} [\Phi_1(v_1),\Phi_2(v_2,v_3)]_\h -
        (-1)^{(|v_1+1)||v_2|} [\Phi_1(v_2),\Phi_2(v_1,v_3)]_\h +
        (-1)^{(|v_1|+|v_2|+1)|v_3|} [\Phi_1(v_3),\Phi_2(v_1,v_2)]_\h
        \notag 
    \end{align}
    for any homogeneous elements $v, v_1, v_2, v_3 \in \g$.
\end{remark}

    Also, $L_\infty$-morphisms between DGLAs can be characterized as Maurer-Cartan elements of a suitable DGLA (see, for instance, \cite{Fiorenza2007}), as we now recall.
    Let $\g$ and $\h$ be two DGLAs; one can naturally endow the bigraded vector space of multilinear maps $\underline{\Hom}(\g,\h)$ with a binary bracket
    \begin{equation}\label{eq:bracket-Hom-DGLAs}
        \morphism{[\blank,\blank]_{\underline{\Hom}}}{\underline{\Hom}^{a_1,d_1}({\g},{\h})\otimes \underline{\Hom}^{a_2,d_2}({\g},{\h})}{\underline{\Hom}^{a_1+a_2,d_1+d_2}({\g},{\h})}
        {f,g}
        {-(-1)^{d_1 (a_2 + d_2)}~\lbrack\cdot,\cdot\rbrack_\h \circ (f \otimes g) \circ P_{a_1,a_2}}~,
    \end{equation}
    % {{-}(-1)^{d_1 (a_2 + d_2)}}
%    \antonio{
%        Cambio il segno da
%        $$ [f,g]_{\underline{\Hom}} = {\color{purple}(-1)^{a_1(a_2+ d_2)}} [\cdot,\cdot]_\h \circ (f \otimes g) \circ P_{a_1,a_2}$$
%        a
%        \begin{align*}
%            [f,g]_{\underline{\Hom}} 
%            =&~ {(-1)^{d_1( a_2 + d_2)}} [\cdot,\cdot]_\h \circ ( g \otimes f) \circ P_{a_2,a_1}
%        \end{align*}
%        usando il fatto che
%        \begin{displaymath}
%            f \otimes g = -(-1)^{a_1 a_2}(-1)^{d_1 d_2} \mathrm{C}_{(2)} \circ g \otimes f \circ \mathrm{C}_{(a_1+a_2)}^{a_1}~,
%        \end{displaymath}
%        Dove $\mathrm{C}_{(n)}$ è l'operatore che performa la permutazione ciclica di $n$ elementi introdotto nell'appendice di \cite{Miti2021}, si verifica che è ancora graded skew.
%
%        Aggiungo l'ulteriore segno ${-}$ affinché, in \autoref{lem:Linfty-morphisms-as-MC-elements}, compaia l'equazione giusta.
%
%    }
    a vertical differential (raising the degree of the given multilinear map $f$)
    \begin{equation}
        \label{eq:vertical-differential-Hom-DGLAs}
        \morphism{\d_{(0,1)}}{\underline{\Hom}^{a,d}({\g},{\h})}{\underline{\Hom}^{a,d+1}({\g},{\h})}
        {f}
         { f \ca \d_\g -{ (-1)^{a+d-1}}\d_\h \ca f}~,
  \end{equation}
%    \antonio{Il segno è scelto opportunamente. Quando $f$ è una componente di un morfismo $L_\infty$, allora $a+d-1=0$.}
%    \antonio{Cambio di notazione rispetto a \cite{Fiorenza2009} dove si definiva:
 %       \[
%        d_{1,0}^{}\colon
%        \underline{\Hom}^{p,q}({\mathfrak g},{\mathfrak h})\to \underline{\Hom}^{p+1,q}({\mathfrak g},{\mathfrak h}
%        )\]
%        given by
%        \[
%            (d_{1,0}^{}{f})(\gamma_1^{}\wedge\cdots\wedge\gamma_q^{})=d_{\mathfrak h}(f(\gamma_1^{}\wedge\cdots\wedge\gamma_q^{}))+\sum_i
%            \pm f(\gamma_1\wedge\cdots
%            \wedge d_{\mathfrak g}\gamma_i\wedge
%            \cdots\wedge\gamma_{q+1}^{})
%        \]
%    }    
    and a horizontal differential (raising the arity of the given multilinear map $f$)
    \begin{equation}
        \label{eq:horizontal-differential-Hom-DGLAs}
        \morphism{\d_{(1,0)}}{\underline{\Hom}^{a,d}({\g},{\h})}{\underline{\Hom}^{a+1,d}({\g},{\h})}
        {f}
        {f \ca [\cdot,\cdot]_\g}~.
    \end{equation}
    \begin{lemma}
        The bigraded vector space $\underline{\Hom}(\g,\h)$ endowed with the bracket $[\cdot,\cdot]_{\underline{\Hom}}$ in \eqref{eq:bracket-Hom-DGLAs} and the differentials $\d_{(0,1)}$ and $\d_{(1,0)}$ in \eqref{eq:vertical-differential-Hom-DGLAs} and \eqref{eq:horizontal-differential-Hom-DGLAs} is a bidifferential graded Lie algebra.
        In particular, the total differential $\d_{\textrm{tot}} = \d_{(1,0)} + \d_{(0,1)}$ squares to zero and is a derivation of the bracket $[\cdot,\cdot]_{\underline{\Hom}}$.
        \label{lem:Hom-DGLAs-is-bidgla}
    \end{lemma}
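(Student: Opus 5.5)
The plan is to avoid checking the bidifferential identities from scratch. Instead, I would identify $\underline{\Hom}(\g,\h)$ with a familiar convolution-type DGLA, where the identities hold for structural reasons, and then match the signs in \eqref{eq:bracket-Hom-DGLAs}, \eqref{eq:vertical-differential-Hom-DGLAs} and \eqref{eq:horizontal-differential-Hom-DGLAs}. Concretely, $\underline{\Hom}(\g,\h)$ should be $\Hom(C(\g[1]),\h)$, the graded linear maps from the reduced cofree cocommutative coalgebra of \autoref{rmk:coalgebraic-approach-Linfty-algebras} to $\h$, suitably shifted. Under this identification the total degree of $f\in\underline{\Hom}^{a,d}$ is $a+d-1$.

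This space carries the convolution bracket $[f,g]=[\cdot,\cdot]_\h\circ(f\otimes g)\circ\Delta$. Because $\Delta$ is cocommutative and coassociative and $\h$ is a graded Lie algebra, the convolution bracket is a graded Lie bracket. The reduced coproduct, restricted to $\wedge^{a_1+a_2}$ and projected onto $\wedge^{a_1}\otimes\wedge^{a_2}$, is exactly the unshuffle operator $P_{a_1,a_2}$. This recovers \eqref{eq:bracket-Hom-DGLAs} up to the décalage sign $-(-1)^{d_1(a_2+d_2)}$.

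The two differentials come from the two pieces of structure on each side. The codifferential $Q_\g$ on $C(\g[1])$ splits as $Q_1+Q_2$, with $Q_1$ the coderivation lifting $\d_\g$ and $Q_2$ the one lifting $[\cdot,\cdot]_\g$. Precomposition gives $f\mapsto \pm f\circ Q_1=\pm f\ca\d_\g$ and $f\mapsto\pm f\circ Q_2=\pm f\ca[\cdot,\cdot]_\g$, which are $\d_{(0,1)}$ (its first term) and $\d_{(1,0)}$. Postcomposition $f\mapsto\d_\h\circ f$ gives the other term of $\d_{(0,1)}$.

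The bidifferential identities then follow from these pieces:
\begin{itemize}
\item $\d_{(0,1)}^2=0$, $\d_{(1,0)}^2=0$ and $\d_{(0,1)}\d_{(1,0)}+\d_{(1,0)}\d_{(0,1)}=0$ follow from $Q_1^2=0$, $Q_2^2=0$ and $Q_1Q_2+Q_2Q_1=0$. These in turn are the arity components of $Q_\g^2=0$, i.e.\ $\d_\g^2=0$, the Jacobi identity, and the Leibniz rule in $\g$.
\item $\d_\h^2=0$ holds, and pre- and postcomposition commute up to the Koszul sign, which handles the mixed terms involving $\d_\h$.
\item The derivation property of precomposition with $Q_i$ follows because each $Q_i$ is a coderivation: $\Delta\circ Q_i=(Q_i\otimes 1+1\otimes Q_i)\circ\Delta$.
\item The derivation property of postcomposition with $\d_\h$ is the Leibniz rule in $\h$.
\end{itemize}
Bracket compatibility is again the arity-by-arity splitting of a single identity. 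For an independent check in low arity, one can use the associator formula in \autoref{rmk:associator-Nijenhuis-Richardson} with $a=2$ to verify $\d_{(1,0)}^2=0$ directly: $f\ca[\cdot,\cdot]\ca[\cdot,\cdot]$ reduces to $f$ composed with the Jacobiator, which vanishes.

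I expect the main obstacle to be the sign bookkeeping. The paper's conventions use skew-symmetric maps with the prefactor $(-1)^{(a_1-1)d_2}$ in $\ca$, the extra minus sign and $(-1)^{d_1(a_2+d_2)}$ in the bracket, and $(-1)^{a+d-1}$ in $\d_{(0,1)}$. Each of these must agree with the Koszul signs produced by décalage $\wedge^a\g\cong S^a(\g[1])[-a]$.

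My first step would be to fix the décalage isomorphism of \cite[\S 10.6]{Manetti2022}, transport the symmetric convolution structure through it, and confirm on the lowest arity cases ($a_1=a_2=1$, arbitrary degrees) that the resulting signs coincide with the paper's. A sign mismatch by a global factor of the form $(-1)^{\text{(degree-dependent)}}$ would only amount to rescaling the generators, which is an isomorphism and so would not affect the conclusion. Finally, as a sanity check that the conventions are right, I would verify that the Maurer–Cartan equation in total degree $1$ reproduces \eqref{eq:Linfty-morphism-DGLAs}.
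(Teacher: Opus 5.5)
Your route differs from the paper's. The paper checks every identity by hand in the skew-symmetric Nijenhuis--Richardson formalism: associators of $\ca$ and their graded symmetry give $\d_{(0,1)}^2=\d_{(1,0)}^2=0$ and the anticommutation, and an unshuffle identity plus explicit sign chasing gives the derivation property. You instead want everything from the convolution DGLA $\Hom(S^c(\g[1]),\h)$ with $D\tilde f=\d_\h\circ\tilde f-(-1)^{|\tilde f|}\tilde f\circ(Q_1+Q_2)$, and that structural half of your argument is fine. The lemma, however, asserts that the operations with the \emph{specific} signs of \eqref{eq:bracket-Hom-DGLAs}, \eqref{eq:vertical-differential-Hom-DGLAs} and \eqref{eq:horizontal-differential-Hom-DGLAs} form a bidifferential graded Lie algebra. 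So its real content is identifying these formulas with the transported convolution operations. That is exactly the step you defer, and as written your plan for it has a genuine gap.

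First, the degree is off. The map $\tilde f$ attached to $f\in\underline{\Hom}^{a,d}(\g,\h)$ has degree $a+d$, not $a+d-1$; the latter is the Nijenhuis--Richardson degree of maps into $\g[1]$. The grading $a+d$ is the one in which the paper's bracket is graded skew, it produces the sign $(-1)^{a_1+d_1}$ in the derivation rule, and it puts $L_\infty$-morphisms in degree $1$.

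Second, the signs do not coincide, already in arity one. Take $\tilde f=f\circ(s^{-1})^{\otimes a}$, $q_1=-s\,\d_\g\,s^{-1}$ and $q_2=s\circ[\cdot,\cdot]_\g\circ(s^{-1}\otimes s^{-1})$. Then the convolution bracket and differential transport to
\[
(-1)^{a_1d_2}\,[\cdot,\cdot]_\h\circ(f\otimes g)\circ P_{a_1,a_2}
\qquad\text{and}\qquad
\d_\h\circ f+(-1)^{a+d}f\ca\d_\g+(-1)^{d}f\ca[\cdot,\cdot]_\g~.
\]
These differ from the paper's formulas by bidegree-dependent signs; in particular, $-(-1)^{d_1(a_2+d_2)}$ is not the sign décalage produces.

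Third, your fallback that such a mismatch ``only amounts to rescaling'' is not automatic. A rescaling $f\mapsto\phi(a,d)f$ changes the bracket by $\phi(a_1,d_1)\phi(a_2,d_2)\phi(a_1+a_2,d_1+d_2)$, and the two differentials by $\phi(a,d)\phi(a,d+1)$ and $\phi(a,d)\phi(a+1,d)$. You must therefore exhibit one $\phi$ absorbing all three mismatches in every arity; if none existed, the stated formulas would not define a DGLA at all. A check at $a_1=a_2=1$ cannot establish this.

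Here such a $\phi$ does exist, for example $\phi(a,d)=-(-1)^{ad+d(d-1)/2}$. With it, $f\mapsto\phi(a,d)\tilde f$ is a bidegree-preserving isomorphism. It carries $[\cdot,\cdot]_{\underline{\Hom}}$ to the convolution bracket, $\d_{(0,1)}$ to $\tilde f\mapsto\d_\h\circ\tilde f-(-1)^{a+d}\tilde f\circ Q_1$, and $\d_{(1,0)}$ to $\tilde f\mapsto-(-1)^{a+d}\tilde f\circ Q_2$.

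Once you write out this all-arity transport and verify $\phi$, your argument becomes a complete proof, and a more conceptual one than the paper's. Without it, it proves the statement only for some unspecified sign-variant of the structure.
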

    \begin{proof}
        We need to show that $\d_{(1,0)}$ and $\d_{(0,1)}$ are differentials that anticommute with each other and that they are derivations of the bracket $[\cdot,\cdot]_{\underline{\Hom}}$.
        Recall that $\d_\h \ca f = \d_\h \circ f$. 
        For the vertical differential, we have
        \begin{align*}
            \d_{(0,1)} \d_{(0,1)} (f) 
            &=~
            (\d_{(0,1)} (f)) \ca \d_\g -{(-1)^{a+d}} \d_\h \ca (\d_{(0,1)} (f))
            \\
            &=~
            (f \ca \d_\g - {(-1)^{a+d-1}}\d_\h \ca f) \ca \d_\g -{(-1)^{a+d}} \d_\h \ca (f \ca \d_\g -{(-1)^{a+d-1}} \d_\h \ca f)
            \\
            &=~
            (f \ca \d_\g) \ca \d_\g +{(-1)^{a+d}}\left( (\d_\h \ca f) \ca \d_\g - \d_\h \ca (f \ca \d_\g)\right) - \d_\h \ca (\d_\h \ca f)
            \\
            &=~
            f \ca \cancel{(\d_g \ca \d_g)} + \alpha(\ca; f, \d_\g, \d_\g) +{(-1)^{a+d}}\cancel{\alpha(\ca; \d_\h, f, \d_\g)} +  \cancel{(\d_\h \ca \d_\h)} \ca f - \cancel{\alpha(\ca; \d_\h, \d_\h, f)}
            \\
            &=~
            \cancel{ \alpha(\ca; f, \d_\g, \d_\g)} =0 ,
        \end{align*}
        where the last cancellation follows from the graded symmetry of the associator in \autoref{rmk:associator-Nijenhuis-Richardson} which implies $\alpha(\ca; f, \d_\g, \d_\g) =-\alpha(\ca; f, \d_\g, \d_\g)$
        \\
        For the horizontal differential, we have
        \begin{align*}
            \d_{(1,0)} \d_{(1,0)} (f) 
            &=~
            \d_{(1,0)} (f \ca [\cdot,\cdot]_\g)
            \\
            &=~
            (f \ca [\cdot,\cdot]_\g) \ca [\cdot,\cdot]_\g
            \\
            &=~
            f \ca \cancel{([\cdot,\cdot]_\g \ca [   \cdot,\cdot]_\g)} + \cancel{\alpha(\ca; f, [\cdot,\cdot]_\g, [\cdot,\cdot]_\g)}
        \end{align*}
        where the first cancellation follows from the Jacobi identity in $\g$ and the second from the symmetry of the associator in \autoref{rmk:associator-Nijenhuis-Richardson}.
        \\
        By direct inspection, one can check that vertical and horizontal differentials anticommute:
        \begin{align*}
            \d_{(1,0)} \d_{(0,1)} (f) 
            &=~
            \d_{(1,0)} (f \ca \d_\g - {(-1)^{a+d-1}} \d_\h \ca f)
            \\
            &=~
            \left(f \ca \d_\g  - {(-1)^{a+d-1}} (\d_\h \ca f)\right) \ca [\cdot,\cdot]_\g
            \\
            \\
            \d_{(0,1)} \d_{(1,0)} (f) 
            &=~
            \d_{(0,1)} (f \ca [\cdot,\cdot]_\g)
            \\
            &=~
            (f \ca [\cdot,\cdot]_\g) \ca \d_\g - {(-1)^{a+d}} \d_\h \ca (f \ca [\cdot,\cdot]_\g)
            \\
            &=~
            f \ca ([\cdot,\cdot]_\g \ca \d_\g) + \alpha(\ca; f, [\cdot,\cdot]_\g, \d_\g) - {(-1)^{a+d}}  (\d_\h \ca f) \ca [\cdot,\cdot]_\g \pm \cancel{\alpha(\ca; \d_\h, f, [\cdot,\cdot]_\g)}
            \\
            &=~
            -f \ca (\d_\g\ca[\cdot,\cdot]_\g) - \alpha(\ca; f, \d_\g, [\cdot,\cdot]_\g) - {(-1)^{a+d}}  (\d_\h \ca f) \ca [\cdot,\cdot]_\g 
            \\
            &=~
            -(f\ca \d_\g) \ca [\cdot,\cdot]_\g 
            \cancel{+\alpha(\ca; f, \d_\g, [\cdot,\cdot]_\g)
            -\alpha(\ca; f, \d_\g, [\cdot,\cdot]_\g)} 
            - {(-1)^{a+d}}  (\d_\h \ca f) \ca [\cdot,\cdot]_\g 
            \\
            &=~
            -\d_{(1,0)} \d_{(0,1)} (f)
        \end{align*}
        therefore $\d_{\textrm{tot}}=\d_{(1,0)} + \d_{(0,1)}$ is a honest differential.
        \\
        Finally, we need to show that both differentials are derivations of the bracket $[\blank,\blank]_{\underline{\Hom}}$ i.e. that for any $f \in \underline{\Hom}^{a_1,d_1}(\g,\h)$ and $g \in \underline{\Hom}^{a_2,d_2}(\g,\h)$ it holds
        \begin{align*}
            \d_{\textrm{tot}} \circ [\blank,\blank]_{\underline{\Hom}} (f,g) 
            &=~
            %\pm 
            [\blank,\blank]_{\underline{\Hom}}(\d_{\textrm{tot}} f, g) + 
            (-1)^{a_1+d_1} [\blank,\blank]_{\underline{\Hom}}(f, \d_{\textrm{tot}} g)~.
        \end{align*}
%        where $|f|=a_1 + d_1$ and $|g|=a_2 + d_2$ are the total degrees of $f$ and $g$ respectively.        \\
            For the horizontal differential, we have
            \begin{align*}
             \d_{(1,0)} \circ [\blank,\blank]_{\underline{\Hom}} (f,g)
            &=~
            \left( [f,g]_{\underline{\Hom}}\right) \ca [\cdot,\cdot]_\g
            \\
            &=~
            \left({{-}(-1)^{d_1 (a_2 + d_2)}}~
            \lbrack\cdot,\cdot\rbrack_\h \circ (f \otimes g) \circ P_{a_1,a_2} \right)\ca [\cdot,\cdot]_\g
            \\
            &=~
            {{-}(-1)^{d_1 (a_2 + d_2)}}{(-1)^{(a_1+a_2-1)0}}~\left(\lbrack\cdot,\cdot\rbrack_\h \circ (f \otimes g) \circ P_{a_1,a_2} \right) \circ \left([\cdot,\cdot]_\g \otimes \mathbb{1}_{a_1 + a_2 -1}\right) \circ P_{2,a_1 + a_2 -1}
            \end{align*}
            Noticing that for any graded skew-symmetric $n$-ary map $\mu_n$ one has (cf. \cite[Lem. C.3.3]{Miti2021})
        \begin{displaymath}
            (P_{a_1,a_2}) \circ (\mu_n \otimes \mathbb{1}_{a_1+a_2-1}) \circ (P_{n,a_1+a_2-1}) 
	        =
        	(\mu_n \otimes \mathbb{1}_{a_1+a_2-1}) \circ
			(P_{n,a_1-1,a_2}) +
	    	(-1)^{a_1(n+1)}~
		    \left(\mathbb{1}_{a_1}\otimes
		    \mu_n\otimes \mathbb{1}_{a_2-1}
		    \right) \circ
		    (P_{a_1,n,a_2-1})	
		    ~,
        \end{displaymath}
        we can rewrite the last term as the sum of two terms.
        The first one reads
        \begin{align*}
            &{{-}(-1)^{d_1 (a_2 + d_2)}}~
            \lbrack\cdot,\cdot\rbrack_\h \circ (f \otimes g) \circ 
            \left(
                \lbrack\cdot,\cdot\rbrack_\g \otimes \mathbb{1}_{a_1+a_2-1}
            \right)
            \circ P_{2,a_1-1, a_2}
            \\
            =&~
            {{-}(-1)^{d_1 (a_2 + d_2)}}~
            \lbrack\cdot,\cdot\rbrack_\h \circ \left( (f \ca [\cdot,\cdot]_\g) \otimes g\right) \circ P_{a_1+1,a_2}
            \\
            =&~
            {{-}(-1)^{d_1 (a_2 + d_2)}}~
            \lbrack\cdot,\cdot\rbrack_\h \circ \left( (\d_{(1,0)} f) \otimes g\right) \circ P_{a_1+1,a_2} 
            \\
            =&~
            %{(-1)^{a_2+d_2}}
            [\blank,\blank]_{\underline{\Hom}}(\d_{(1,0)} f, g)
        \end{align*}
        The second one reads
        \begin{align*}
            &{{-}(-1)^{d_1 (a_2 + d_2)}}{(-1)^{a_1(2 + 1)}}~
            \left(\lbrack\cdot,\cdot\rbrack_\h \circ (f \otimes g) \circ 
            \left(\mathbb{1}_{a_1}\otimes
        \lbrack\cdot,\cdot\rbrack_\g\otimes \mathbb{1}_{a_2-1}
            \right) \circ P_{a_1, 2, a_2 -1}\right)
            \\
            =&~
            {{-}(-1)^{d_1 (a_2 + d_2)}}{(-1)^{a_1}}~
            \lbrack\cdot,\cdot\rbrack_\h \circ \left( f \otimes (g \ca [\cdot,\cdot]_\g)\right) \circ P_{a_1, a_2+1}
            \\
            =&~
            {{-}(-1)^{d_1 (a_2 + d_2)}}{(-1)^{a_1}}~
            \lbrack\cdot,\cdot\rbrack_\h \circ \left( f \otimes (\d_{(1,0)} g)\right) \circ P_{a_1, a_2+1}
            \\
            =&~
            {(-1)^{a_1+d_1}}
            [\blank,\blank]_{\underline{\Hom}}(f, \d_{(1,0)} g)~.
        \end{align*}

        For the vertical differential, we have
        \begin{align*}
            \d_{(0,1)} \circ [\blank,\blank]_{\underline{\Hom}} (f,g) 
            &=~
            ( [f,g]_{\underline{\Hom}}) \ca \d_\g - {(-1)^{a_1+a_2+d_1 + d_2 -1}} \d_\h \ca ( [f,g]_{\underline{\Hom}}  )
            \\
            &=~
            {{-}(-1)^{d_1 (a_2 + d_2)}}~\lbrack\cdot,\cdot\rbrack_\h \circ (f \otimes g) \circ P_{a_1,a_2} \ca \d_\g
            \\
            &-
            {(-1)^{a_1+a_2+d_1 + d_2 -1}} \d_\h \ca \left( {{-}(-1)^{d_1 (a_2 + d_2)}}~\lbrack\cdot,\cdot\rbrack_\h \circ (f \otimes g) \circ P_{a_1,a_2} \right)
            \\
            &=~
                        {{-}(-1)^{d_1 (a_2 + d_2)}}{(-1)^{(a_1+a_2-1)1}}~\left(\lbrack\cdot,\cdot\rbrack_\h \circ (f \otimes g) \circ P_{a_1,a_2} \right) \circ \left(\d_\g \otimes \mathbb{1}_{a_1 + a_2-1}\right) \circ P_{1,a_1 + a_2 -1}
            \\
            &-
            {(-1)^{a_1+a_2+d_1 + d_2 -1}} 
            {{-}(-1)^{d_1 (a_2 + d_2)}}~
            \d_\h \circ \lbrack\cdot,\cdot\rbrack_\h \circ (f \otimes g) \circ P_{a_1,a_2}
        \end{align*}
        The first summand can be rewritten as the sum of two terms using the same trick as before 
        \begin{align*}
            &~
            {{-}(-1)^{d_1 (a_2 + d_2)}}{(-1)^{(a_1+a_2-1)1}}~
            \lbrack\cdot,\cdot\rbrack_\h \circ (f \otimes g)
            \left(
                \d_\g \otimes \mathbb{1}_{a_1 + a_2-1}
            \right) 
            \circ P_{1,a_1 -1, a_2}
            \\
            &+
            {{-}(-1)^{d_1 (a_2 + d_2)}}{(-1)^{(a_1+a_2-1)1}}~
            \lbrack\cdot,\cdot\rbrack_\h \circ (f \otimes g)
            \left(
                \mathbb{1}_{a_1}\otimes \d_\g \otimes \mathbb{1}_{a_2-1}
            \right) 
            \circ P_{a_1, 1, a_2-1}
            \\
            =&~
            {{-}(-1)^{d_1 (a_2 + d_2)}}{(-1)^{(a_1+a_2-1)1}}~
            {(-1)^{a_1-1}}~
            {(-1)^{d_2}}~
            \lbrack\cdot,\cdot\rbrack_\h \circ \left( (f \ca \d_\g) \otimes g\right) \circ P_{a_1+1,a_2} 
            \\
            &+
            {{-}(-1)^{d_1 (a_2 + d_2)}}{(-1)^{(a_1+a_2-1)1}}
            {(-1)^{a_2-1}}~
            \lbrack\cdot,\cdot\rbrack_\h \circ \left( f \otimes (g \ca \d_\g)\right) \circ P_{a_1, a_2+1} 
            \\
            =&~
            {(-1)^{(d_1+1) (a_2 + d_2)}}
            \lbrack\cdot,\cdot\rbrack_\h \circ \left( (f \ca \d_\g) \otimes g\right) \circ P_{a_1+1,a_2} 
            \\
            &+
            {(-1)^{d_1 (a_2 + d_2+1) +a_1+d_1}}~
            \lbrack\cdot,\cdot\rbrack_\h \circ \left( f \otimes (g \ca \d_\g)\right) \circ P_{a_1, a_2+1} 
            \\            
            =&~
            {}~
            [\blank,\blank]_{\underline{\Hom}}(f\ca \d_\g, g) + 
            {(-1)^{a_1+d_1}}~
            [\blank,\blank]_{\underline{\Hom}}(f, g \ca \d_\g)~.
        \end{align*}
        Similarly, the second summand can be rewritten, recognizing that 
        $$
            \d_\h \circ [\cdot,\cdot]_\h = 
            \d_\h \ca [\cdot,\cdot]_\h = 
            - [\cdot,\cdot]_\h \ca \d_\h = 
            [\cdot,\cdot]_\h \circ (\d_\h \otimes \mathbb{1} ) \circ P_{1,1} 
            = [\cdot,\cdot]_\h \circ (\d_\h \otimes \mathbb{1} + \mathbb{1} \otimes \d_\h)
        $$
        hence
        \begin{align*}
          -&~
            {(-1)^{a_1+a_2+d_1 + d_2 -1}} 
            {{-}(-1)^{d_1 (a_2 + d_2)}}~
            \d_\h \circ \lbrack\cdot,\cdot\rbrack_\h \circ (f \otimes g) \circ P_{a_1,a_2} 
            \\
            =&~ 
                      -
            {(-1)^{a_1+a_2+d_1 + d_2 -1}} 
            {{-}(-1)^{d_1 (a_2 + d_2)}}~
            [\cdot,\cdot]_\h \circ (\d_\h \otimes \mathbb{1} + \mathbb{1} \otimes \d_\h) \circ (f \otimes g) \circ P_{a_1,a_2}
            \\
            =&~ 
                      -
            {(-1)^{a_1+a_2+d_1 + d_2 -1}} 
            {{-}(-1)^{d_1 (a_2 + d_2)}}~
            \left\lbrace
            [\d_\h f, g]_\h + (-1)^{d_1} [f, \d_\h g]  \right\rbrace \circ P_{a_1,a_2}
            \\
            =&~ 
            { (-1)^{(d_1+1) (a_2 + d_2)} } 
            [{\color{black}(-1)^{a_1+d_1}}~ \d_h \ca f, g]_\h \circ P_{a_1,a_2}
            +
            { (-1)^{d_1 (a_2 + d_2 +1) + a_1 + d_1}} 
            [f, {\color{black}(-1)^{a_2 + d_2}}~ \d_h \ca g]_\h \circ P_{a_1,a_2}
            \\
            =&~
            {}~
            [\blank, \blank]_{\underline{\Hom}}({\color{black}(-1)^{a_1 + d_1}}~ \d_\h \ca f, g) +
            { (-1)^{a_1 + d_1}}~
            [\blank, \blank]_{\underline{\Hom}}(f, {\color{black}(-1)^{a_2 + d_2}}~ \d_\h \ca g)~.
        \end{align*}
        Putting all together, we have that the vertical differential is also a derivation of the bracket $[\cdot,\cdot]_{\underline{\Hom}}$.
    \end{proof}
 
%    \antonio{Cambio di notazione rispetto a \cite{Fiorenza2009} dove si definiva:
%    \[
%        d_{0,1}^{}\colon
%        \underline{\Hom}^{p,q}({\mathfrak g},{\mathfrak h})\to \underline{\Hom}^{p,q+1}({\mathfrak g},{\mathfrak h})
%    \]
%    given by
%    \[
%        (d_{0,1}^{}f)(\gamma_1\wedge\cdots\wedge\gamma_{q+1})=
%        \sum_{i<j}\pm f([\gamma_i,\gamma_j]^{}_{\mathfrak g}\wedge
%        \gamma_1\wedge\cdots
%        \wedge\widehat{\gamma_i}\wedge\cdots\wedge\widehat{\gamma_j}\wedge
%        \cdots\wedge\gamma_{q+1}^{}).
%    \]    
%    }    
    %
\begin{remark}[Sign conventions]
    Notice that our convention slightly differs from \cite{Fiorenza2009} and the reference therein in several aspects:
    \begin{itemize}
        \item In our definition of $\underline{\Hom}^{a,d}(V,W)$ in \eqref{eq:biggraded-Hom-arity-degree} we make an opposite choice for the order of the indices, taking the arity as the first (horizontal) index and the degree as the second (vertical) one.
        \item In equation \eqref{eq:bracket-Hom-DGLAs} we have an explicit sign prefactor ${(-1)^{d_1 (a_2 + d_2) }}$ that is not present in \cite{Fiorenza2009} which ensure in particuar the graded skew-symmetry of $[\blank,\blank]_{\underline{\Hom}}$ with respect to the total degree.
        \item In equation \eqref{eq:vertical-differential-Hom-DGLAs} we have an explicit sign prefactor ${(-1)^{a+d-1}}$ which, together with the sign implied by the operator $\ca$ appearing in equations \eqref{eq:vertical-differential-Hom-DGLAs} and \eqref{eq:horizontal-differential-Hom-DGLAs} (see equation \eqref{eq:RN-product}), ensure that $\d_{\textrm{tot}}=\d_{(1,0)} + \d_{(0,1)}$ yields an honest differential on the total complex.
        Moreover, this sign choice guarantees that when $f$ is a component of an $L_\infty$-morphism from $\g$ to $\h$, then ${a + d -1 =0}$.
        \item All together, these sign conventions guarantee that the Maurer-Cartan equation in the total DGLA $\CE(\g,\h)$ exactly reproduces the defining relations for $L_\infty$-morphisms between DGLAs as in \eqref{eq:Linfty-morphism-DGLAs}.
    \end{itemize}
\end{remark}

 By taking the total grading, one obtains a differential graded Lie algebra structure on the total complex.
\begin{definition}[Chevalley-Eilenberg-type DGLA associated with a pair of DGLAs]\label{def:CE-DGLA-Linfty-morphisms}
    The Chevalley-Eilenberg-type DGLA $\CE(\g,\h)$ associated with the pair of DGLAs $(\g,\h)$  
     is the total DGLA $\big( \underline{\Hom}(\g,\h),\d_{\mathfrak{ce}},[\cdot,\cdot]_{\mathfrak{ce}}\big)$ of the bigraded DGLA 
    $\big( \underline{\Hom}(\g,\h),\d_{(1,0)},\d_{(0,1)},[\cdot,\cdot]_{\underline{\Hom}}\big)$.
\end{definition}
\begin{remark}
    Notice that an homogeneous multilinear map $\underline{\Hom}^{a,d}(\g,\g)$ has total degree $(d + a)$  when seen as an element in $\CE(\g,\h)$ and degree $(d+a-1)$ when seen as element of the Nijenhuis-Richardson algebra of \autoref{def:NR-algebra}. In other words, the graded vector space underlying $\CE(\g,\g)$ is given by the suspension of the graded vector space underlying to $M^{\textrm{skew}}(\g)$.
\end{remark}
\begin{lemma}\label{lem:Linfty-morphisms-as-MC-elements}
    There is a one-to-one correspondence between the set of Maurer-Cartan elements $\MC(\CE(\g,\h))$ and the set $L_\infty(\g,\h)$ of $L_\infty$-morphisms from $\g$ to $\h$.
\end{lemma}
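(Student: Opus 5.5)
The plan is to unwind the Maurer--Cartan equation in $\CE(\g,\h)$ arity by arity and match each component with condition \eqref{eq:Linfty-morphism-DGLAs}. First we identify the candidates. A Maurer--Cartan element is a total-degree-$1$ element $\Phi=\sum_{a\ge 1}\Phi_a$ with $\Phi_a\in\underline{\Hom}^{a,d}(\g,\h)$. Since the total degree is $a+d$, we must have $d=1-a$, so $\Phi_a\colon\wedge^a\g\to\h[1-a]$. These are exactly the degrees of the components $\Phi_k$ in \autoref{def:Linfty-morphism-DGLAs}. The assignment $\Phi\mapsto(\Phi_k)_{k\ge1}$ is therefore a bijection at the level of underlying data. (Whether the sum is infinite or finite is handled either by working in the completed product over arity or by the standing nilpotency convention.) It remains to show that $\d_{\mathfrak{ce}}\Phi+\tfrac12[\Phi,\Phi]_{\mathfrak{ce}}=0$ holds if and only if \eqref{eq:Linfty-morphism-DGLAs} holds for every $k$.

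Next, decompose the Maurer--Cartan expression by arity. The horizontal differential raises arity by one, the vertical differential preserves it, and the bracket of $\Phi_i$ and $\Phi_j$ lands in arity $i+j$. So the arity-$k$ component of the Maurer--Cartan equation reads
\[
\d_{(0,1)}\Phi_k+\d_{(1,0)}\Phi_{k-1}+\tfrac12\sum_{i+j=k}[\Phi_i,\Phi_j]_{\underline{\Hom}}=0 .
\]
Each term is then evaluated separately. By \eqref{eq:horizontal-differential-Hom-DGLAs}, $\d_{(1,0)}\Phi_{k-1}=\Phi_{k-1}\ca[\cdot,\cdot]_\g$. By \eqref{eq:vertical-differential-Hom-DGLAs}, the sign prefactor $(-1)^{a+d-1}$ equals $1$ because $a+d=1$, so $\d_{(0,1)}\Phi_k=\Phi_k\ca\d_\g-\d_\h\circ\Phi_k$. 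For the quadratic term we use \eqref{eq:bracket-Hom-DGLAs}. Graded skew-symmetry of $[\cdot,\cdot]_{\underline{\Hom}}$ in total degree, where each $\Phi_i$ has odd total degree $1$, gives $[\Phi_i,\Phi_j]=[\Phi_j,\Phi_i]$. Hence $\tfrac12\sum_{i+j=k}$ collapses to a sum over $i\le\lfloor k/2\rfloor$, where the terms with $i\ne j$ appear once with coefficient $1$ and the diagonal term $i=j$ appears with coefficient $\tfrac12$.

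The main obstacle is matching this quadratic term with $-[\cdot,\cdot]_\h\circ\Sop_{2,k}(\Phi)$, including signs and the factor $\tfrac12$ on the diagonal. The sign $-(-1)^{d_i(a_j+d_j)}$ with $a_j+d_j=1$ becomes $-(-1)^{d_i}=-(-1)^{1-i}$, which reproduces the prefactor $(-1)^{\deg(\Phi_i)}$ in $\Sop_{2,k}$ up to the overall minus sign, and that minus sign moves the term to the right-hand side. For the diagonal term $i=j$, I would check that $[\cdot,\cdot]_\h\circ(\Phi_i\otimes\Phi_i)\circ P_{i,i}$ is twice the sum over ordered unshuffles, using the graded symmetry of $[\cdot,\cdot]_\h\circ(\Phi_i\otimes\Phi_i)$ under swapping the two blocks. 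This is exactly what $P^{<}_{i,i}$ encodes, so the $\tfrac12$ is absorbed. Rearranging then gives precisely \eqref{eq:Linfty-morphism-DGLAs}. As a sanity check, I would compare the resulting identities in arities $1,2,3$ with those listed in \autoref{rmk:Linfty-morphism-DGLAs}.

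Finally, since the correspondence on underlying data is bijective and the equations agree arity by arity, the map $\MC(\CE(\g,\h))\to L_\infty(\g,\h)$ is a bijection. Optionally, I would also remark that this agrees with the coalgebraic description in \autoref{rmk:coalgebraic-approach-Linfty-morphisms}, via the standard correspondence between Taylor coefficients and components.
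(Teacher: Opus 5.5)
Your proposal is correct and follows the paper's own proof. Both identify total-degree-one elements of $\CE(\g,\h)$ with families $\Phi_k\colon\wedge^k\g\to\h[1-k]$, use $k+\deg(\Phi_k)-1=0$ to get $\d_{(0,1)}\Phi_k=\Phi_k\ca\d_\g-\d_\h\circ\Phi_k$, and match the arity-$k$ component of the Maurer--Cartan equation with \eqref{eq:Linfty-morphism-DGLAs}. The paper states the identity $\tfrac12\sum_{i+j=k}[\Phi_i,\Phi_j]_{\underline{\Hom}}=-[\cdot,\cdot]_\h\circ\Sop_{2,k}(\Phi)$ in a single line, and your symmetry argument justifies it in more detail. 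This includes the factor $2$ on the diagonal, where $P^{<}_{i,i}$ must be read as the sum over ordered unshuffles, as in the paper's footnote.
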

\begin{proof}
    It is clear from \autoref{def:Linfty-morphism-DGLAs} that every component $\Phi_k$ of a given $L_\infty$-morphism $\Phi$ from $\g$ to $\h$ is a homogeneous degree $1$ element in $\CE(\g,\h)$.
    Denote by $\Phi$ the sum of all its components, 
    The Maurer-Cartan equation in $\CE(\g,\h)$ for $\Phi$ reads as
    \begin{displaymath}
        d_{\textrm{tot}} \Phi  + \frac{1}{2} [\Phi,\Phi]_{\underline{\Hom}} = 0~,
    \end{displaymath}
    where $\d_{\textrm{tot}}f = \d_{(1,0)}f +  \d_{(0,1)}f$, for any $f \in \underline{\Hom}^{a,d}(V)$ is the total differential in $\CE(\g,\h)$.
    The latter implies an equation for every value of arity $k\geq 1$ coinciding with \eqref{eq:Linfty-morphism-DGLAs} since the $k$-ary component of    $[\Phi,\Phi]_{\underline{\Hom}}$ corresponds to 
    \begin{align*}
        \sum_{i=1}^{k-1} {{-}(-1)^{(1-i)(k-i + 1 -k +i)}} [\cdot,\cdot]_\h \circ (\Phi_i \otimes \Phi_{k-i}) \circ P_{i,k-i} 
        =&~
        \sum_{i=1}^{k-1} {{-}(-1)^{(1-i)}} [\cdot,\cdot]_\h \circ (\Phi_i \otimes \Phi_{k-i}) \circ P_{i,k-i} 
        \\
        =&~
        {-}
        2 [\cdot,\cdot]_\h \circ \Sop_{2,k}(\Phi)~,
    \end{align*}
    and the $k$-ary component of $\d_{\textrm{tot}}\Phi$ is given by $\Phi_{k} \ca \d_\g + \Phi_{k-1} \ca [\cdot,\cdot]_\g -   \d_\h \circ \Phi_{k}$  since ${k + \deg(\Phi_k) -1 =0}$.
    
\end{proof}

%-.-.-.-.-.-.-.-.-.-.-.-.-.-.-.-.-.-.-.-.-.-.-.-.-.-.-.-.-.-.-+
\subsection{Homotopy equivalent \texorpdfstring{$L_\infty$}{L-infinity}-morphisms between DGLAs}
\label{appendix:MC-homotopy}
%-.-.-.-.-.-.-.-.-.-.-.-.-.-.-.-.-.-.-.-.-.-.-.-.-.-.-.-.-.-.-+
%
%    \antonio{Piccolo cappello categoriale.
%        \\- la DGLA "intervallo" (\cite[Ex. 5.5.5, Ex 5.6.9]{Manetti2022})
%        \\- DGLA come categoria omotopica \cite[Def~3.5.11]{Manetti2022}
%        \\- omotopia definita tramite l'intervallo
%    }
   One has a natural notion of homotopy between $L_\infty$-morphisms. While this is genarlaly true for $L_\infty$-morphisms between arbitrary $L_\infty$-algebras, hre we content ourselves with considering the case of $L_\infty$-morphisms between DGLAs, that is all we need in the present article.
    \begin{definition}[Homotopy between \texorpdfstring{$L_\infty$}{L-infinity}-morphisms between DGLAs]
     Let $\g,\h$ be two DGLAs and let $f,g$ be two $L_\infty$-morphisms from $\g$ to $\h$.
    An \emph{\(L_\infty\)-homotopy} from \(f\) to \(g\) is an \(L_\infty\)-morphism 
    \begin{displaymath}
        H: \g \to \h\otimes \Omega^\bullet(\Delta^1),% \otimes \Omega^\bullet(\Delta^1)~,
    \end{displaymath}
    where $\Omega^\bullet(\Delta^1)\cong \mathbb{k}[t,dt]$ is the graded commutative algebra of (polynomial) differential forms on the 1-simplex,
    such that the following diagram commutes:
    \begin{displaymath}
        \begin{tikzcd}
            &[1em] & [1em] \h 
            \\
            \g \ar[r,"H"] \ar[urr,bend left, "f"] \ar[drr, bend right, "g"'] 
            &  \Omega^\bullet(\Delta^1) \ar[dr, "{\mathrm{ev}_0}"']\ar[ur, "{\mathrm{ev}_1}"] 
            \\
            & & \h 
        \end{tikzcd}
    \end{displaymath}
    where $\mathrm{ev}_{0}, \mathrm{ev}_{1}$  is the evaluation map at the boundary points $0$ and $1$ of the 1-simplex $\Delta^1$.
    We denote with $\Hom_{\infty-{\mathrm{hotop}}}(\g,\h)$ the set of all \(L_\infty\)-homotopies between \(L_\infty\)-morphisms from \(\g\) to \(\h\).
    \end{definition}
    \begin{remark}[$L_\infty$-homotopies as Maurer-Cartan elements]\label{rmk:Linfty-homotopies-as-MC-elements}
        According to \autoref{lem:Linfty-morphisms-as-MC-elements}, one can characterize \(L_\infty\)-homotopies between DGLAs as Maurer-Cartan elements in $\CE(\g,\h[t,\d t])$.
        Notice that $\CE(\g,\h[t,\d t])$ can be identified with the DGLA of polynomial differential forms on the interval with values in $\CE(\g,\h)$, that is, one has a natural identification $\CE(\g,\h[t,\d t])\cong (\CE(\g,\h))[t,\d t]$. %since
%        \begin{align*}
%            \underline{\Hom}^{a,d}(\g,\h[d])
%            &\cong~
%            \Hom_{\gVect}\Big(\wedge^a \g,(\h\otimes \Omega^\bullet(\Delta^1))[d]\Big) 
%            \\
%            &\cong~
%            \Big(\Hom_{\gVect}(\wedge^a \g,\h[d])\Big)\otimes \Omega^\bullet(\Delta^1)
%            \\
%            &\cong~
%            \Big(\Hom_{\gVect}(\wedge^a \g,\h[d])\Big)[t,\d t] 
%        \end{align*}
 %   \end{remark}

    %
%    \antonio{intermezzo categoriale.
%        \\
%        - DGLA come 2-categoria
%        \\
%        - Omotopie come 2-morfismi.
%    }
% Ripetere Gauge equivalence qui. Non c'è più nelle conventions.
%, meaning that two Maurer-Cartan elements $x,y\in\MC(\g)$ are gauge equivalent if and only if there exists $a\in\g_0$ such that $y=e^a\star x$.
%    \begin{remark}
 %Besides the notion of gauge equivalence recalled in the introduction, there is another instance of
 One has a natural notion of homotopy  equivalence between Maurer-Cartan elements for a DGLA $\mathfrak{k}$:
        two Maurer-Cartan elements $\alpha_0,\alpha_1 \in \MC(\mathfrak{k})$ are said to be homotopy equivalent if there exists a Maurer–Cartan element 
        $$
        \alpha(t,dt)\in \mathfrak{k}\otimes \Omega^\bullet(\Delta^1)
        $$
        such that $\alpha(0)=\alpha_0$ and $\alpha(1)=\alpha_1$.
       One tautologically sees that that two $L_\infty$-morphisms $f,g:\g \to \h$ between the DGLAs $\g$ and $\h$ are homotopy equivalent as $L_\infty$-morphisms if and only if $f$ and $g$ are  homotopy equivalent as Maurer-Cartan elements in $\MC(\CE(\g,\h))$.
    \end{remark}      

%    \domenico{From \href{https://mathoverflow.net/questions/48552/homotopy-between-solutions-of-maurer-cartan-equation}{mathoverflow}:
%        \\
%        At first sight, it is not apparent that this defines an equivalence relation, but indeed it does.  
%        In fact, much more is true: the homotopy relation just described is only the tip of the iceberg.  
%        To see this, one rewrites $\g[t,dt]$ as $\g\otimes\Omega^1$, where $\Omega^1$ denotes the differential graded commutative algebra of polynomial differential forms on the (algebraic) $1$-simplex.  
%        This is naturally the first stage of a simplicial DGLA 
%        \[  \g\otimes\Omega^\bullet, \]
%        and taking Maurer–Cartan elements in it produces a simplicial set 
%        \[ \MC(\g\otimes\Omega^\bullet). \]
%        This simplicial set turns out to be a Kan complex, and the fact that the homotopy relation between solutions of the Maurer–Cartan equation in $\g$ is an equivalence relation corresponds precisely to the \emph{horn-filling property} of this Kan complex.  
%
%        A good reference for these ideas is Getzler’s paper \cite{Getzler2009}.
%        In formal deformation theory, one often obtains a nilpotent DGLA from a general one by tensoring it with the maximal ideal $m_A$ of a local Artin algebra $A$.
%}
%
A crucial result in infinitesimal deformation theory via DGLAs and $L_\infty$-algebras is the fact that gauge equivalence and homotopy equivalence of Maurer-Cartan elements in a DGLA coincide  (see, e.g., \cite{Manetti2022}). In particular, it is immediate to see that if the Maurer-Cartan elements $f,g\in MC(\mathfrak{k})$ are gauge equivalent via the gauge equivalence $e^x$, i.e., if $e^x\star f=g$, then the Maurer-Cartan element $e^{tx}\star f\in MC(\mathfrak{k}\otimes \Omega^\bullet(\Delta^1))$ is a homotopy between $f$ and $g$.
Summing up, one gets the following.
\begin{proposition}[{\cite[\S 3]{Fiorenza2009}}]
    \label{prop:Linfty-homotopy-gauge-equivalence}
    Let $f,g:{\g} \to {\h}$ be two $L_\infty$-morphisms of DGLAs. Then $f$ and $g$ are homotopic $L_\infty$-morphisms if and only if 
    $f$ and $g$ are gauge equivalent as elements in $\MC(\CE(\g,\h))$. 
    \end{proposition}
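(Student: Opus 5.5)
The statement to prove is Proposition~\ref{prop:Linfty-homotopy-gauge-equivalence}: $L_\infty$-morphisms $f,g\colon\g\to\h$ are homotopic if and only if they are gauge equivalent in $\MC(\CE(\g,\h))$. The plan is to transport the whole question into the single DGLA $\mathfrak{k}:=\CE(\g,\h)$ and then invoke the classical equivalence between homotopy and gauge equivalence of Maurer--Cartan elements.

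\textbf{Step 1: reduce to Maurer--Cartan elements of $\mathfrak{k}$.} By \autoref{lem:Linfty-morphisms-as-MC-elements}, $f$ and $g$ correspond to elements of $\MC(\mathfrak{k})$. By \autoref{rmk:Linfty-homotopies-as-MC-elements}, an $L_\infty$-homotopy $H\colon\g\to\h\otimes\Omega^\bullet(\Delta^1)$ is the same as an element of $\MC(\mathfrak{k}[t,\d t])$, using the identification $\CE(\g,\h[t,\d t])\cong\mathfrak{k}[t,\d t]$. I will check that this identification is compatible with the bracket and both differentials; it is, since $\Omega^\bullet(\Delta^1)$ is graded commutative and the formulas \eqref{eq:bracket-Hom-DGLAs}, \eqref{eq:vertical-differential-Hom-DGLAs} and \eqref{eq:horizontal-differential-Hom-DGLAs} are linear over it. Under this identification, $\mathrm{ev}_0$ and $\mathrm{ev}_1$ act componentwise. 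Hence $f$ and $g$ are homotopic as $L_\infty$-morphisms exactly when they are homotopy equivalent as Maurer--Cartan elements of $\mathfrak{k}$.

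\textbf{Step 2: gauge implies homotopy.} Suppose $g=e^{x}\star f$ with $x\in\mathfrak{k}^0$. Put $\alpha(t)=e^{tx}\star f$ and $\alpha(t,\d t):=\alpha(t)+\d t\,x$. I will verify that $\alpha(t,\d t)$ is Maurer--Cartan in $\mathfrak{k}[t,\d t]$. The $\d t$-free part of the equation is the Maurer--Cartan equation for $\alpha(t)$, which holds because gauge transformations preserve $\MC$. The $\d t$-part is the flow equation
\[
\dot\alpha(t)=[x,\alpha(t)]-\d_{\mathfrak{k}}x ,
\]
which follows by differentiating the series \eqref{eq:MC-Gauge-Transformation}. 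The endpoints are $\alpha(0)=f$ and $\alpha(1)=g$. Nilpotency, which is assumed tacitly throughout the paper, makes the series polynomial in $t$.

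\textbf{Step 3: homotopy implies gauge.} This is the main obstacle. Given a Maurer--Cartan element $\alpha(t)+\d t\,\beta(t)$, the $\d t$-component gives $\dot\alpha=[\beta(t),\alpha]-\d_{\mathfrak{k}}\beta(t)$. This is the equation of a time-dependent gauge flow with generator $\beta(t)$. I will integrate it: solve $\dot{a}(t)$ in terms of $\beta(t)$, so that $e^{a(t)}$ is the time-ordered exponential of $\beta$, with $a(0)=0$. By uniqueness of solutions of this ODE (argued degree by degree in the nilpotency filtration), $\alpha(t)=e^{a(t)}\star\alpha(0)$. Setting $t=1$ gives $g=e^{a(1)}\star f$.

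This last step is the standard argument of \cite[\S 6.3]{Manetti2022}. In the write-up I would cite it rather than reprove it, since it applies verbatim to the nilpotent DGLA $\mathfrak{k}$.
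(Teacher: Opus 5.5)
Your proposal follows essentially the same route as the paper: identify $L_\infty$-homotopies with Maurer--Cartan elements of $\CE(\g,\h)[t,\d t]$, exhibit $e^{tx}\star f$ as the homotopy for the easy direction, and cite \cite{Manetti2022} for the converse, which you additionally sketch via the time-dependent gauge flow. One small sign slip: with the standard Koszul conventions, the $\d t$-component of the Maurer--Cartan equation for $\alpha(t)+\d t\,\beta(t)$ is $\dot\alpha=\d_{\mathfrak{k}}\beta+[\alpha,\beta]=-([\beta,\alpha]-\d_{\mathfrak{k}}\beta)$, so the path should be $\alpha(t)-\d t\,x$ rather than $\alpha(t)+\d t\,x$ (this is exactly what computing $e^{tx}\star f$ inside $\CE(\g,\h)[t,\d t]$ gives, since $\d(tx)$ contributes $\d t\,x$).
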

%begin{proof}
 %   
%
%    \antonio{chiedere a Domenico se e come scrivere la dimostrazione}
%    \domenico{
%    Dovrebbe essere facile provare che se $\exists H \in \MC(\g[t,dt])$ con $H|_{t=0}=f$ e $H|_{t=1}=g$ allora $\exists x \in \g_0$ tale che $e^x \star f =g$.
%
%    definisco $x \mapsto H:= e^{t x} \star f$. Poichè $f \in \MC(\g)\subseteq \MC(\g[t,dt])$ allora $tx \in \Big( \g[t,dt] \Big)^0$. dunque $e^{t x} \star f \in \MC(\g[t,dt])$.
%
%    Viceversa più spinoso, serve $x = \int H$...
%    }
%\end{proof}
%
Expanding the definition of gauge equivalence, the above Proposition states that two $L_\infty$-morphisms $f,g:\g\to \h$ are homotopic  if and only if there exists an element $\mathcal{h} \in \CE(\g,\h)^0$ such that %$\Phi$ and $\Psi$ are related by the gauge transformation generated by $\mathcal{h}$, (see equation \eqref{eq:MC-Gauge-Transformation}), i.e.
\begin{align}
        g=&%~ 
        %e^{{\mathcal{h}}} \star \Psi 
        %\notag
        %\\
        %=&
        ~
         f+ \sum_{n\geq 0}\frac{[\mathcal{h}, \blank]_{\mathfrak{ce}}^{n}}{(n+1)!}([\mathcal{h}, f]_{\mathfrak{ce}} - \d_{\mathfrak{ce}}(\mathcal{h}))
        ~.
        \label{eq:MC-Gauge-Transformation-CE}
\end{align}
%    Unpacking the definitions, one finds that $\mathcal{h}$ consists of a collection of multilinear maps, for $k\geq 1$,
%    \begin{displaymath}
%        \mathcal{h}_k : \wedge^k \g \to \h[-k]~,
%    \end{displaymath}
%    satisfying the following compatibility conditions for every $k\geq 1$:
%    \begin{displaymath}
%        \Phi_k - \Psi_k =~
%        \d_\h \circ \mathcal{h}_k
%        +
%        (-1)^{k} \mathcal{h}_k \circ \d_\g
%        +
%        \sum_{i+j=k}
%        (-1)^{i j} [\cdot,\cdot]_\h \circ (\Psi_i \otimes \mathcal{h}_j) \circ P_{i,j}
%        +
%        \sum_{i+j=k}
%        (-1)^{i j} [\cdot,\cdot]_\h \circ (\mathcal{h}_i \otimes \Phi_j) \circ P_{i,j}~.
%    \end{displaymath}
%    \antonio{Da ricontrollare!!!}
%
 \subsection{Homotopy fibers of \texorpdfstring{$L_\infty$}{L-infinity}-morphisms between DGLAs}
   % \antonio{Secondo me serve dire qualcosa sulle fibre omotopiche per rendere il paper più accessibile!}
Defining the notion of a homotopy between two $L_\infty$-morphisms is the first step in defining a whole $\infty$-category of $L_\infty$-algebras, see, e.g., \cite{LurieFormalModuli,Pridham2010}. In particular, given an $L_\infty$-morphism $f:\g \to \h$ between DGLAs, it makes sense to consider its homotopy fiber $\hoFib(f)$, i.e, the homotopy pull-back 
    \begin{displaymath}
        \begin{tikzcd}[column sep=large]
            \hoFib(f) \ar[r] \ar[d] \ar[dr,very near start,phantom,"\lrcorner_{\mathcal{h}}"] & 0 \ar[d] \\
            \g \ar[r, "f"'] & \h %\ar[ur,"\mathcal{h}", Rightarrow] & \h
        \end{tikzcd}
    \end{displaymath}
    where $0$ is the trivial DGLA.
    By definition,  this means that one has an $L_\infty$-morphism $\varphi$ and an $L_\infty$-homotopy $\mathcal{h}$ such that $\hoFib(f)$ fits into a homotopy commutative diagram of the form
    \begin{displaymath} 
        \begin{tikzcd}[column sep=large]
            \hoFib(f) \ar[r] \ar[d,"\varphi"'] & 0 \ar[d] \\
            \g \ar[r, "f"'] \ar[ur, Rightarrow, "\mathcal{h}"] & \h
        \end{tikzcd}
    \end{displaymath}
 and  $\hoFib(f)$ is universal with respect to this property:
 %   \\
   % Universality means that for any other $L_\infty$-algebra $\mathfrak{k}$ fitting into a similar 
   for any homotopy commutative diagram of $L_\infty$-algebras of the form
    \begin{displaymath} 
        \begin{tikzcd}[column sep=large]
            \mathfrak{k} \ar[r] \ar[d,"\varphi_{\mathfrak{k}}"'] & 0 \ar[d] \\
            \g \ar[r, "f"'] \ar[ur, Rightarrow, "\mathcal{h}_{\mathfrak{k}}"'] & \h
        \end{tikzcd}
    \end{displaymath}
    one has a unique (up to homotopy) factorization %$L_\infty$-morphism $\Xi:\mathfrak{k} \to \hoFib(\Phi)$ making the following diagram 
    into a homotopy commutative diagram of $L_\infty$-algebras of the form
    \begin{displaymath} 
        \begin{tikzcd}[column sep=large]
            \mathfrak{k}  \ar[dr,"\Xi"] \ar[ddr,bend right=30, "\varphi_{\mathfrak{k}}"',""{name=D}] \ar[rrd,bend left=30,""{name=U,below}] & &
            \\
            & \hoFib(f) \ar[r] \ar[d,"\varphi"'] \arrow[Rightarrow, to=U, ""] \ar[Rightarrow, from=D]
            & 0 \ar[d]
            \\
            & \g \ar[r, "f"'] \ar[ur, Rightarrow, "\mathcal{h}"] & \h
        \end{tikzcd}
    \end{displaymath}
Notice that, being characterized by a universal property in a $\infty$-category, the $L_\infty$-algebra $\hoFib(f)$ is unique up to homotopy equivalence. A representative for the homotopy class of $\hoFib(f)$ will be called a \emph{model} for the homotopy fiber of $f$.

%    \antonio{Todo:
%    \\- la questione dei modelli e/o  esistenza e unicità a meno di omotopia
%    }
%
%    Being the above homotopy fibers a particular instance of hotopy limits (pull-backs), they are unique up to homotopy equivalence, i.e., if $\hoFib(\Phi)$ and $\hoFib'(\Phi)$ are two homotopy fibers of the same $L_\infty$-morphism $\Phi:\g \to \h$, then there exists an $L_\infty$-quasi-isomorphism between them.
%    \todo{Check!}
%    A representative of such a homotopy class is often referred to as \emph{a model} for the homotopy fiber of $\Phi$.
% 
%    \antonio{riferimenti finali al libro di Manetti}
%

%####################################################################%
% BIBLIOGRAPHY
%####################################################################%
\printbibliography
%####################################################################%

\end{document}